\documentclass[11pt]{amsart}
\usepackage{amsmath,amssymb,latexsym,amsthm,enumerate}
\usepackage{hyperref} %To be able to click on the links in the paper
\usepackage{verbatim} 

\addtolength{\hoffset}{-20mm}
\textwidth=170mm
\topmargin=-8mm
\addtolength{\textheight}{35mm}
\sloppy
\linespread{1.3}

\theoremstyle{plain}
\newtheorem{theorem}{Theorem}[section]
\newtheorem{proposition}[theorem]{Proposition}
\newtheorem{lemma}[theorem]{Lemma}
\newtheorem{corollary}[theorem]{Corollary}
\theoremstyle{definition}

\newtheorem*{remark}{Remark}
\newtheorem*{remarks}{Remarks}
\theoremstyle{remark}

\newcounter{numpar}[section]

%Ex.1: \newpar{ Separating times.} (note space!), Ex.2: \newpar{}. Then start a new line

\numberwithin{equation}{section}

 %A mark for returning later

\newcommand*{\dd}{\mathrm{d}}     %For dx, dy, etc.
\newcommand*{\te}{\mathrm{e}}

\newcommand*{\cC}{\mathcal C}

\newcommand*{\cE}{\mathcal E}
\newcommand*{\cF}{\mathcal F}
\newcommand*{\cG}{\mathcal G}
\newcommand*{\cH}{\mathcal H}
\newcommand*{\cK}{\mathcal K}
\newcommand*{\cL}{\mathcal L}
\newcommand*{\cV}{\mathcal V}

\newcommand*{\bbE}{\mathbb E}
\newcommand*{\bbN}{\mathbb N}

\newcommand*{\bbR}{\mathbb R}
\newcommand*{\bbZ}{\mathbb Z}

\newcommand*{\EE}{\mathsf E}
\newcommand*{\PP}{\mathsf P}

\DeclareMathOperator{\sgn}{sgn}

%========================================================================================
\begin{document}
\title[Coupling and tracking of regime-switching martingales]{Coupling and tracking of regime-switching martingales}

\author{Saul D. Jacka}
\address{Department of Statistics, University of Warwick, UK}
\email{s.d.jacka@warwick.ac.uk}

\author{Aleksandar Mijatovi\'{c}}
\address{Department of Mathematics, Imperial College London, UK}
\email{a.mijatovic@imperial.ac.uk}

\thanks{We would like to thank Peter Bank, David Hobson and Dimitry Kramkov for 
useful comments. We are grateful to two anonymous referees whose suggestions and comments 
greatly improved the paper.}

\keywords{generalised mirror and synchronous coupling
of Brownian motion, 
coupling time and tracking error
of regime-switching martingales, 
Bellman principle, 
continuous-time Markov chains, stochastic integrals} 

\subjclass[2010]{60H05, 60J27, 93E20}

\begin{abstract}
This paper describes two explicit couplings of standard Brownian motions
$B$
and
$V$,
which naturally extend the mirror coupling and the synchronous coupling
and respectively maximise and minimise
(uniformly over all time horizons)
the coupling time and the tracking error 
of two regime-switching martingales. 
The generalised mirror coupling minimises the 
coupling time of the two martingales while
simultaneously maximising the tracking error
for all time horizons.  The generalised synchronous 
coupling maximises the coupling time and 
minimises the tracking error over all co-adapted
couplings. The proofs are based on the Bellman principle.
We give counterexamples to the conjectured optimality of the two couplings
amongst a wider classes of stochastic integrals.
\end{abstract}

\maketitle

%========================================================================================
\section{Introduction}
\label{sec:i}

Let
$(\Omega, (\cF_t)_{t\geq0}, \cF, \PP)$
be a filtered probability space that supports 
a standard
$(\cF_t)$-Brownian motion
$B=(B_t)_{t\geq0}$
and let
$$
\cV:=\left\{V=(V_t)_{t\geq0}\,:\,V\text{ is an $(\cF_t)$-Brownian motion with }\,V_0=0 \right\}
$$
be the set of all $(\cF_t)$-Brownian motions on this probability space.
It is well-known that for any time horizon 
$T>0$
the Brownian motion in
$\cV$
which minimises the probability that 
the processes
$X=x+B$
and
$Y(V)=y+V$
couple
after time 
$T$
(for any starting points 
$x,y\in\bbR$),
i.e. 
the Brownian motion that solves the problem
$$
\text{minimise}\quad\PP\left[\tau_0(X-Y(V))>T\right]\qquad \text{over $V\in\cV$,}
$$
where
$\tau_0(X-Y(V)):=\inf\{t\geq0:X_t=Y_t(V)\}$,
is given by the \textit{mirror coupling}
$V=-B$
(see e.g.~\cite{HsuSturm:03}).
Furthermore it is easy to see that the Brownian motion
which minimises the tracking error of
$Y(V)$
with respect to the target
$X$
at time 
$T$,
i.e. solves 
$$\text{minimise}\quad\EE\left[\left(X_T-Y_T(V)\right)^2\right]\qquad \text{over $V\in\cV$,}$$
is given by the 
\textit{synchronous coupling}
$V=B$.
This paper investigates the following generalisations of these questions.

\subsection{Problems}
\label{subsec:Problem}
Let
$Z=(Z_t)_{t\geq0}$
be an $(\cF_t)$-Feller process,
i.e. a Feller process 
on our probability space,
which is 
$(\cF_t)$-Markov.
Let
the state space
$\bbE$
of 
$Z$
be a subset 
of a Euclidean space
$\bbR^d$
for some
$d\in\bbN$.
For real Borel measurable functions
$\sigma_i:\bbE\to\bbR$,
$i=1,2$,
define the stochastic integrals 
$X=(X_t)_{t\geq0}$
and
$Y(V)=(Y_t(V))_{t\geq0}$
by
\begin{eqnarray}
\label{eq:Proc_Def_X}
X_t  :=   x+\int_0^t\sigma_1(Z_s)\,\dd B_s & \text{ and } &
Y_t(V)  :=  y+\int_0^t\sigma_2(Z_s)\,\dd V_s,
\end{eqnarray}
where
$x,y\in\bbR$
and
$V\in\cV$.
Throughout the paper we assume that 
for each starting point the process
$Z$
is a semimartingale (in particular, it is non-explosive and
has c\`adl\`ag paths) 
%without instantaneous states)
and
\begin{eqnarray}
\label{eq:Mart_Integrab_Quad_VAr}
\EE\int_0^t\sigma_i^2(Z_s)\dd s<\infty\qquad\text{for all}\qquad
t>0,\,
i=1,2.
\end{eqnarray}
This implies that the processes
$X$
and
$Y(V)$
in~\eqref{eq:Proc_Def_X}
are well-defined 
true martingales 
(e.g.  see~\cite[Cor~IV.1.25]{RevuzYor:99}).
In the case the state space 
$\bbE$
of 
$Z$
is embedded in a multidimensional space,
a natural choice for the volatility functions 
$\sigma_1$
and
$\sigma_2$
are the projections 
resulting in 
$\sigma_1(Z)$
and
$\sigma_2(Z)$
being coordinate processes of
$Z$
in 
$\bbR^d$.
Furthermore, to avoid degenerate situations, 
we assume throughout the paper that 
$(|\sigma_1|+|\sigma_2|)(z)>0$
for
all
$z\in\bbE$.
The class of stochastic integrals in~\eqref{eq:Proc_Def_X},
with the integrand  
$Z$
typically a jump-diffusion (i.e. a Feller process),
arises frequently and is of interest 
in the theory and practice of mathematical
finance in the guise of stochastic volatility models
(see e.g.~\cite{Gatheral}).

We are interested in the ``distance'' between the 
two processes 
$X$
and
$Y(V)$
for any
$V\in\cV$.
In other words we seek to understand how large and small the following  quantities 
can be
\begin{equation}
\label{eq:Distance}
\EE\left[\phi(X_T-Y_T(V))\right]
\qquad\text{and}\qquad
\PP\left[\tau_0(X-Y(V))>T\right],
\end{equation}
for
$T>0$
a fixed time horizon,
\begin{equation}
\label{eq:Ass_phi}
\phi:\bbR\to\bbR %\in\cC^2(\bbR) 
\text{ a convex function 
%bounded from below  
%that satisfies
%the inequality 
satisfying
$|\phi(x)|\leq a |x|^p+b$
for some 
$a,b>0$,
$p\geq 2$
and 
$\forall x\in\bbR$,}
\end{equation}
and 
$\tau_0(X-Y(V)):=\inf\{t\geq0:X_t=Y_t(V)\}$
the \textit{coupling time} of the processes
$X$
and
$Y(V)$.
Since 
$V$
is an arbitrary
$(\cF_t)$-Brownian motion,
the law of the difference
$X-Y(V)$
is in general not easy to describe. Therefore we cannot expect 
to be able to identify the quantities in~\eqref{eq:Distance}
explicitly. Our goal is to establish sharp upper and lower bounds
for the expectations in~\eqref{eq:Distance}, 
which hold for 
any choice of Brownian motion
$V\in\cV$
\textit{and} are based on a natural generalisations
of the mirror and synchronous couplings of Brownian motions 
described in Section~\ref{subsection:Solution}.
%in~\eqref{eq:Def_VI_VII}.
More precisely, we are looking for Brownian motions 
$V^{M}, V^{S}\in\cV$
%and
%$V^{Cu}, V^{Cd}\in\cV$
such that the following inequalities 
hold 
for all 
$V\in\cV$:
\begin{eqnarray*}
&\hspace{-12mm}
\text{\textbf{(T)}}\hspace{10mm}  
\EE\left[\phi(X_T-Y_T(V^{S}))\right] \hspace{3mm}
\leq  
\hspace{3mm} \EE\left[\phi(X_T-Y_T(V))\right] \hspace{3mm}
\leq \hspace{3mm}\EE\Big[\phi(X_T-Y_T(V^{M}))\Big],&\\
&\text{\textbf{(C)}}\hspace{5mm} 
\PP\left[\tau_0(X-Y(V^{M}))>T\right] \hspace{3mm}\leq \hspace{3mm}
\PP\left[\tau_0(X-Y(V))>T\right] \hspace{3mm} \leq \hspace{3mm}
\PP\Big[\tau_0(X-Y(V^{S}))>T\Big],& 
\end{eqnarray*}
where 
the generalised mirror
(resp. synchronous)
coupling holds for
$B$
and
$V^{M}$
(resp. 
$V^{S}$).
%Similarly in the case of coupling, we seek 
%Brownian motions 
%$V^{Cu}, V^{Cd}\in\cV$
%such that 
%\begin{eqnarray*}
%\text{\textbf{(C)}}\hspace{5mm} 
%\PP\left[\tau_0(X-Y(V^{Cd}))>T\right] \hspace{3mm}\leq \hspace{3mm}
%\PP\left[\tau_0(X-Y(V))>T\right] \hspace{3mm} \leq \hspace{3mm}
%\PP\Big[\tau_0(X-Y(V^{Cu}))>T\Big]
%\end{eqnarray*}
%for all 
%$V\in\cV$.

In Problems~\textbf{(T)} and~\textbf{(C)},
the goal is not merely to prove the existence in an abstract sense of the
integrators
$V^{M}, V^{S}\in\cV$,
\textit{but} primarily to understand for which classes of  
$(\cF_t)$-Feller processes 
$Z$
are the generalised mirror and synchronous couplings of Brownian motions,
described in Section~\ref{subsection:Solution},  
extremal in the inequalities of Problems~\textbf{(T)} and~\textbf{(C)}.
In particular, for the volatility processes 
$Z$ with the property that the generalised mirror and synchronous couplings 
satisfy the inequalities above for all 
Brownian motions 
$V\in\cV$,
the following holds:
maximising the coupling time 
of the stochastic integrals
minimises the ``convex distance'' 
of the two processes
and vice versa uniformly over all time horizons
$T>0$. 

\subsection{Results}
\label{subsection:Solution}
In the setting of processes~\eqref{eq:Proc_Def_X},
it is natural to define generalised synchronous 
and mirror couplings of Brownian motions in the following way.
Let the functions
$\hat c_I,\hat c_{II}:\bbE\to\bbR$
be given by the formulae
$$
\hat c_I(z):=\sgn(\sigma_1(z)\sigma_2(z)),
\qquad 
\hat c_{II}(z):=-\sgn(\sigma_1(z)\sigma_2(z))
$$
for any
$z\in\bbE$,
and define the 
Brownian motions 
$V^I=(V^I_t)_{t\geq0}$
and
$V^{II}=(V^{II}_t)_{t\geq0}$
in
$\cV$
by
\begin{equation}
\label{eq:Def_VI_VII}
V^I_t :=\int_0^t \hat c_I(Z_s)\,\dd B_s
\qquad\text{and}\qquad
V^{II}_t :=\int_0^t \hat c_{II}(Z_s)\,\dd B_s. % = - V^I_t.
\end{equation}
Note that 
$\hat c_{II}=-\hat c_I$
and hence 
$V^{II}=-V^I$.
It is clear from~\eqref{eq:Def_VI_VII}
that 
$B$
and
$V^I$
generalise the synchronous coupling of Brownian motions,
while the pair 
$B$
and
$V^{II}$
extends the notion of the mirror coupling.
A natural conjecture, based on the 
case where 
$X$
and
$Y(V)$
are Brownian motions,
goes as follows.

\begin{flushleft}
\textit{Conjecture.} For any $(\cF_t)$-Feller process
$Z$
and 
$V\in\cV$,
the inequalities in~\textbf{(T)}
and~\textbf{(C)}
are satisfied 
by
$V^{S}=V^{I}$
and
$V^{M}=V^{II}=-V^{I}$.
\end{flushleft}

\subsubsection{The conjecture fails in the class of general $(\cF_t)$-Feller processes}
%and a non-symmetric cost function $\phi$}
\label{subsubsec:GEneral_Fails}
Let the Feller process
$Z$,
with state space
$\bbE:=(0,\infty)$,
be defined as
\begin{equation}
\label{eq:Z_dependent_BM_Counter}
Z_t := z_0 M_t,\qquad \text{where $M_t:=\exp(B_t-t/2)$ and $z_0>0$,}
\end{equation}
and the volatility functions 
$\sigma_1, \sigma_2:\bbE\to \bbR$
given by
$\sigma_i(z):=-iz$
for any $z\in\bbE$ and $i=1,2$.
The corresponding candidate extremal Brownian motions
$V^{I}$
and
$V^{II}$,
defined in~\eqref{eq:Def_VI_VII},
are in this case given by
the classical synchronous 
$V^I=B$
and mirror 
$V^{II}=-B$
couplings.
The fact that
$
M_t = 1+\int_0^tM_s\dd B_s
$
yields
$\int_0^t \sigma_i(Z_s)\dd B_s=-iz_0(M_t-1)$,
for
$i=1,2$,
which in particular implies
the following for all
$t\geq0$:
\begin{eqnarray}
\label{eq:Explicit_Expressions_Intro}
X_t-Y_t(V^I) = x-y + z_0(M_t-1)
\quad\text{and}\quad
X_t-Y_t(V^{II}) = x-y -3z_0(M_t-1).
\end{eqnarray}

Fix a time horizon
$T>0$
and note that,
since~\eqref{eq:Explicit_Expressions_Intro} implies the supports of the random variables 
$X_T-Y_T(V^{I})$
and
$X_T-Y_T(V^{II})$
are given by
$$
\mathrm{supp}\left(X_T-Y_T(V^{I})\right) = (x-y-z_0,\infty)
\quad\text{and}\quad
\mathrm{supp}\left(X_T-Y_T(V^{II})\right) = (-\infty,x-y+3z_0),
$$
any non-negative non-zero convex function 
$\phi:\bbR\to\bbR$
that satisfies the assumptions in~\eqref{eq:Ass_phi},
%Section~\ref{subsec:Problem},
with support
(i.e. the closure of $\phi^{-1}(0,\infty)$)
contained in the half-line  
$(x-y+3z_0,\infty)$,
clearly yields
$$
0=\EE\left[\phi\left(X_T-Y_T(V^{II})\right)\right]  <
\EE\left[\phi\left(X_T-Y_T(V^{I})\right)\right]. 
$$
Hence the tracking part of the conjecture fails
for 
$Z=z_0M$.
%However, it is easy to see that
%for the quadratic cost function 
%we have
%$\EE\left[\left(X_T-Y_T(V^{I})\right)^2\right] \leq
%\EE\left[\left(X_T-Y_T(V^{II})\right)^2\right]$ 
%as suggested in Section~\ref{subsec:Non_Markov_Trackin}.

Assume that the starting points in~\eqref{eq:Proc_Def_X}
satisfy
$x-y<-3z_0$
and define the stopping time 
$\tau:=\inf\{t\geq0:B_t-t/2=\log(1-(x-y)/z_0)\}$.
Note that the representations 
in~\eqref{eq:Explicit_Expressions_Intro}
imply 
$\PP\left[\tau_0(X-Y(V^{II}))=\infty\right]=1$
and
$\PP\left[\tau_0(X-Y(V^{I}))>T\right] =\PP\left[\tau>T\right]<1$
for any time horizon
$T>0$.
Therefore the coupling part of the conjecture also fails:
$$
\PP\left[\tau_0(X-Y(V^{I}))>T\right] <
\PP\left[\tau_0(X-Y(V^{II}))>T\right]=1.
$$
%which implies that Proposition~\ref{prop:Time_Determ}
%does not hold in general beyond deterministic volatility processes. 

\subsubsection{The generalised mirror and synchronous couplings are optimal if $Z$ is a continuous-time Markov chain}
\label{subsubsec:adapted_CTMC}
Unless otherwise stated, in the rest of the paper 
$Z$
denotes an %continuous-time 
$(\cF_t)$-Markov semimartingale with a countable state space. 
More precisely, we assume that 
\begin{equation}
\label{eq:Z_Chain_Assumption}
\text{$Z$ is a non-explosive, irreducible, c\`adl\`ag $(\cF_t)$-Feller process on a discrete space
$\bbE\subset\bbR^d$.}
%and c\'adl\'ag trajectories.}
%$Z$
%that satisfies the assumptions in Section~\ref{subsec:Problem}
\end{equation}
Assumption~\eqref{eq:Z_Chain_Assumption} makes 
$\bbE$
a countable set
(i.e. the cardinality of $\bbE$
is at most that of $\bbN$)
and
$Z$
a continuous-time 
$(\cF_t)$-Markov chain on $\bbE$.
The following assumptions on the semigroup
$P$
and the 
$Q$-matrix 
$Q$
of the chain 
$Z$
ensure the finiteness of the expectations in~\textbf{(T)} 
%are finite 
(see Section~\ref{sec:Tracking}) 
and the regularity of the law of the coupling time 
(see Section~\ref{sec:Coupling}) 
respectively:\footnote{It is not hard to show that
neither of the conditions~\eqref{eq:Int_Assump} 
and~\eqref{eq:Coupling_Int_Assump}
implies the other, see Appendix~\ref{app:Examples}.}
\begin{eqnarray}
\label{eq:Int_Assump}
\forall z\in\bbE:\quad (P_T(|\sigma_1|^p+|\sigma_2|^p))(z)<\infty,\\
\forall z\in\bbE:\quad (Q(|\sigma_1|^2+|\sigma_2|^2))(z)<\infty.
\label{eq:Coupling_Int_Assump}
\end{eqnarray}
%We define the action of 
%$Q$
%on the space of bounded functions on
%$\bbE$
%in the standard way:
%for a bounded
%$g:\bbE\to\bbR$,
%let
%\begin{eqnarray*}
%%\label{eq:Q_Action}
%Qg:\bbE\to\bbR\qquad\text{be given by the formula}\quad
%(Qg)(z):=\sum_{z'\in\bbE}Q(z,z')g(z'),
%\end{eqnarray*}
%since the series %in~\eqref{eq:Q_Action}
%converges absolutely for every
%$z\in\bbE$.
%

%We can now state Theorem~\ref{thm:tracking}, 
%which describes the extremal couplings of 
%pairs of standard Brownian motions
%$(B,V^{Tu})$
%and
%$(B,V^{Td})$
%that feature in the inequalities of Problem~\textbf{(T)}.

\begin{theorem}
\label{thm:tracking}
Let a 
%$(\cF_t)$-adapted 
Markov chain 
$Z$
%be as above. 
%If 
%$Z$
%also 
satisfy~\eqref{eq:Mart_Integrab_Quad_VAr},~\eqref{eq:Z_Chain_Assumption} and~\eqref{eq:Int_Assump}
and $\phi$
be as in~\eqref{eq:Ass_phi}.
Then 
\begin{eqnarray*}
\EE\left[\phi(X_T-Y_T(V^{I}))\right] \hspace{3mm}
\leq  
\hspace{3mm} \EE\left[\phi(X_T-Y_T(V))\right] \hspace{3mm}
\leq \hspace{3mm}\EE\left[\phi(X_T-Y_T(V^{II}))\right]
\quad \text{for any $V\in\cV$.}
\end{eqnarray*}
\end{theorem}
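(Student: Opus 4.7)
The plan is a verification argument based on the Bellman principle: construct candidate value functions from $V^I$ and $V^{II}$ and show they enjoy a supermartingale / submartingale property along any admissible $V$. I describe the upper inequality; the lower one is symmetric.

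\emph{Coupling representation.} Any $V \in \cV$ admits, after a possible extension of the probability space, a Kunita--Watanabe-type decomposition $dV_s = c_s\,dB_s + \sqrt{1 - c_s^2}\,dW_s$, where $c$ is $(\cF_t)$-predictable with $|c_s| \leq 1$ and $W$ is an $(\cF_t)$-Brownian motion independent of $B$. The difference $D^V := X - Y(V)$ is then a continuous $(\cF_t)$-martingale whose quadratic variation density
\[
g(Z_s, c_s) := \sigma_1(Z_s)^2 + \sigma_2(Z_s)^2 - 2 c_s \sigma_1(Z_s)\sigma_2(Z_s)
\]
satisfies $(|\sigma_1(z)| - |\sigma_2(z)|)^2 \leq g(z, c) \leq (|\sigma_1(z)| + |\sigma_2(z)|)^2$ uniformly in $c \in [-1,1]$, these extrema being realised precisely at $c = \hat c_I(z)$ and $c = \hat c_{II}(z)$.

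\emph{Value function.} Define
\[
\bar u(t, z, d) := \EE\bigl[\phi\bigl(d + D^{V^{II}}_T - D^{V^{II}}_t\bigr) \bigm| Z_t = z\bigr],
\]
which has polynomial growth in $d$ by \eqref{eq:Mart_Integrab_Quad_VAr}, \eqref{eq:Int_Assump} and \eqref{eq:Ass_phi}. Two properties are crucial: (i) $d \mapsto \bar u(t,z,d)$ is convex (immediate from translation invariance and convexity of $\phi$), and (ii) by the Markov property of $Z$, $\bar u$ satisfies the backward equation
\[
\partial_t \bar u(t,z,d) + \tfrac12(|\sigma_1(z)|+|\sigma_2(z)|)^2\,\partial_{dd}\bar u(t,z,d) + (Q\bar u(t,\cdot,d))(z) = 0, \quad \bar u(T,z,d) = \phi(d),
\]
where $Q$ denotes the $Q$-matrix of $Z$.

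\emph{Supermartingale step.} Applying Itô's formula to $\bar u(t, Z_t, D^V_t)$ --- using continuity of $D^V$, the pure-jump nature of $Z$, and compensating the jumps of $\bar u(t, \cdot, D^V_t)$ by $Q\bar u$ --- the backward equation above yields
\[
\bar u(T, Z_T, D^V_T) - \bar u(0, Z_0, x-y) = \text{(loc.\ mart.)} + \tfrac12 \int_0^T \bigl[g(Z_s, c_s) - (|\sigma_1|+|\sigma_2|)^2(Z_s)\bigr]\,\partial_{dd}\bar u(s, Z_s, D^V_s)\,ds.
\]
The finite-variation term is nonpositive since $g(z, c) \leq (|\sigma_1(z)| + |\sigma_2(z)|)^2$ while $\partial_{dd}\bar u \geq 0$ by convexity, so $t \mapsto \bar u(t, Z_t, D^V_t)$ is a supermartingale. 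Taking expectations at $t = T$ gives $\EE[\phi(D^V_T)] \leq \bar u(0, Z_0, x-y) = \EE[\phi(D^{V^{II}}_T)]$. The lower inequality follows by the same argument with $\tilde u$ defined via $V^I$ and the bound $g(z, c) \geq (|\sigma_1(z)| - |\sigma_2(z)|)^2$, the drift then being nonnegative so that $\tilde u(\,\cdot\,,Z,D^V)$ is a submartingale.

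The main technical obstacle is regularity: $\phi$ is merely convex, so $\bar u$ need not be $C^{1,2}$ and Itô's formula cannot be applied directly. I would handle this by mollifying $\phi$ into smooth convex approximants $\phi_n$ preserving the growth bound~\eqref{eq:Ass_phi}, running the argument for each $\phi_n$, and passing to the limit by dominated convergence. Here assumption~\eqref{eq:Int_Assump} together with the Burkholder--Davis--Gundy inequality supplies the uniform $L^p$-control of $D^V_T$ over $V \in \cV$ needed both to justify the limits and to upgrade the local martingale in the Itô expansion to a true martingale.
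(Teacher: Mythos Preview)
Your outline is essentially the paper's proof: the Kunita--Watanabe decomposition (Lemma~\ref{lem:R^V_rep}), convexity of the candidate value function in the spatial variable, the Bellman sub/supermartingale argument via It\^o's formula, mollification of $\phi$, and BDG control of $\sup_{t\le T}|D^V_t|$ all appear there. The genuine gap is that you have not localized, and without localization the It\^o step is not rigorous.

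Concretely, your claim that compensating the jumps of $\bar u(t,Z_t,D^V_t)$ by $\int (Q\bar u)\,ds$ produces a true martingale is the content of Lemma~\ref{lem:Q_mart}, whose hypotheses require both that the function be bounded and that the generator satisfy $\sup_{z\in\bbE}(-Q(z,z))<\infty$. Neither holds here: $\bar u$ grows polynomially in $d$, and on a countable state space the jump rates need not be bounded. The paper resolves this by stopping the chain when it exits a finite set $U_n\cap\bbE$ (yielding a bounded generator $Q_n$) and stopping $R(V)$ at level $K$ (making the stopped value functions $\psi^{(I)}_{K,n},\psi^{(II)}_{K,n}$ bounded, with bounded first spatial derivative by~\eqref{eq:Firs_Der_Bound_I}); it proves the inequalities~\eqref{eq:Inequalities_K_n} for the doubly localized process $R^{K,n}(V)$ and then sends $n\to\infty$ and $K\to\infty$ by dominated convergence. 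Your appeal to BDG at the end addresses the continuous local-martingale parts and the final limit in $K$, but it does not touch the jump compensator, which is where the boundedness assumptions bite.

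A secondary point: for $\bar u(t,z,d)$ to be well defined as a function of $(t,z,d)$ and to satisfy the stated backward equation, you need $(D^{V^{II}},Z)$ to be jointly Markov. This does not follow from the $(\cF_t)$-Markov property of $Z$ directly; it uses the independence of $Z$ and any $(\cF_t)$-Brownian motion, which the paper proves separately in Lemma~\ref{lem:_Zand_W_indep} and invokes explicitly when defining the candidate value functions.
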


The integrability condition in~\eqref{eq:Int_Assump}
is not necessary for the solution of 
Problem~\textbf{(C)}. 

\begin{theorem}
\label{thm:coupling}
Let an
%$(\cF_t)$-adapted 
$(\cF_t)$-Markov chain
$Z$
satisfy~\eqref{eq:Mart_Integrab_Quad_VAr},~\eqref{eq:Z_Chain_Assumption} and~\eqref{eq:Coupling_Int_Assump}.
%~\eqref{eq:Mart_Integrab_Quad_VAr} and~\eqref{eq:Z_Chain_Assumption}. 
%the assumptions in 
%Section~\ref{subsec:Problem}.
Then 
\begin{eqnarray*}
\PP\left[\tau_0(X-Y(V^{II}))>T\right] \hspace{1mm}\leq \hspace{1mm}
\PP\left[\tau_0(X-Y(V))>T\right] \hspace{1mm} \leq \hspace{1mm}
\PP\left[\tau_0(X-Y(V^{I}))>T\right] \quad 
\text{for any $V\in\cV$.}
\end{eqnarray*}
\end{theorem}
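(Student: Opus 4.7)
The plan is to apply the Bellman principle to the two-dimensional state process $(D_t, Z_t)$, where $D_t := X_t - Y_t(V)$, and verify that the generalised mirror and synchronous couplings are the extremal Markov controls. For any $V\in\cV$, Kunita--Watanabe gives $\dd\la B,V\ra_s = c_s\,\dd s$ for some predictable $c_s\in[-1,1]$, hence
\[
\dd\la D\ra_s = \bigl(\sigma_1(Z_s)^2+\sigma_2(Z_s)^2-2c_s\sigma_1(Z_s)\sigma_2(Z_s)\bigr)\,\dd s =: a(c_s,Z_s)\,\dd s,
\]
and the quantity $a(c,z)$ sweeps out $[(|\sigma_1|-|\sigma_2|)^2(z),(|\sigma_1|+|\sigma_2|)^2(z)]$ as $c$ ranges over $[-1,1]$. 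The upper endpoint $a_M(z) := (|\sigma_1|+|\sigma_2|)^2(z)$ is attained precisely by the mirror choice $c_s=\hat c_{II}(Z_s)$, and the lower endpoint $a_S(z) := (|\sigma_1|-|\sigma_2|)^2(z)$ precisely by the synchronous choice $c_s=\hat c_I(Z_s)$. Heuristically, a larger running bracket forces $D$ to hit zero sooner.

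Next I introduce the candidate value functions
\[
F^M(T,d,z) := \PP_{d,z}[\tau_0(X-Y(V^{II}))>T],\qquad F^S(T,d,z) := \PP_{d,z}[\tau_0(X-Y(V^{I}))>T].
\]
Since $(D^{II},Z)$ and $(D^{I},Z)$ are time-homogeneous Markov, $F^M$ and $F^S$ solve the backward Kolmogorov system
\[
\partial_T F^{M/S} = \tfrac12 a_{M/S}(z)\,\partial_d^2 F^{M/S} + QF^{M/S}\quad (d\neq 0),
\]
with Dirichlet condition $F^{M/S}(T,0,z)=0$ and initial trace $F^{M/S}(0,d,z)=\1\{d\neq 0\}$, where $Q$ denotes the generator of $Z$. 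The crucial technical input is to prove that $d\mapsto F^{M/S}(T,d,z)$ is \emph{concave} on each of $(-\infty,0)$ and $(0,\infty)$. I would show this by conditioning on the path of $Z$ over $[0,T]$: Dambis--Dubins--Schwarz identifies $D^{II}$ (resp.\ $D^{I}$) with a Brownian motion time-changed by $A^M_T:=\int_0^T a_M(Z_s)\,\dd s$ (resp.\ $A^S_T$), so the conditional survival probability reduces to $d\mapsto 2\Phi(|d|/\sqrt{t})-1$, which is concave on each half-line; integrating over the path of $Z$ preserves concavity. Alternatively, one can argue by induction on the number of jumps of $Z$ in $[0,T]$, exploiting the fact that the Dirichlet heat semigroup preserves concavity on each half-line and that the chain jumps act as a linear mixing in $z$.

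Verification closes the argument via It\^o's formula. For arbitrary $V\in\cV$ with $D = X - Y(V)$, applying It\^o to $t\mapsto F^M(T-t,D_t,Z_t)$ on $[0,\tau_0(X-Y(V))\wedge T]$ and substituting the Kolmogorov equation for $F^M$ collapses the drift to
\[
\tfrac12\bigl(a(c_t,Z_t)-a_M(Z_t)\bigr)\partial_d^2 F^M = -\bigl(|\sigma_1\sigma_2|(Z_t)+c_t\sigma_1(Z_t)\sigma_2(Z_t)\bigr)\partial_d^2 F^M\ge 0,
\]
because the first factor is non-negative (as $|c_t|\le 1$) and the second is non-positive (concavity). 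Hence $F^M(T-\,\cdot,D,Z)$ is a submartingale up to $\tau_0\wedge T$, and optional stopping gives $F^M(T,d,z)\le\PP_{d,z}[\tau_0(X-Y(V))>T]$, upon noting that $F^M(0,D_T,Z_T)\1_{\tau_0>T}+F^M(T-\tau_0,0,Z_{\tau_0})\1_{\tau_0\le T} = \1_{\tau_0>T}$; this is the left inequality. Symmetrically, $\tfrac12(a(c_t,Z_t)-a_S(Z_t))\partial_d^2 F^S\le 0$ makes $F^S(T-\,\cdot,D,Z)$ a supermartingale and yields the right inequality.

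The main obstacle I expect is the concavity step, compounded by the low regularity of $F^{M/S}$: the initial datum is discontinuous at $d=0$ and $F^{M/S}$ has a Dirichlet boundary there. To apply It\^o's formula rigorously I would smooth by shifting $T$ to $T+\eps$ (so that $F^{M/S}(\cdot+\eps,\cdot,\cdot)$ is $C^{1,2}$ in $(T,d)$), localise away from $d=0$ by stopping, and let $\eps\downarrow 0$ using bounded convergence. Boundedness of the indicator payoff is also the reason why assumption~\eqref{eq:Int_Assump} is not needed for Theorem~\ref{thm:coupling}.
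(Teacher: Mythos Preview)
Your outline matches the paper's proof: Bellman verification via It\^o's formula, with concavity of the candidate value functions established through a time-change representation as a Brownian first-passage probability conditioned on the path of $Z$. Two points need more care. First, the conditioning-on-$Z$ step (and the claim that $(D^{II},Z)$, $(D^{I},Z)$ are Markov) works only because $Z$ is \emph{independent} of $B$; this is not assumed but must be deduced from the $(\cF_t)$-Markov property of the chain (Lemma~\ref{lem:_Zand_W_indep}), and is precisely what fails for general $(\cF_t)$-Feller $Z$. Second, for the right-hand (synchronous) inequality your time-change $A^S_T=\int_0^T a_S(Z_s)\,\dd s$ need not be strictly increasing, since $a_S(z)=0$ whenever $|\sigma_1(z)|=|\sigma_2(z)|$; the paper first perturbs $\sigma_1$ to $\sigma_1^\epsilon$ with $|\sigma_1^\epsilon|\neq|\sigma_2|$ everywhere, runs the argument, and then passes to the limit by a monotone comparison of the time-changes. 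The paper also localises $Z$ (stopping it on exit from compacts so the generator is bounded) before running the verification, which you will need in order to make the It\^o/martingale steps rigorous via Lemma~\ref{lem:Q_mart}.
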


\begin{remarks} 
\begin{enumerate}[(i)]
\item 
The function
$\hat c_I = -\hat c_{II}$,
and hence the Brownian motions
$V^I=-V^{II}$,
that feature in the solution of 
Problems~\textbf{(T)} and~\textbf{(C)}
depend neither on the maturity 
$T$ 
nor on the precise form of the convex cost function
$\phi$.
No local regularity (e.g. differentiability) of 
$\phi$
is required for Theorem~\ref{thm:tracking} to hold.
Note also that essentially no restriction 
on the volatility functions  
$\sigma_1$
and 
$\sigma_2$
in the stochastic integrals in~\eqref{eq:Proc_Def_X}
is necessary, for the two theorems to hold.
Furthermore, the assumptions in 
Theorems~\ref{thm:tracking} and~\ref{thm:coupling}
place no restrictions on
the filtration
$(\cF_t)_{t\geq0}$;
in particular 
$(\cF_t)_{t\geq0}$
need not be generated by the processes
$B$
and 
$Z$. 
\item 
\label{Rem:Randon_Nikodym_DEr}
Brownian motion
$V^I$ (resp. $V^{II}$)
is chosen to minimise (resp. maximise) at each moment in time 
the Radon-Nikodym derivative of the quadratic variation of the process
$X-Y(V)$
over the set
$\cV$.
It is clear that 
$V^I$ and $V^{II}$
can also be defined for much more general 
integrands than the ones considered in~\eqref{eq:Proc_Def_X}
and that the generalisations will still be locally extremal.
\item Section~\ref{subsec:Non_Markov_Trackin}
shows that local maximisation/minimisation 
of the Radon-Nikodym derivative mentioned in item~\eqref{Rem:Randon_Nikodym_DEr}
is 
also globally optimal in a 
non-Markovian setting in the special case 
of the quadratic tracking (i.e. where the cost function is
$\phi(x)=x^2$).
Section~\ref{subsec:Non_Markov_Coupling}
establishes a coupling result, analogous to
Theorem~\ref{thm:coupling},
in the case where the volatility processes are 
time-inhomogeneous but
deterministic.
However, Sections~\ref{subsubsec:GEneral_Fails}
and~\ref{subsection:Independence_Also_Does_NOt_Work}
show that the generalisations of 
Theorems~\ref{thm:tracking} and~\ref{thm:coupling}
do not hold for general $(\cF_t)$-Feller processes.
\item The key fact, established in 
Lemma~\ref{lem:_Zand_W_indep},
that enables us to prove 
Theorems~\ref{thm:tracking} and~\ref{thm:coupling}
is that the chain 
$Z$
is in fact independent 
of the driving 
Brownian motion 
$B$
(see Section~\ref{subsec:Indepdnednt_Z_B}).
It is therefore natural to ask whether the results
in the theorems above hold for a general $(\cF_t)$-Feller process
$Z$,
which is 
independent of 
$B$.
The example in 
Section~\ref{subsection:Independence_Also_Does_NOt_Work}
shows that 
Theorem~\ref{thm:tracking}
cannot be generalised even if 
such independence is assumed.
\item The results in 
Theorems~\ref{thm:tracking} and~\ref{thm:coupling}
are likely to remain valid in the generalised
setting %if Lemma~\ref{lem:_Zand_W_indep} is assumed:
given by the filtered space 
$(\Omega, (\cF_t)_{t\geq0}, \cF, \PP)$
supporting an additional filtration 
$(\cG_t)_{t\geq0}$,
such that 
$\cF_t\subseteq\cG_t$
for
$t\geq0$,
with properties that every Brownian motion in
$V\in\cV$
is also a 
$(\cG_t)$-Brownian motion
and the continuous
$(\cG_t)$-Feller process $Z$
is independent of any $V\in\cV$.
These conditions are satisfied 
for example by 
$\cG_t:=\cF_t\otimes\cH_t$,
where the filtration
$(\cH_t)_{t\geq0}$
is independent of 
$(\cF_t)_{t\geq0}$
and supports a continuous
$(\cH_t)$-Feller (and hence 
$(\cG_t)$-Feller) process
$Z$,
e.g. $Z$ is a stochastic volatility process
(i.e. a solution of an SDE)
driven by 
an $(\cH_t)$-Brownian motion.
The reason why such a generalisation is likely to remain true
lies in the fact that the representation in~\eqref{eq:Def_cL}
still holds in this setting and the continuity of
the paths of the process 
$Z$
could be used to perform the necessary localisations
in the proofs of 
Theorems~\ref{thm:tracking} and~\ref{thm:coupling}.
Note that by Lemma~\ref{lem:_Zand_W_indep} 
the setting of the paper is given by 
$\cG_t:=\cF_t$
and $Z$ a continuous-time $(\cF_t)$-Markov chain.\footnote{We thank
one of the referees for this remark.}
%be a filtered probability space that supports 
%a standard
%$(\cF_t)$-Brownian motion
%$B=(B_t)_{t\geq0}$
%and let
%$$
%\cV:=\left\{V=(V_t)_{t\geq0}\,:\,V\text{ is an $(\cF_t)$-Brownian motion with }\,V_0=0 \right\}
%$$
%However, the assumptions
%required (every $\mathcal{F}_t$-Brownian motion in $\mathcal{V}$ is independent of the
%$\mathcal{G}_t$-Feller process $Z$, where $\mathcal{F}_t\subseteq\mathcal{G}_t$, \textit{and}
%every $V\in\mathcal{V}$ is also a  $\mathcal{G}_t$-Brownian motion) appear to be
%slightly unnatural, with the main example where they are satisfied being $\mathcal{F}_t=\mathcal{G}_t$
%and $Z$ an $\mathcal{F}_t$-Markov chain, which is the setting of the paper.
\item The volatility functions 
$\sigma_1$
and 
$\sigma_2$
are typically distinct, which makes the maximal coupling time
$\tau_0(X-Y(V^I))$
finite.
Hence the upper bound 
in Theorem~\ref{thm:coupling}
is
non-trivial 
(i.e. smaller than $1$).
\item Recall that %\label{rem:Sign}
$\sgn(x)$
is $1$
if
$x>0$
and 
$-1$
if
$x<0$.
In the setting of Theorems~\ref{thm:tracking} and~\ref{thm:coupling}, 
the choice of 
$\sgn(0)$
in
$\{1,-1\}$
can be arbitrary, since 
by~\cite[Prop~IV.1.13]{RevuzYor:99}
it influences neither the laws of the processes
$\phi(X-Y(V^I))$,
$\phi(X-Y(V^{II}))$
nor of the variables
$\tau_0(X-Y(V^{I}))$, $\tau_0(X-Y(V^{II}))$.  
\item In~\cite{BarlowJacka:86}
the authors establish an inequality,
analogous to 
the first inequality of
Theorem~\ref{thm:tracking},
in the case 
$X$
and
$Y(V)$
are solutions of driftless SDEs. 
A related inverse question to the tracking problem
is studied in~\cite{McNamara:83}.
A general reference on the theory of coupling is given in~\cite{Lindvall:02}.
\item In the case $Z$
is a continuous-time Markov chain, the processes in~\eqref{eq:Proc_Def_X}
are regime-switching martingales as they evolve as Brownian motions
with varying values of the instantaneous volatility, determined by the current state of the chain $Z$
and the functions
$\sigma_i$, $i=1,2$.
The seminal paper~\cite{Hamilton:89} 
introduced such regime-switching models  
to economics and finance. Since then,
such models  
have found a plethora of applications in areas as diverse as
macroeconomics, term-structure modelling
and option pricing (see e.g.~\cite{LaiXing:08}
and the references therein). 
\end{enumerate}
\end{remarks}

\subsection{Structure of the paper}
Sections~\ref{subsec:BM_rep} and~\ref{subsec:Q_MArt}
state two well-known lemmas 
that allow us to relate the coupling inequalities above
to problems in stochastic control. 
Section~\ref{subsec:Indepdnednt_Z_B}
proves that the $(\cF_t)$-Markov chain
$Z$
and the Brownian motion $B$
are independent. 
Sections~\ref{sec:Tracking}
and~\ref{subsec:Proof_TRacking}
prove Theorem~\ref{thm:tracking}. 
Section~\ref{subsec:Non_Markov_Trackin}
discusses Problem~\textbf{(T)}
in a non-Markovian setting
and establishes a generalisation of 
Theorem~\ref{thm:tracking}
in the case of a quadratic cost function.
In Sections~\ref{sec:Coupling},~\ref{subsec:StochTime_Change}
and~\ref{subsec:Proof_Coupling},
we establish Theorem~\ref{thm:coupling}.
Section~\ref{subsec:Non_Markov_Coupling}
proves an analogue of Theorem~\ref{thm:coupling}
in the case the volatility processes are time-inhomogeneous
but deterministic.
Section~\ref{sec:CounterEx} discusses four counterexamples
to the Conjecture above in the case where certain assumptions of 
Theorems~\ref{thm:tracking} and~\ref{thm:coupling}
are violated.
Appendix~\ref{sec:App}
contains the proofs of Lemmas~\ref{lem:R^V_rep}
and~\ref{lem:Q_mart}.
of Section~\ref{sec:Setting}.

%========================================================================================
\section{Preliminaries}
\label{sec:Setting}

\subsection{The set of Brownian motions on a probability space}
\label{subsec:BM_rep}
Without loss of generality we may assume that
the probability space
$(\Omega, (\cF_t)_{t\geq0}, \cF, \PP)$,
where the $(\cF_t)$-Brownian motion 
$B$
and the chain
$Z$
in~\eqref{eq:Proc_Def_X}
are defined, 
supports a further 
$(\cF_t)$-Brownian motion 
$B^\perp\in\cV$,
which is independent of 
$B$.
If this were not the case, we could enlarge the 
probability space and note that this only increases
the set 
$\cV$
of all 
$(\cF_t)$-Brownian motions.
Since the extremal Brownian motions
$V^I,V^{II}$
in Problems~\textbf{(T)} and~\textbf{(C)}
are constructed from 
$B$
and
$Z$
alone, they must also be extremal in the original problem.
We shall henceforth assume that 
$B^\perp\in\cV$
exists.
Any 
$V\in\cV$
and the process
$X-Y(V)$,
which plays a key role in all that follows, 
therefore possess the following representation.

\begin{lemma} 
\label{lem:R^V_rep}
For any
$V\in\cV$
there exist $(\cF_t)$-Brownian motion 
$W\in\cV$
and
$C=(C_t)_{t\geq0}$,
such that 
$W$
and
$B$
are independent,
$C$
is progressively measurable with
$-1\leq C_t\leq 1$
for all
$t\geq0$
$\PP$-a.s.,
and the following representations hold:
\begin{equation}
\label{eq:V_Rep}
V_t=\int_0^tC_s\,\dd B_s + \int_0^t(1-C^2_s)^{1/2}\,\dd W_s, 
\end{equation}
and 
$X-Y(V)=R(V)$,
where
$R(V)=(R_t(V))_{t\geq0}$
is given by 
$R_0(V)=r:=x-y$
and
\begin{equation}
\label{eq:R^V_Rep}
R_t(V):=r+\int_0^t(\sigma_1(Z_s)-C_s\sigma_2(Z_s))\,\dd B_s 
- \int_0^t(1-C^2_s)^{1/2}\sigma_2(Z_s)\,\dd W_s.
\end{equation}
\end{lemma}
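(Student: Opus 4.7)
The plan is to obtain the representation~\eqref{eq:V_Rep} by a standard Kunita--Watanabe/L\'evy characterisation argument, and then substitute it into the definition of $X-Y(V)$ to get~\eqref{eq:R^V_Rep}. Since both $B$ and $V$ are $(\cF_t)$-Brownian motions, the Kunita--Watanabe inequality gives $|\dd\la V,B\ra_t|\leq\dd t$, so the covariation is absolutely continuous and its density is a progressively measurable process $C$ with $|C_s|\leq 1$. In other words $\la V,B\ra_t=\int_0^t C_s\,\dd s$. Setting $N_t:=V_t-\int_0^tC_s\,\dd B_s$ yields a continuous local martingale with $\la N,B\ra=0$ and
\[
\la N\ra_t=\la V\ra_t-2\int_0^tC_s\,\dd\la V,B\ra_s+\int_0^tC_s^2\,\dd s=\int_0^t(1-C_s^2)\,\dd s.
\]

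Next I would produce the Brownian motion $W$ of the statement by ``dividing out'' the integrand $(1-C^2)^{1/2}$ from $N$. The only subtlety is the possibly non-negligible set $\{C_s^2=1\}$, on which such division is not allowed; this is precisely where the auxiliary independent Brownian motion $B^\perp\in\cV$, introduced in Section~\ref{subsec:BM_rep}, is used. Define
\[
W_t:=\int_0^t\1_{\{C_s^2<1\}}(1-C_s^2)^{-1/2}\,\dd N_s+\int_0^t\1_{\{C_s^2=1\}}\,\dd B^\perp_s.
\]
A direct computation gives $\la W\ra_t=t$ and $\la W,B\ra_t=0$ (using $\la N,B\ra=0$ and the independence of $B^\perp$ and $B$). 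By L\'evy's characterisation applied to the two-dimensional continuous local martingale $(B,W)$, the pair is a planar $(\cF_t)$-Brownian motion, so $W\in\cV$ and is independent of $B$.

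To recover $V$ from $W$, I would verify that $\int_0^t(1-C_s^2)^{1/2}\,\dd W_s=N_t$: the piece over $\{C_s^2<1\}$ collapses after multiplication to $\int_0^t\1_{\{C_s^2<1\}}\dd N_s$, and the contribution of $\{C_s^2=1\}$ to $N$ vanishes since the stochastic integral $\int_0^t\1_{\{C_s^2=1\}}\,\dd N_s$ has quadratic variation $\int_0^t\1_{\{C_s^2=1\}}(1-C_s^2)\,\dd s=0$. Combining with $V_t=\int_0^tC_s\,\dd B_s+N_t$ yields~\eqref{eq:V_Rep}. Finally, plugging this decomposition of $V$ into $Y_t(V)=y+\int_0^t\sigma_2(Z_s)\,\dd V_s$ and subtracting from $X_t$ produces the two stochastic integrals in~\eqref{eq:R^V_Rep} after a routine associativity of stochastic integration.

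The only real obstacle is the handling of the degeneracy set $\{C_s^2=1\}$; every other step is a bookkeeping application of L\'evy's theorem and standard properties of the stochastic integral. The bound $|C_s|\leq 1$ a.s.\ for every $s$ (as opposed to only $\dd s\otimes\dd\PP$-a.e.) is obtained by passing to a jointly measurable version of the density, which is harmless because it enters the proof only through stochastic integrals.
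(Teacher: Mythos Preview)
Your argument is correct and follows essentially the same route as the paper's proof in Appendix~\ref{sec:App_BM_representation_sketch}: Kunita--Watanabe to extract the bounded density $C$, form the orthogonal local martingale $N=V-\int C\,\dd B$, and build $W$ from $N$ on $\{C^2<1\}$ and from $B^\perp$ on the degeneracy set, then invoke L\'evy's characterisation. If anything, your write-up is slightly more explicit than the paper's in verifying that $\int(1-C^2)^{1/2}\,\dd W$ actually recovers $N$.
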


\begin{remarks}
\begin{enumerate}[(i)]
\item Equality~\eqref{eq:V_Rep} in Lemma~\ref{lem:R^V_rep} 
is a well-known representation for a 
Brownian motion 
$V\in\cV$
in terms of 
$B$
(see e.g.~\cite{BarlowJacka:86} and the references
therein).
For completeness and because of the importance
of the representation in~\eqref{eq:R^V_Rep},
which follows directly from~\eqref{eq:V_Rep},
the proof of Lemma~\ref{lem:R^V_rep}
is given in the appendix (see Section~\ref{sec:App_BM_representation_sketch});
it is this proof that requires the existence of
$B^\perp\in\cV$
independent  
of 
$B$.
\item Note that 
$W$
and
$B$
in Lemma~\ref{lem:R^V_rep}
are independent, but the process 
$C$
may depend on either (or both)
Brownian motions 
$B,W$.
\end{enumerate}
\end{remarks}

\subsection{$Q$-matrices, related operators and martingales}
\label{subsec:Q_MArt}
Let
$Q$
denote the 
$Q$-matrix 
of the continuous-time
%$(\cF_t)$-adapted 
$(\cF_t)$-Markov chain
$Z$.
We define the action of 
$Q$
on the space of bounded functions on
$\bbE$
in the standard way:
for a bounded
$g:\bbE\to\bbR$,
let
\begin{eqnarray*}
%\label{eq:Q_Action}
Qg:\bbE\to\bbR\qquad\text{be given by the formula}\quad
(Qg)(z):=\sum_{z'\in\bbE}Q(z,z')g(z'),
\end{eqnarray*}
since the series %in~\eqref{eq:Q_Action}
converges absolutely for every
$z\in\bbE$.

Let the function 
$H:\bbE\times\bbR\to\bbR$
satisfy the assumptions:
$H(\cdot,z)\in\cC^{2}(\bbR)$
and %the restriction 
$H(r,\cdot):\bbE\to\bbR$
is bounded
for any
$r\in\bbR$.
Then, for any
$c\in[-1,1]$,
we define 
$\cL^cH:\bbE\times\bbR\to\bbR$ 
by the formula:
\begin{eqnarray}
\label{eq:Def_cL}
(\cL^c H)(r,z) & := &  \frac{1}{2}\left(\sigma_1^2-c2\sigma_1\sigma_2+\sigma_2^2\right)(z)
\frac{\partial^2 H}{\partial r^2}(r,z) + (QH(r,\cdot))(z).
\end{eqnarray}
The operator 
$\cL^c$
is closely related to a generator of the process
$(R(V),Z)$
and will play an important role in the solution of 
the stochastic control problems.

The next lemma describes a class of martingales related to the
chain 
$Z$.

\begin{lemma}
\label{lem:Q_mart}  
Let 
$F:\bbR_+\times\bbR\times\bbE\to\bbR$
be a bounded function,
such that for any 
$z\in\bbE$ 
the restriction to the first two coordinates 
$F(\cdot,\cdot,z):\bbR_+\times\bbR\to\bbR$
is continuous. 
Assume that the generator 
$Q$
satisfies
\begin{equation}
\label{eq:sup_Assum_Q}
\sup\{ -Q(z,z)\>:\> z\in\bbE\}<\infty.
\end{equation}
Let 
$U=(U_t)_{t\geq0}$
be any continuous semimartingale, adapted to the filtration $(\cF_t)_{t\geq0}$.  
Then the process 
$M^{U}=(M^{U}_t)_{t\geq0}$, 
given by 
\begin{eqnarray*} 
M^{U}_t & := & \sum_{0<s\leq t}\left[F(s,U_s,Z_s)-F(s,U_s,Z_{s-})\right]
-\int_0^t(QF(s,U_s,\cdot))(Z_{s-})\,\dd s,
\end{eqnarray*}
is a true $(\cF_t,\PP_z)$-martingale
for any starting point
$z\in\bbE$.
\end{lemma}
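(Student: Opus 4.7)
The plan is to recognise $M^U$ as a compensated integral against the jump random measure $\mu^Z$ of the chain $Z$. Since $Z$ is an $(\cF_t)$-Markov chain on the countable state space $\bbE$ with $Q$-matrix $Q$, its $(\cF_t)$-predictable compensator $\nu^Z$ satisfies, for any bounded $(\cF_t)$-predictable integrand $h:(0,\infty)\times\bbE\to\bbR$,
$$\int_{(0,t]\times\bbE}h(s,z')\,\nu^Z(\dd s,\dd z') \;=\; \int_0^t \sum_{z'\in\bbE,\,z'\neq Z_{s-}} Q(Z_{s-},z')\,h(s,z')\,\dd s.$$
This is the standard identification: under hypothesis~\eqref{eq:sup_Assum_Q} and the $(\cF_t)$-Markov property, the Dynkin martingale $f(Z_t)-f(Z_0)-\int_0^t(Qf)(Z_{s-})\,\dd s$ is an $(\cF_t)$-martingale for every bounded $f:\bbE\to\bbR$, and this characterises $\nu^Z$.

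Writing the jump sum in the definition of $M^U$ as an integral against $\mu^Z$,
$$\sum_{0<s\leq t}\bigl[F(s,U_s,Z_s)-F(s,U_s,Z_{s-})\bigr] \;=\; \int_{(0,t]\times\bbE} H(s,z')\,\mu^Z(\dd s,\dd z'), \quad H(s,z'):=F(s,U_s,z')-F(s,U_s,Z_{s-}),$$
the integrand $H$ is $(\cF_t)$-predictable: $U$ is continuous and adapted, so $U_{s-}=U_s$ is predictable; $Z_{s-}$ is predictable; and $F$ is continuous in its first two arguments and bounded throughout. Integrating $H$ against $\nu^Z$ and using the row-sum identity $\sum_{z'\in\bbE}Q(Z_{s-},z')=0$ to recombine the two resulting sums into one yields
$$\int_0^t\!\!\int_{\bbE} H(s,z')\,\nu^Z(\dd s,\dd z') \;=\; \int_0^t \sum_{z'\in\bbE} Q(Z_{s-},z')\,F(s,U_s,z')\,\dd s \;=\; \int_0^t (QF(s,U_s,\cdot))(Z_{s-})\,\dd s,$$
which is precisely the compensator term in $M^U$.

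Finally, boundedness of $F$ gives $|H|\leq 2\|F\|_\infty$, and combined with~\eqref{eq:sup_Assum_Q} this yields
$$\EE\!\int_0^t\!\!\int_{\bbE} |H(s,z')|\,\nu^Z(\dd s,\dd z') \;\leq\; 2\|F\|_\infty \Bigl(\sup_{z\in\bbE}(-Q(z,z))\Bigr)\, t \;<\;\infty,$$
so the compensated integral $M^U$ is a true $(\cF_t)$-martingale by the standard theory of integer-valued random measures. The main technical obstacle is justifying that the $(\cF_t)$-compensator of $\mu^Z$ has the stated form even when $(\cF_t)$ is strictly larger than the natural filtration of $Z$; this rests solely on the $(\cF_t)$-Markov assumption via the Dynkin martingale. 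The time- and $U$-dependence can, if needed, be handled by first establishing the result for time-independent bounded $f:\bbE\to\bbR$ (the classical Dynkin case), extending to $F(s,z)$ by step-function approximation in $s$, and finally to $F(s,U_s,z)$ using joint continuity of $F$ in its first two arguments and continuity of $U$, with dominated convergence justified by the uniform bound above.
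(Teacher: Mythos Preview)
Your proof is correct and takes a genuinely different route from the paper's. The paper argues directly: it partitions $[t,t']$ into subintervals of length $\Delta t$, uses the jump-chain/holding-time description of $Z$ together with the $(\cF_t)$-Markov property to obtain the local estimate
\[
\EE_z\!\left[\sum_{u<s\leq u+\Delta u}\bigl[F(s,U_s,Z_s)-F(s,U_s,Z_{s-})\bigr]\,\Big|\,\cF_u\right]=\Delta u\,(QF(u,U_u,\cdot))(Z_u)+o(\Delta u),
\]
and then passes to the limit $\Delta t\downarrow 0$ by dominated convergence, using~\eqref{eq:sup_Assum_Q} and the boundedness of $F$ to control the error terms uniformly. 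Your approach instead identifies $M^U$ structurally as $\int H\,\dd(\mu^Z-\nu^Z)$ and appeals to the general theory of integer-valued random measures: once the $(\cF_t)$-compensator $\nu^Z$ is known, predictability of $H$ and the $L^1$ bound finish the job in one stroke. Your version is cleaner and situates the lemma within a standard framework (essentially Jacod--Shiryaev), at the cost of invoking that machinery; the paper's argument is more elementary and self-contained, which may be why the authors chose it given their remark that ``a precise reference appears difficult to find.'' Both proofs hinge on the same point you correctly flag as the crux: the $(\cF_t)$-Markov property of $Z$ is what guarantees the compensator has the stated form relative to the enlarged filtration $(\cF_t)$, not merely the natural one.
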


\begin{remarks}
\begin{enumerate}[(i)]
\item The key point in Lemma~\ref{lem:Q_mart}
is that we do not assume 
that the process
$(U,Z)$
is Markov, 
since all that is required of 
$U$ 
is that it has continuous paths and is adapted to
the underlying filtration on the original probability space. 
This fact plays a crucial role in the solution of our 
optimisation problems, as it allows us to eliminate 
all the (suboptimal) non-Markovian couplings of 
the Brownian motions 
$V$
and
$B$, the laws
of which are not tractable.
\item Assumption~\eqref{eq:sup_Assum_Q} 
on 
$Q$
is equivalent to stipulating that 
$Q$
is a bounded linear operator. This is clearly
satisfied when the state space
$\bbE$
is finite. 
\item The result in Lemma~\ref{lem:Q_mart} is well-known
but a precise reference appears difficult to find.
For this reason, and because of its importance in the proofs of 
Theorems~\ref{thm:tracking} and~\ref{thm:coupling}, 
a proof of Lemma~\ref{lem:Q_mart}
is given in Appendix~\ref{sec:App_Lemma_Q_mart}.
\end{enumerate}
\end{remarks}

\subsection{$(\cF_t)$-Brownian motion and continuous-time
$(\cF_t)$-Markov chain are independent}
\label{subsec:Indepdnednt_Z_B}
Intuitively, 
the independence of
the chain 
$Z$
and a Brownian motion 
$W\in\cV$
follows from the fact that 
any 
$(\cF_t)$-martingale of the form
$(\psi(Z_t,t))_{t\geq0}$,
where 
$\psi$
is a real function defined on 
the product
$\bbE\times\bbR_+$,
is equal to the sum of its jumps 
minus an absolutely continuous compensator
and therefore 
has constant covariation 
with any continuous semimartingale
adapted to $(\cF_t)_{t\geq0}$. 
The key fact underpinning   
this argument is that 
$Z$
is a Markov process on the filtration 
$(\cF_t)_{t\geq0}$ 
(see Section~\ref{subsection:NOn_MArkov_Chain}
for counterexamples to 
Theorems~\ref{thm:tracking} and~\ref{thm:coupling}
when this assumption is relaxed).

\begin{lemma}
\label{lem:_Zand_W_indep}
An 
$(\cF_t)$-Markov chain 
$Z$
is independent of any 
$(\cF_t)$-Brownian motion 
$W$
in
$\cV$.
\end{lemma}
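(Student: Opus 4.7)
I would prove the independence of $Z$ and $W$ by factorising the joint characteristic functional. By a standard monotone class argument it suffices, for any $\lambda\in\bbR$, any $0<t_1<\cdots<t_n=T$, and any bounded Borel functions $f_1,\ldots,f_n:\bbE\to\bbR$, to establish
\begin{equation*}
\EE\Bigl[\te^{i\lambda W_T}\,\textstyle\prod_{k=1}^n f_k(Z_{t_k})\Bigr]
=\EE\bigl[\te^{i\lambda W_T}\bigr]\cdot\EE\Bigl[\textstyle\prod_{k=1}^n f_k(Z_{t_k})\Bigr].
\end{equation*}
The extension to several evaluation times of $W$ is then obtained by replacing the single exponential below by the continuous bounded $(\cF_t)$-martingale $\EE[\te^{i\sum_j\lambda_j W_{s_j}}\mid\cF_t]$ (continuous thanks to the independent-increments property of $W$ in $(\cF_t)$), and independence of $\sigma(W)$ and $\sigma(Z)$ as a whole follows by a further monotone class step.

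\textbf{Two martingales.} The argument uses the continuous, bounded, complex-valued $(\cF_t)$-martingale
\begin{equation*}
\cE_t:=\exp\bigl(i\lambda W_{t\wedge T}+\tfrac{\lambda^2}{2}(t\wedge T)\bigr),
\end{equation*}
and the bounded $(\cF_t)$-martingale $N_t:=\EE[\prod_{k=1}^n f_k(Z_{t_k})\mid\cF_t]$. The crux of the proof is that $N$ is \emph{purely discontinuous}. Using the $(\cF_t)$-Markov property of $Z$, one sees that on each interval $[t_j,t_{j+1})$ (with $t_0:=0$) the martingale has the form
\begin{equation*}
N_t=G_j\,\psi_j(Z_t,t),\qquad G_j:=\textstyle\prod_{k\leq j}f_k(Z_{t_k})\in\cF_{t_j},
\end{equation*}
where $\psi_j(z,s):=\EE[\prod_{k>j}f_k(Z_{t_k})\mid Z_s=z]$ is bounded deterministic and satisfies the backward Kolmogorov equation $\partial_s\psi_j+Q\psi_j=0$. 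Applying Lemma~\ref{lem:Q_mart} with $U\equiv 0$ and $F(s,\cdot,z):=\psi_j(z,s)$, combined with the Itô formula for càdlàg functions of time, identifies the martingale part of $N$ on $[t_j,t_{j+1})$ with the compensated sum of the jumps of $Z$. Continuity of $N$ across each deterministic time $t_j$ follows from the fact that a continuous-time Markov chain a.s.\ does not jump at a fixed time together with the matching identity $\psi_j(z,t_{j+1})=f_{j+1}(z)\psi_{j+1}(z,t_{j+1})$ obtained from the tower property.

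\textbf{Conclusion and main obstacle.} Because $\cE$ is continuous and $N$ is purely discontinuous, $[\cE,N]\equiv 0$ and integration by parts yields
\begin{equation*}
\cE_t N_t-\cE_0 N_0=\int_0^t N_{s-}\,\dd\cE_s+\int_0^t\cE_s\,\dd N_s,
\end{equation*}
a local, hence (by boundedness) true, $(\cF_t)$-martingale. Taking expectations at $t=T$ and using $\cE_0=1$ yields $\te^{\lambda^2 T/2}\EE[\te^{i\lambda W_T}\prod_k f_k(Z_{t_k})]=\EE[\prod_k f_k(Z_{t_k})]$, which is the desired factorisation. The principal technical point is rigorously verifying that $N$ has no continuous martingale part; this is where the assumption that $Z$ is Markov with respect to the ambient filtration $(\cF_t)$ (and not merely with respect to its own natural filtration) is indispensable, since it forces conditional expectations of future-of-$Z$ quantities to collapse to deterministic functions of $Z_t$, leaving only jumps and an absolutely continuous compensator. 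If the boundedness hypothesis~\eqref{eq:sup_Assum_Q} on $Q$ is not assumed, a preliminary localisation at the successive jump times of $Z$ is required before Lemma~\ref{lem:Q_mart} can be invoked, and the factorisation is then propagated to the limit by dominated convergence.
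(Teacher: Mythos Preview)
Your proposal is correct and follows essentially the same route as the paper: both arguments show that the product of a continuous $W$-martingale and a purely discontinuous $Z$-martingale is itself a martingale (since their covariation vanishes), and then extend to finite-dimensional distributions. The only cosmetic differences are that you use complex exponentials on the $W$ side where the paper uses general bounded test functions, and you invoke Lemma~\ref{lem:Q_mart} to certify pure discontinuity of $N$ where the paper argues directly via It\^o's formula and the backward Kolmogorov equation (thereby sidestepping the localisation you flag as necessary when $Q$ is unbounded).
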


\begin{proof}
We first show that the random variables 
$W_T$ and $Z_T$ are independent
for any
$T>0$. 
Let the functions
$f:\bbR\to\bbR$ 
and 
$g:\bbE\to\bbR$ 
be bounded and measurable with 
$f$
suitably smooth.
We need to establish the
equality 
$\EE[f(W_T)g(Z_T)]=\EE[f(W_T)]\EE[g(Z_T)]$.
Define the $(\cF_t)$-martingales 
$M^f=(M^f_t)_{t\in[0,T]}$ and  
$N^g=(N^g_t)_{t\in[0,T]}$ 
by
$$M^f_t:=\EE[f(W_T)|\cF_t]\qquad\text{and}\qquad N^g_t:=\EE[g(Z_T)|\cF_t].$$
Note that it is sufficient to prove that 
the product
$M^fN^g=(M^f_tN^g_t)_{t\in[0,T]}$ 
is a martingale since in that case we have
\begin{eqnarray}
\label{eq:Indep_at_T}
\EE[f(W_T)]\EE[g(Z_T)]=M^f_0N^g_0=\EE[M^f_TN^g_T]=\EE[f(W_T)g(Z_T)].
\end{eqnarray}
Now
$M^f_t=(P^W_{T-t}f)(W_t)$,
where $P^W$ is the Brownian semigroup,
and hence
$M^f$
is a continuous martingale. Similarly
we have 
$N^g_t=(P_{T-t}g)(Z_t)$,
where $P$ denotes the semigroup for $Z$,
and hence It\^o's lemma for general semimartingales~\cite[Sec~II.7,~Thm.~33]{Protter:05}
and the Kolmogorov backward equation imply
$\dd N^g_t=(P_{T-t}g)(Z_t)-(P_{T-t}g)(Z_{t-})-(Q(P_{T-t}g))(Z_t)\dd t$
($Q$ 
denotes the generator matrix for 
$Z$).
In particular, the quadratic variation of 
$N^g$
is equal to the sum of its jumps, i.e.
the continuous part of the process
$[N^g,N^g]$ is almost surely zero. Hence the continuity of 
$M^f$
and~\cite[Sec~II.6,~Thm.~28]{Protter:05}
imply that the covariation satisfies
$\dd [M^f,N^g]_t=0$.
Therefore, by the product rule, 
the infinitesimal increment  
of the process
$M^fN^g$
equals
$$\dd (M^f_tN^g_t)=N^g_{t-}\dd M^f_t+M^f_{t-}\dd N^g_t+\dd [M^f,N^g]_t =N^g_t\dd M^f_t+M^f_t\dd N^g_t$$
(the subscripts $t-$ can be change to $t$ since $M^f$ is continuous),
making 
$M^fN^g$
a martingale, since both
$M^f$ and $N^g$ are bounded martingales, 
and equality~\eqref{eq:Indep_at_T} follows.
By an approximation argument and
the Dominated Convergence Theorem
we conclude that~\eqref{eq:Indep_at_T} 
holds for arbitrary 
bounded measurable functions
$f$
and
$g$
and 
the independence of 
$W_T$
and
$Z_T$
follows.

To prove independence of random vectors
$(W_{t_1},\ldots, W_{t_n})$ 
and 
$(Z_{t_1},\ldots, Z_{t_n})$
for any
$n\in\bbN$
and a sequence of times
$0=t_0<t_1<\cdots< t_n$,
pick any bounded measurable functions 
$f:\bbR^n\to \bbR$
and
$g:\bbE^n\to \bbR$
and define 
recursively 
the functions 
$f_k:\bbR^{k\vee1}\to \bbR$
and
$g_k:\bbE^{k\vee1}\to \bbR$
for 
$k=n,\ldots,0$,
which are again 
bounded and measurable, 
by
$f_n:=f, g_n:=g$ 
and
$$f_{k-1}(W_{t_1},\ldots, W_{t_{k-1}}):=\EE[f_k(W_{t_1},\ldots, W_{t_k})|\cF_{t_{k-1}}], \quad
g_{k-1}(Z_{t_1},\ldots, Z_{t_{k-1}}):=\EE[g_k(Z_{t_1},\ldots, Z_{t_k})|\cF_{t_{k-1}}].$$
Note that
$f_0$
and
$g_0$
are constant functions.
Equality~\eqref{eq:Indep_at_T}
applied to the bounded measurable functions
$x\mapsto f(W_{t_1},\ldots, W_{t_{n-1}},x)$
and
$z\mapsto g(Z_{t_1},\ldots, Z_{t_{n-1}},z)$
shows that the following conditional expectation factorises:
$$\EE[f(W_{t_1},\ldots, W_{t_n})g(Z_{t_1},\ldots, Z_{t_n})|\cF_{t_{n-1}}]=f_{n-1}(W_{t_1},\ldots, W_{t_{n-1}})g_{n-1}(Z_{t_1},\ldots, Z_{t_{n-1}}).$$
Therefore, by iteration and the tower property,
we see that the following holds
$$\EE[f(W_{t_1},\ldots, W_{t_n})g(Z_{t_1},\ldots, Z_{t_n})]=f_0g_0=\EE[f(W_{t_1},\ldots, W_{t_n})]\EE[g(Z_{t_1},\ldots, Z_{t_n})].$$
Since 
$f$
and
$g$
were arbitrary,
the processes 
$W$
and
$Z$
are independent. 
\end{proof}

It follows from
Lemma~\ref{lem:_Zand_W_indep}
that an $(\cF_t)$-adapted volatility process, 
given by a strong solution of an SDE, cannot be
approximated pathwise by a 
continuous-time $(\cF_t)$-Markov chain.

\begin{corollary}
Let 
$Z'$
be an 
$(\cF_t)$-adapted 
Feller semimartingale, which solves a scalar 
SDE with Lipschitz drift
and diffusion coefficients
$\mu, \sigma$
such that
$\sigma>c>0$.
Then there exists no sequence of 
continuous-time $(\cF_t)$-Markov chains 
that converges 
to
$Z'$
almost surely on compacts. 
\end{corollary}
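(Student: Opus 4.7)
The plan is to argue by contradiction, using Lemma~\ref{lem:_Zand_W_indep} to force an impossible independence. Since $Z'$ is a strong solution of the scalar SDE $\dd Z'_t = \mu(Z'_t)\,\dd t + \sigma(Z'_t)\,\dd W_t$, its driving Brownian motion $W$ is an $(\cF_t)$-Brownian motion and hence lies in $\cV$. Suppose, for contradiction, that a sequence $(Z^{(n)})_{n\geq1}$ of continuous-time $(\cF_t)$-Markov chains converges to $Z'$ almost surely on compacts.

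First I would transfer independence from the approximants to the limit. By Lemma~\ref{lem:_Zand_W_indep}, each $Z^{(n)}$ is independent of $W$. Fixing any finite set of times $t_1,\ldots,t_k$ and any bounded continuous test functions $f,g:\bbR^k\to\bbR$, the factorisation
$$\EE\!\left[f(Z^{(n)}_{t_1},\ldots,Z^{(n)}_{t_k})\,g(W_{t_1},\ldots,W_{t_k})\right]=\EE\!\left[f(Z^{(n)}_{t_1},\ldots,Z^{(n)}_{t_k})\right]\EE\!\left[g(W_{t_1},\ldots,W_{t_k})\right]$$
passes to the limit by the almost sure convergence and the bounded convergence theorem, so the finite-dimensional distributions of $Z'$ and $W$ factorise; $Z'$ and $W$ are therefore independent.

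Next I would derive a contradiction by evaluating $\EE[(Z'_t-Z'_0)W_t]$ in two incompatible ways. The independence established above gives $\EE[(Z'_t-Z'_0)W_t]=\EE[Z'_t-Z'_0]\,\EE[W_t]=0$. On the other hand, the Lipschitz and linear-growth conditions on $\mu,\sigma$ supply standard moment bounds ensuring $\EE\int_0^t(\mu^2+\sigma^2)(Z'_s)\,\dd s<\infty$; expanding via the SDE and using the same independence to kill the drift contribution, the It\^o covariation identity yields
$$\EE\!\left[(Z'_t-Z'_0)W_t\right]=\EE\!\left[\int_0^t\sigma(Z'_s)\,\dd W_s\cdot W_t\right]=\EE\!\left[\int_0^t\sigma(Z'_s)\,\dd s\right]\;\geq\;ct\;>\;0.$$
These two evaluations are incompatible, which is the desired contradiction.

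I expect the only delicate step to be the transfer of independence at the limit: almost sure convergence of the finite-dimensional vectors $(Z^{(n)}_{t_1},\ldots,Z^{(n)}_{t_k})$ to $(Z'_{t_1},\ldots,Z'_{t_k})$ combined with bounded continuous test functions makes the bounded convergence theorem directly applicable, so no further regularity on the approximating chains is required; the remainder of the argument is standard stochastic calculus together with the uniform ellipticity $\sigma>c>0$.
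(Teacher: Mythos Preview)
Your argument is correct and follows the same overall line as the paper: extract the driving $(\cF_t)$-Brownian motion $W$ from $Z'$ via $\sigma>c>0$, apply Lemma~\ref{lem:_Zand_W_indep} to get each $Z^{(n)}$ independent of $W$, pass independence to the limit, and contradict. The paper's final step is a little more direct: since $Z'$ is a strong solution it is $\sigma(W)$-measurable, so $Z^{(n)}\perp W$ already gives $Z^{(n)}\perp Z'$; taking the a.s.\ limit then forces $Z'\perp Z'$, i.e.\ $Z'$ deterministic, which is impossible because $\sigma>c>0$. Your covariance computation reaches the same contradiction but needs the moment bounds you invoke to justify the It\^o isometry, whereas the paper's self-independence argument avoids any integrability check. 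One small point worth tightening in your write-up: the statement does not hand you an $(\cF_t)$-Brownian motion $W$ a priori; as the paper does, you should construct it explicitly as $W_t=\int_0^t\sigma(Z'_s)^{-1}(\dd Z'_s-\mu(Z'_s)\,\dd s)$ and invoke L\'evy's characterisation to place it in $\cV$.
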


\begin{proof}
The process
$W=(W_t)_{t\geq0}$,
where
$W_t:=\int_0^t(\dd Z'_s-\mu(Z'_s)\dd t)/\sigma(Z'_s)$, %-\int_0^t(\mu(Z'_s)/\sigma(Z'_s))\dd t$,
is an $(\cF_t)$-adapted
continuous local martingale
with
$[W,W]_t=t$. $W$ is therefore an
$(\cF_t)$-Brownian motion 
(by L\'evy's characterisation theorem)
and 
$Z'$
is a strong solution of the SDE
$\dd Z'_t = \mu(Z'_t)\dd t+\sigma(Z'_t)\dd W_t$.
By Lemma~\ref{lem:_Zand_W_indep},
any sequence of 
continuous-time $(\cF_t)$-Markov chains is independent 
of 
$W$
and therefore also independent of 
$Z'$.
Therefore, 
since 
$Z'$
is non-deterministic,
the sequence cannot converge to 
$Z'$
almost surely on compacts.
\end{proof}

%========================================================================================
\section{Tracking}
\label{sec:Tracking}

In this section we consider the problem of \textit{tracking}
$X$
by the process 
$Y(V)$,
defined in~\eqref{eq:Proc_Def_X},
where the control is being exercised solely
by choosing the driving Brownian motion $V$.
Recall that the tracking criterion, stated for 
a convex function 
$\phi$ 
%\in\cC^2(\bbR)$
%bounded from below,
%which satisfies 
%$|\phi(x)|\leq a |x|^p$
%for large 
%$x$
in~\eqref{eq:Ass_phi}
%and
%some
%$a>0$,
%$p\in[2,\infty)$.
%More precisely, 
%for
and a time horizon 
$T>0$,
can be equivalently expressed 
in terms of the following problems:
\begin{eqnarray*}
%\text{\textbf{(Td)}}\hspace{5mm} & & 
\text{minimise}\quad \EE\left[\phi(X_T-Y_T(V))\right]\quad\text{over}\quad V\in\cV, & &\\
%\text{\textbf{(Tu)}}\hspace{5mm} & & 
\text{maximise}\quad \EE\left[\phi(X_T-Y_T(V))\right]\quad\text{over}\quad V\in\cV. & &
\end{eqnarray*}

\begin{theorem}
\label{thm:tracking_control}
Let the Brownian motions 
$V^I$
and 
$V^{II}$
be as in~\eqref{eq:Def_VI_VII}.
Assume 
$Z$
%satisfies~\eqref{eq:Int_Assump}.
satisfies~\eqref{eq:Mart_Integrab_Quad_VAr},~\eqref{eq:Z_Chain_Assumption} and~\eqref{eq:Int_Assump}
and that the function $\phi$
is as in~\eqref{eq:Ass_phi}.
Then for any positive
$T$
we have
\begin{eqnarray}
\label{eq:sol_Td}
\inf_{V\in\cV} \EE\left[\phi(X_T-Y_T(V))\right] & = &
\EE\left[\phi(X_T-Y_T(V^I))\right], \\
\label{eq:sol_Tu}
\sup_{V\in\cV} \EE\left[\phi(X_T-Y_T(V))\right] & = &
\EE\left[\phi(X_T-Y_T(V^{II}))\right]. 
\end{eqnarray}
\end{theorem}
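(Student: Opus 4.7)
The plan is to recast the two extremal problems as a stochastic control problem over $[-1,1]$-valued controls $C$ and to conclude by a verification argument built on It\^o's formula together with Lemma~\ref{lem:Q_mart}. By Lemma~\ref{lem:R^V_rep}, every $V\in\cV$ is represented as $V=\int C\,\dd B+\int(1-C^2)^{1/2}\,\dd W$ with $W\perp B$, so that $X-Y(V)=R(V)$ is the controlled process of~\eqref{eq:R^V_Rep}; by Lemma~\ref{lem:_Zand_W_indep}, the chain $Z$ is independent of both $B$ and $W$, which is the structural fact that makes the Markovian ansatz below admissible.

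For the infimum in~\eqref{eq:sol_Td} I would introduce the candidate value function
\[
v^{-}(t,r,z) := \EE\left[\phi\left(r+\int_t^T(\sigma_1-\hat c_I\sigma_2)(Z_s)\,\dd B_s\right)\,\Big|\,Z_t=z\right],
\]
i.e.\ the cost attained by the conjectured optimiser $V^I$ started from $(r,z)$ at time $t$. The independence of $Z$ and $B$ makes the conditional law of the inner integral Gaussian with mean zero and $Z$-measurable variance, so the convexity of $\phi$ transfers to convexity of $v^{-}(t,\cdot,z)$; hypothesis~\eqref{eq:Int_Assump} together with the polynomial growth of $\phi$ in~\eqref{eq:Ass_phi} ensure $v^{-}$ is finite and supply the $L^p$ bounds needed later. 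Differentiating under the expectation and invoking the Markov property of $Z$ shows that $v^{-}$ solves
\[
\partial_t v^{-}(t,r,z)+(\cL^{\hat c_I}v^{-})(t,r,z)=0,\qquad v^{-}(T,r,z)=\phi(r).
\]

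To close the argument, for an arbitrary $V\in\cV$ with associated control $C$ I would apply It\^o's formula to $v^{-}(t,R_t(V),Z_t)$; by Lemma~\ref{lem:Q_mart}, the jumps of $Z$ are compensated by $\int_0^{\cdot}(Qv^{-}(s,R_s(V),\cdot))(Z_{s-})\,\dd s$, so that the finite-variation part reduces to $\int_0^{\cdot}(\partial_t v^{-}+\cL^{C_s}v^{-})(s,R_s(V),Z_{s-})\,\dd s$. Since $\partial_r^2 v^{-}\ge 0$ and $\hat c_I(z)=\sgn(\sigma_1\sigma_2)(z)$ is precisely the minimiser of $c\mapsto\sigma_1^2-2c\sigma_1\sigma_2+\sigma_2^2$ on $[-1,1]$, this drift is pointwise non-negative, making $v^{-}(t,R_t(V),Z_t)$ a submartingale; taking expectations at $t=0$ and $t=T$ gives $\EE[\phi(R_T(V))]\ge v^{-}(0,r,z_0)=\EE[\phi(R_T(V^I))]$. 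The supremum~\eqref{eq:sol_Tu} is handled symmetrically with the candidate $v^{+}$ built from $V^{II}$ and $\hat c_{II}=-\sgn(\sigma_1\sigma_2)$, which maximises the same coefficient, yielding a supermartingale.

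The main obstacles I foresee are technical rather than conceptual: (i) $\phi$ is only convex with polynomial growth, not smooth, so the $r$-derivatives used in It\^o's formula must be justified by a mollification of $\phi$ followed by passage to the limit controlled by~\eqref{eq:Ass_phi}; (ii) Lemma~\ref{lem:Q_mart} requires $F$ bounded and $Q$ bounded as in~\eqref{eq:sup_Assum_Q}, neither of which holds here in general, so one must localise via stopping times of the form $\tau_n:=\inf\{t\ge 0:Z_t\notin\bbE_n\}\wedge\inf\{t:|R_t(V)|\ge n\}$ with $\bbE_n\uparrow\bbE$ finite, apply the lemma on $[0,\tau_n]$, and pass to the limit using the uniform integrability provided by~\eqref{eq:Int_Assump}; (iii) the continuous $\dd B$ and $\dd W$ stochastic integrals emerging from It\^o must be shown to be true martingales after localisation, again via $L^p$ bounds on the integrands $(\sigma_1-C\sigma_2)(Z)$, $(1-C^2)^{1/2}\sigma_2(Z)$ and on $v^{\pm}$. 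Once these technicalities are dispatched, the (sub/super)-martingale comparison delivers both inequalities simultaneously.
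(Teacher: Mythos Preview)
Your proposal is correct and follows essentially the same route as the paper: the same reduction to a $[-1,1]$-valued control via Lemma~\ref{lem:R^V_rep}, the same candidate value functions built from $V^I,V^{II}$, convexity in $r$ (the paper obtains it by differentiating under the expectation rather than via your conditional-Gaussianity remark, but the conclusion is identical), the same Bellman-process verification using It\^o and Lemma~\ref{lem:Q_mart} after localising both $Z$ (via $\tau_n$) and $R$ (via $\tau^K$), and the same mollification of $\phi$ at the end. The one small step the paper adds that you do not mention is that $\phi$ in~\eqref{eq:Ass_phi} need not be bounded below, and they reduce to that case by replacing $\phi$ with $\phi+c\,\mathrm{id}$ for a suitable constant $c$, which leaves both optimisation problems unchanged because $R(V)=X-Y(V)$ is a true martingale.
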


In this section we prove 
Theorem~\ref{thm:tracking_control},
which clearly implies
Theorem~\ref{thm:tracking},
and hence 
solves
Problem~\textbf{(T)}. 
The proof 
of Theorem~\ref{thm:tracking_control}
is based on Bellman's 
principle, a martingale verification argument
and an approximation scheme. 
The first stage %in the proof of Theorem~\ref{thm:tracking_control}
consists of ``approximating'' Problems~\eqref{eq:sol_Td}-\eqref{eq:sol_Tu}.
More precisely, we proceed in two steps: we first 
introduce a stopped chain 
$Z^n$
and, in the second step, the stopped process
$R^{K,n}(V)$.

To this end
let 
$U_n\subset\bbR^d$,
$n\in\bbN$,
be a family of compact subsets 
such that 
$\cup_{n\in\bbN}U_n=\bbR^d$
and
$U_n\subset  U^\circ_{n+1}$,
for all 
$n\in\bbN$,
where
$U^\circ_{n+1}$
denotes the interior 
of
$U_{n+1}$
in 
$\bbR^d$.
For each 
$n\in\bbN$,
define a stopping time 
$\tau_n$
and the stopped $(\cF_t)$-Markov chain
$Z^n$
by
\begin{eqnarray}
\label{eq:Stopped_Chain}
Z^n_t:= Z_{t\wedge\tau_n},& &
\text{where}\qquad
\tau_n:=\inf\{t\geq0\,:\,Z_t\in\bbE\setminus U_n\}
\qquad (\inf \emptyset = \infty).
\end{eqnarray}
Hence,
$Z^n$
is an $(\cF_t)$-Markov chain with the state space
$\bbE$
and
a $Q$-matrix 
$Q_n$
given by
\begin{eqnarray}
\label{eq:Stopped_Chain_Gen}
Q_n(z,z') & = & I_{U_n}(z) Q(z,z'),\qquad z,z'\in\bbE,
\end{eqnarray}
where
$I_{\{\cdot\}}$
denotes the indicator function.
In particular, 
since 
$U_n$
is compact and hence
$U_n\cap\bbE$
must be finite 
by~\eqref{eq:Z_Chain_Assumption},
$Q_n$
satisfies assumption~\eqref{eq:sup_Assum_Q}  
in Lemma~\ref{lem:Q_mart}.
Since the chain 
$Z$
has %no instantaneous states by assumption,
c\`adl\`ag paths,
the sequence of positive random variables 
$(\tau_n)_{n\in\bbN}$
is non-decreasing and the following holds
\begin{equation*}
\tau_\infty:=\lim_{n\to\infty}\tau_n=\infty\qquad \text{$\PP_z$-a.s.}\qquad\text{for any $z\in\bbE$.}
\end{equation*}
Hence, we can extend the definition in~\eqref{eq:Stopped_Chain}
in a natural way to the case
$n=\infty$
by
$Z^\infty:=Z$.

Fix a large
$K>0$
and 
%in Lemma~\ref{lem:R^C_rep}
%and 
%$K>0$,
define,
for any  
$V\in\cV$,
the stopping time
$$
\tau^K(V):=\inf\{t\geq0:|R_t(V)|\geq K\} \qquad (\inf \emptyset = \infty),
$$
where 
$R(V)$
is given in~\eqref{eq:R^V_Rep}.
The stopped process of interest
$R^{K,n}(V)=(R^{K,n}_t(V))_{t\geq0}$
can now be defined by
\begin{equation}
\label{eq:Def_process_K_n}
R^{K,n}_t(V):= R_{t\wedge\tau_n\wedge \tau^K(V)}(V).  
\end{equation}
%We first establish the Markov property 
%for the processes 
%$(R^K(V^I),Z)$
%and
%$(R^K(V^{II}),Z)$.
%This simple but important fact will
%allow us to express the Bellman process
%in Problems~\eqref{eq:sol_Td}
%and~\eqref{eq:sol_Tu}
%as a deterministic function of the current 
%level of the process
%$(R(V),Z)$. 

For 
given
$\phi$
satisfying~\eqref{eq:Ass_phi},
$T>0$
%as in 
%Section~\ref{sec:i}
and any
$K\in(0,\infty)$
and
$n\in\bbN\cup\{\infty\}$,
consider the problems
\begin{eqnarray}
\label{eq:sol_Td_K}
%\text{\textbf{(Ia)}}^K\hspace{5mm} 
  \text{minimise}\quad \EE\left[\phi(R^{K,n}_T(V))\right]\quad\text{over}\quad V\in\cV, &  & \\
\label{eq:sol_Tu_K}
%\text{\textbf{(Ib)}}^K\hspace{5mm} 
\text{maximise}\quad
\EE\left[\phi(R^{K,n}_{T}(V))\right]\quad\text{over}\quad V\in\cV. & &
\end{eqnarray}
By Lemma~\ref{lem:_Zand_W_indep},
the processes
$(R(V^I),Z)$
and
$(R(V^{II}),Z)$
are Markov. 
%and hence so are the stopped process
%$(R^{K,n}(V^I),Z)$
%and
%$(R^{K,n}(V^{II}),Z)$.
%In the case of the Brownian motion
%$V^I$
%defined in 
%Theorem~\ref{thm:tracking},
%the stochastic correlation process
%in Lemma~\ref{lem:R^V_rep}
%takes the form
%$C^I_t=\hat c(Z_t)$.
Therefore we can define the candidate value functions
$\psi^{(I)}_{K,n},\psi^{(II)}_{K,n}:\bbR\times\bbE\times[0,T]\to \bbR_+$
%and
%$\psi^{(II)}_K:\bbR\times\bbE\to \bbR_+$
for Problems~\eqref{eq:sol_Td_K}
and~\eqref{eq:sol_Tu_K}
by
\begin{eqnarray}
\label{eq:Val_Fun_T}
\psi_{K,n}^{(I)}(r,z,t):=\EE_{r,z}\left[\phi(R^{K,n}_t(V^I))\right]
\qquad\text{and}\qquad 
\psi_{K,n}^{(II)}(r,z,t):=\EE_{r,z}\left[\phi(R^{K,n}_t(V^{II}))\right],
%\psi_K^{(II)}(r,z)&:=&\EE_{r,z}\left[\phi(R^K_{e_q}(V^I))\right]
%=\EE_{r,z}\left[q\int_0^\infty\te^{-qt}\phi(R^K_t(V^I))\,\dd t\right].
%\label{eq:Val_Fun_IK}
\end{eqnarray}
respectively.
Note that by definition we have
$\psi_{K,n}^{(I)}(r,z,t)=\psi_{K,n}^{(II)}(r,z,t)=\phi(r)$
if 
$r\in\bbR\setminus(-K,K)$
or 
$z\in\bbR\setminus U_n$.

\begin{lemma}
\label{lem:Key_Props_I}
Assume that
$\phi$,
given in~\eqref{eq:Ass_phi},
is bounded from below and
$\phi\in\cC^2(\bbR)$.
For any
$K\in(0,\infty)$
and
$n\in\bbN\cup\{\infty\}$,
the functions 
$\psi_{K,n}^{(I)}$
and
$\psi_{K,n}^{(II)}$,
defined in~\eqref{eq:Val_Fun_T},
have the following properties.
%be a continuous-time Markov chain with the 
%on the state space 
%$\bbE$.
%The followign properties hold.
\begin{enumerate}[(i)]
\item 
\label{Lem:Key_Props_I_i}
For all 
$r\in\bbR$,
$z\in\bbE$
and
$t\in[0,T]$,
there exists a constant 
$\ell\in\bbR$,
such that 
%we have
$$
\ell\leq \psi_{K,n}^{(I)}(r,z,t),
\psi_{K,n}^{(II)}(r,z,t)
\leq \max\{\phi(\max\{K,r\}),\phi(\min\{-K,r\})\}.
$$
\item For each 
$z\in\bbE$ we have
$\psi_{K,n}^{(I)}(\cdot,z,\cdot), \psi_{K,n}^{(II)}(\cdot,z,\cdot)\in\cC^{2,1}(\bbR\times(0,T])$.
%and
%$\psi_K^{(II)}(\cdot,z)\in\cC^{2}(\bbR)$.
\label{Lem:Key_Props_I_ii}
\item 
\label{Lem:Key_Props_I_iii}
For any
$r\in\bbR$,
$z\in\bbE$
and
$t\in(0,T]$,
the derivatives satisfy the
following 
inequalities:
\begin{eqnarray}
\label{eq:Firs_Der_Bound_I}
\left|\frac{\partial \psi_{K,n}^{(I)}}{\partial r}\right|(r,z,t),\>\>
\left|\frac{\partial \psi_{K,n}^{(II)}}{\partial r}\right|(r,z,t)
& \leq & \max\{\phi'(\max\{K,r\}),-\phi'(\min\{-K,r\})\},\\
\frac{\partial^2 \psi_{K,n}^{(I)}}{\partial r^2}(r,z,t),\>\>
\frac{\partial^2 \psi_{K,n}^{(II)}}{\partial r^2}(r,z,t) & \geq & 0.
\label{eq:Sec_Der_Bound_I}
%\geq 0;\\
%\leq \max\{\phi'(K),-\phi'(-K),1\} & \text{and}& 
\end{eqnarray}
\end{enumerate}
\end{lemma}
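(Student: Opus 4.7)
The plan is to reduce parts~(ii) and~(iii) to a PDE analysis via the Kolmogorov backward equation in the finite-state case $n<\infty$, and then pass to the limit $n\to\infty$. Part~(i) is immediate: set $\ell:=\inf_{\bbR}\phi>-\infty$ for the lower bound; for the upper bound, when $|r|\leq K$ the stopped process $R^{K,n}_t(V^I)$ takes values in $[-K,K]$, so convexity of $\phi$ gives $\phi(R^{K,n}_t(V^I))\leq\max\{\phi(K),\phi(-K)\}$, while for $|r|>K$ we have $\tau^K(V^I)=0$ and hence $R^{K,n}_t(V^I)\equiv r$. These cases combine to the stated upper bound, and the same argument applies to $V^{II}$.

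For~(ii) and~(iii) with $n<\infty$, Lemma~\ref{lem:_Zand_W_indep} yields the Markov property of $(R(V^I),Z)$, and $U_n\cap\bbE$ being finite makes $Q_n$ bounded. Hence $\psi^{(I)}_{K,n}$ satisfies the Kolmogorov backward equation $\partial_t\psi=\cL^{\hat c_I(z)}\psi$ on $(-K,K)\times(U_n\cap\bbE)\times(0,T]$, with $\phi$ as boundary and initial data. This is a finite system of linear parabolic equations in $(r,t)$ whose coefficients depend on $z$ but are independent of $r$. Classical Schauder regularity (using $\phi\in\cC^2$) yields interior $\cC^{2,1}$ regularity of $\psi^{(I)}_{K,n}$; combined with the smoothness of the exterior value $\phi\in\cC^2(\bbR)$ and the $r$-independence of the coefficients, this extends to global $\cC^{2,1}$ regularity across $r=\pm K$, proving~(ii). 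The same reasoning handles $\psi^{(II)}_{K,n}$.

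For~(iii), the $r$-independence of the coefficients implies that $\partial_r\psi$ and $\partial_r^2\psi$ satisfy the same linear parabolic system as $\psi$. On the parabolic boundary ($t=0$ or $r=\pm K$) they equal $\phi'$ and $\phi''$ respectively. Convexity gives $\phi''\geq0$, and the parabolic minimum principle yields $\partial_r^2\psi\geq0$ throughout; monotonicity of $\phi'$ bounds $|\phi'|$ on the parabolic boundary by $\max\{\phi'(K),-\phi'(-K)\}$, and the parabolic maximum principle applied to $\pm\partial_r\psi$ gives the first-derivative bound for $r\in[-K,K]$, with the case $|r|>K$ being immediate since $\psi=\phi$ there and a short monotonicity check for $\phi'$ verifying the stated formula. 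The case $n=\infty$ is then obtained by approximation: $\tau_n\uparrow\infty$ a.s., path continuity of $R(V^I)$, and the uniform bound in~(i) yield dominated convergence $\psi^{(I)}_{K,n}\to\psi^{(I)}_{K,\infty}$ pointwise, and the uniform derivative bounds combined with a standard compactness (Arzel\`a--Ascoli) argument transfer regularity and bounds to the limit.

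The main technical difficulty is the global $\cC^{2,1}$ regularity across the absorbing boundary $r=\pm K$: interior parabolic regularity is classical, but smooth $\cC^2$ matching to the exterior data $\phi$ is delicate and crucially uses the $r$-independence of the coefficients. A secondary concern is the possible degeneracy $(|\sigma_1|-|\sigma_2|)^2(z)=0$ in the equation associated with $V^I$, which may necessitate a vanishing-viscosity or approximation step for the parabolic maximum principle to apply to the coupled system.
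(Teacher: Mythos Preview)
Your approach is quite different from the paper's and, as written, contains a structural gap. The paper argues probabilistically and directly: writing $R^{K,n}_t(V^I)=r+S$ with $S$ the stochastic-integral increment, it differentiates $\psi^{(I)}_{K,n}(r,z,t)=\EE_{r,z}[\phi(r+S)]$ under the expectation via Lagrange's mean value theorem and dominated convergence (using only $\phi\in\cC^2$ and the a.s.\ bound $|S|\le K$), obtaining the explicit formulae
\[
\frac{\partial\psi^{(I)}_{K,n}}{\partial r}(r,z,t)=\EE_{r,z}\big[\phi'(R^{K,n}_t(V^I))\big],\qquad
\frac{\partial^2\psi^{(I)}_{K,n}}{\partial r^2}(r,z,t)=\EE_{r,z}\big[\phi''(R^{K,n}_t(V^I))\big].
\]
All of~(iii) is then immediate from the monotonicity of $\phi'$, the non-negativity of $\phi''$, and the fact that $R^{K,n}_t\in[\min\{-K,r\},\max\{K,r\}]$. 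No PDE theory, no maximum principle, and no separate treatment of $n=\infty$ are needed; indeed, in the paper's logical order this lemma \emph{precedes} the HJB equation (Lemma~\ref{lem:HJB_I}), precisely because the $\cC^{2,1}$ smoothness is required as input to apply It\^o's formula and derive the PDE.

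Your proposal inverts this order and thereby becomes circular: you assert that $\psi^{(I)}_{K,n}$ satisfies the Kolmogorov backward equation and then invoke Schauder theory to extract $\cC^{2,1}$ regularity, but verifying that the probabilistically defined $\psi$ is a classical solution already presupposes the smoothness you are trying to establish. The standard repair---first construct a classical solution of the parabolic system, then identify it with $\psi$ via It\^o and uniqueness---is not carried out, and it is exactly at this step that the degeneracy $(|\sigma_1|-|\sigma_2|)^2(z)=0$ you flag would obstruct existence theory. The further difficulties you correctly identify (smooth matching across $r=\pm K$, and the Arzel\`a--Ascoli passage to $n=\infty$) are genuine and nontrivial, whereas the paper's direct computation bypasses all of them at once.
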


\begin{proof}
Part~\eqref{Lem:Key_Props_I_i}
follows from~\eqref{eq:Val_Fun_T}
and the properties of 
$\phi$.
To prove that  
$\psi_{K,n}^{(I)}$
is differentiable in $r$,
define 
$S:=R^{K,n}_t(V^I)- R^{K,n}_0(V^I)$
and note that its distribution does not depend on the starting
point of $R^{K,n}(V^I)$.
Since 
$\phi \in\cC^2(\bbR)$,
Lagrange's mean value theorem implies that, 
for any small
$h>0$,
there exists a random variable 
$\xi_{S,h}$
such that 
\begin{eqnarray}
\label{eq:Lagrange}
\phi(r+h+S)-\phi(r+S)=h\phi'(r+\xi_{S,h})\qquad\text{and}\qquad
\xi_{S,h}\in(S,h+S).
\end{eqnarray}
Since 
$|S|\leq K$
almost surely and 
$r$
is fixed, the continuity of 
$\phi'$
yields that the random variable 
$|\phi'(r+\xi_{S,h})|$
is bounded above by a constant. 
Equation~\eqref{eq:Lagrange},
almost sure convergence 
of
$\xi_{S,h}$
to
$S$,
as
$h\to0$,
and the Dominated Convergence Theorem imply that 
$\psi_{K,n}^{(I)}(\cdot,z,t)$
is differentiable in 
$r$
and
\begin{eqnarray}
\label{eq:psi_I_prime}
\frac{\partial \psi_{K,n}^{(I)}}{\partial r}(r,z,t)= \EE_{r,z}\left[\phi'(R^{K,n}_t(V^I))\right].
\end{eqnarray}
Furthermore, the convexity of  
$\phi$
and~\eqref{eq:psi_I_prime}
yield the first inequality in~\eqref{eq:Firs_Der_Bound_I}.
An identical argument applied to the function	
%completely analogous argument implies that
$\psi_{K,n}^{(II)}(\cdot,z,t)$
implies its
differentiability in 
$r$
and yields~\eqref{eq:Firs_Der_Bound_I}. 
%the formula
%\begin{eqnarray}
%\label{eq:psi_II_prime}
%\frac{\partial \psi_K^{(II)}}{\partial r}(r,z)= \EE_{r,z}\left[\phi'(R^K_{t}(V^{II}))\right],
%\end{eqnarray}
%which in turn proves~\eqref{eq:Firs_Der_Bound_I}.

Since 
$\phi''$
is continuous by assumption, we can apply an analogous argument to 
the one above, 
now using formula~\eqref{eq:psi_I_prime} %--\eqref{eq:psi_II_prime}
instead of~\eqref{eq:Val_Fun_T},
to conclude that the functions
$\psi_{K,n}^{(I)}(\cdot,z,t)$
and
$\psi_{K,n}^{(II)}(\cdot,z,t)$
are in
$\cC^2(\bbR)$
with 
$$
\frac{\partial^2 \psi_{K,n}^{(I)}}{\partial r^2}(r,z,t)= \EE_{r,z}\left[\phi''(R^{K,n}_t(V^I))\right],\qquad
\frac{\partial^2 \psi_{K,n}^{(II)}}{\partial r^2}(r,z,t)= \EE_{r,z}\left[\phi''(R^{K,n}_{t}(V^{II}))\right].
$$
The convexity of  
$\phi$
now implies part~\eqref{Lem:Key_Props_I_iii}
of the lemma.
Differentiability of  
$\psi_{K,n}^{(I)}(r,z,\cdot)$
in 
$t$
follows from the smoothness of
$\phi$
and the standard properties of It\^o integrals.
\end{proof}

Pick a function
$F:\bbR\times\bbE\times[0,T)\to\bbR$
such that 
$F(\cdot,z,\cdot)\in\cC^{2,1}(\bbR\times[0,T))$
for each 
$z\in\bbE$,
and
for each
$r\in\bbR$,
$t\in[0,T)$
the restriction to the second coordinate
$F(r,\cdot,t):\bbE\to\bbR$
is bounded.
Then for any constant
$c\in[-1,1]$
we define the function
$\cK^cF:\bbR\times\bbE\times[0,T)\to\bbR$
by the formula:
\begin{eqnarray*}
(\cK^cF)(r,z,t) & = & (\cL^c F(\cdot,\cdot,t))(r,z) +
\frac{\partial F}{\partial t}(r,z,t),
\end{eqnarray*}
where the operator  
$\cL^c$
is as defined in~\eqref{eq:Def_cL}.

\begin{lemma}[HJB equation]
\label{lem:HJB_I}
Let
$\phi$
in~\eqref{eq:Ass_phi}
be bounded from below and
satisfy
$\phi\in\cC^2(\bbR)$.
Let
$n\in\bbN$
and
$K\in(0,\infty)$.
Then the functions 
$$F^{(I)}(r,z,t):=\psi_{K,n}^{(I)}(r,z,T-t)\qquad\text{and}\qquad
F^{(II)}(r,z,t):=\psi_{K,n}^{(II)}(r,z,T-t),$$
(see~\eqref{eq:Val_Fun_T}
for the definition of 
$\psi_{K,n}^{(I)}$
and
$\psi_{K,n}^{(II)}$)
satisfy the HJB equations:

for any triplet 
$(r,z,t)\in(-K,K)\times\left(\bbE\cap U_n\right)\times[0,T)$
(see~\eqref{eq:Stopped_Chain} for the role of the set
$U_n$)
we have
\begin{eqnarray}
\label{eq:HJB_IK}
\inf_{c\in[-1,1]}\left(\cK^c F^{(I)}\right)(r,z,t) & = & 0,\\
%\qquad \text{on $(-K,K)\times\bbE\times[0,T)$ with
%$F^{(I)}(r,z,T)=\phi(r)$},\\
\sup_{c\in[-1,1]}\left(\cK^c F^{(II)}\right)(r,z,t) & = & 0.
%\qquad \text{on $(-K,K)\times\bbE\times[0,T)$ with
%$F^{(II)}(r,z,T)=\phi(r)$}.
\label{eq:HJB_IIK}
\end{eqnarray}
Furthermore,  if
at least one of the conditions
$|r|\geq K$
or
$z\in\bbE\setminus U_n$
or
$t=T$
is satisfied,
we have
\begin{eqnarray}
\label{eq:Tracking_Boundary_Beh}
F^{(I)}(r,z,t)=F^{(II)}(r,z,t)=\phi(r).
\end{eqnarray}
\end{lemma}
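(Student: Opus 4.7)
The plan is to verify the HJB equations via a martingale argument rooted in the Markov property of the stopped processes, and then to use the convex structure of $\psi^{(I,II)}_{K,n}$ to identify the extremal control.

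The boundary condition \eqref{eq:Tracking_Boundary_Beh} is immediate from the construction. If $|r| \geq K$ then $\tau^K(V) = 0$; if $z \in \bbE \setminus U_n$ then $\tau_n = 0$; and $F^{(I)}(r, z, T) = \psi^{(I)}_{K,n}(r, z, 0)$. In each of these three cases $R^{K,n}_{T-t}(V^{I}) = r$ almost surely, so the expectation evaluates to $\phi(r)$, and identically for $V^{II}$.

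For the interior PDE, by Lemma \ref{lem:_Zand_W_indep} the chain $Z$ is independent of $B$, so the pair $(R^{K,n}(V^I), Z^n)$ is an $(\cF_t)$-Markov process. Hence
$$M_s := F^{(I)}(R^{K,n}_s(V^I), Z^n_s, s) = \EE\!\left[\phi(R^{K,n}_T(V^I)) \,\big|\, \cF_s\right], \qquad s\in[0,T],$$
is a bounded $(\cF_s)$-martingale (the bound uses Lemma \ref{lem:Key_Props_I}(\ref{Lem:Key_Props_I_i}) together with $|R^{K,n}| \leq K \vee |r|$ and the finiteness of $\bbE \cap U_n$). Next, I would apply It\^o's formula for semimartingales to $M_s$, exploiting: $F^{(I)}(\cdot, z, \cdot) \in \cC^{2,1}$ from Lemma \ref{lem:Key_Props_I}(\ref{Lem:Key_Props_I_ii}); the identity $V^I = \int \hat c_I(Z) \, \dd B$ which implies $R^{K,n}(V^I)$ is continuous with $\dd[R^{K,n},R^{K,n}]_s = (\sigma_1 - \hat c_I \sigma_2)^2(Z^n_s)\,\mathbf{1}_{\{s\leq \tau^K \wedge \tau_n\}}\,\dd s$; and the jump-compensation formula of Lemma \ref{lem:Q_mart} applied with $U_s := R^{K,n}_s(V^I)$ (admissible since the stopped $Q$-matrix $Q_n$ satisfies \eqref{eq:sup_Assum_Q}). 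Collecting the finite-variation pieces shows that the drift of $M$ on the event $\{s < \tau^K(V^I) \wedge \tau_n\}$ equals
$$\bigl(\cK^{\hat c_I(Z^n_s)} F^{(I)}\bigr)(R^{K,n}_s(V^I), Z^n_s, s)\,\dd s.$$
Since $M$ is a martingale, this drift must vanish. Varying the starting point $(r,z) \in (-K,K) \times (\bbE \cap U_n)$ under $\PP_{r,z}$ and using right-continuity in $s$ at $s=0$ yields the pointwise identity $\cK^{\hat c_I(z)} F^{(I)}(r, z, t) = 0$ at every interior point. The same reasoning applied to $V^{II}$ (whose $c$ also takes values in $\{\pm 1\}$, so the $W$-integral vanishes) gives $\cK^{\hat c_{II}(z)} F^{(II)}(r, z, t) = 0$.

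The HJB equations \eqref{eq:HJB_IK} and \eqref{eq:HJB_IIK} then follow from convexity (Lemma \ref{lem:Key_Props_I}(\ref{Lem:Key_Props_I_iii})). Linearity of $\cK^c$ in $c$ yields, for any $c \in [-1,1]$,
$$\bigl(\cK^c F^{(I)} - \cK^{\hat c_I} F^{(I)}\bigr)(r, z, t) = (\hat c_I(z) - c)\,\sigma_1(z)\sigma_2(z)\,\frac{\partial^2 F^{(I)}}{\partial r^2}(r, z, t) \geq 0,$$
because $\hat c_I(z)\sigma_1(z)\sigma_2(z) = |\sigma_1(z)\sigma_2(z)|$ dominates $c\,\sigma_1(z)\sigma_2(z)$, and $\partial^2 F^{(I)}/\partial r^2 \geq 0$. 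Hence $\inf_{c\in[-1,1]} \cK^c F^{(I)} = \cK^{\hat c_I} F^{(I)} = 0$. Since $\hat c_{II} = -\hat c_I$ maximises the same coefficient, the sign of the bracket flips and one obtains $\cK^c F^{(II)} \leq \cK^{\hat c_{II}} F^{(II)} = 0$, proving \eqref{eq:HJB_IIK}.

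The main obstacle I foresee is the technical step of converting ``$M$ is a martingale'' into a pointwise PDE for the drift: one must localise on $\{s < \tau^K \wedge \tau_n\}$, apply It\^o's formula with jumps correctly to a function depending on both the continuous coordinate $r$ and the jumping coordinate $z$, and verify that the Lemma \ref{lem:Q_mart} compensator combines cleanly with the continuous It\^o drift to produce exactly $\cK^{\hat c_I}$. Once the pointwise identity is established, the HJB comparison is a short algebraic step.
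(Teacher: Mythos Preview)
Your proposal is correct and follows essentially the same route as the paper: establish the martingale representation of $F^{(I)}(R^{K,n}_s(V^I),Z^n_s,s)$ via the Markov property, apply It\^o's formula together with Lemma~\ref{lem:Q_mart} to isolate the finite-variation part, conclude it vanishes, and then use the convexity inequality $\partial^2_{rr}\psi^{(I)}_{K,n}\geq 0$ from Lemma~\ref{lem:Key_Props_I} to identify $\hat c_I$ (resp.\ $\hat c_{II}$) as the minimiser (resp.\ maximiser) of $c\mapsto \cK^c$. Your algebraic phrasing of the last step as $(\hat c_I(z)-c)\sigma_1(z)\sigma_2(z)\,\partial^2_{rr}F^{(I)}\geq 0$ is exactly equivalent to the paper's observation that $(|\sigma_1|-|\sigma_2|)^2=\inf_{c\in[-1,1]}(\sigma_1^2-2c\sigma_1\sigma_2+\sigma_2^2)$.

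The one place where the paper is more careful is the passage from ``the drift of $M$ vanishes'' to ``$\cK^{\hat c_I(z)}F^{(I)}(r,z,t)=0$ at \emph{every} interior point''. Your argument (vary the starting point and use right-continuity at $s=0$) delivers the identity only at $t=0$; to reach $t\in(0,T)$ the paper instead notes that the continuous finite-variation martingale $N$ is identically zero along the \emph{entire} path and then invokes irreducibility of $Z$ (assumption~\eqref{eq:Z_Chain_Assumption}) to argue that $(R^{K,n}(V^I),Z)$ visits a neighbourhood of any interior $(r,z)$ on $[0,T\wedge\tau_n\wedge\tau^K]$ with positive probability, forcing the integrand to vanish at every $(r,z,t)$. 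This is easily patched in your write-up, but it is the step you flagged as the ``main obstacle'', and it does need the irreducibility hypothesis rather than just right-continuity.
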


\begin{remark}
Unlike Lemma~\ref{lem:Key_Props_I},
the proof of Lemma~\ref{lem:HJB_I}
depends on Lemma~\ref{lem:Q_mart}
and so requires the assumption 
$n<\infty$.
\end{remark}

\begin{proof}
Note first that the definitions in~\eqref{eq:Val_Fun_T} 
imply the boundary behaviour stated in~\eqref{eq:Tracking_Boundary_Beh}.

We now focus on the proof of~\eqref{eq:HJB_IK}.
Recall that 
for any starting point 
$z\in\bbE$
and 
$t\in[0,T)$,
on the event
$\{\tau_n\geq t\}$
we have
$Z^n_t=Z_t$.
The Markov property of the process
$(R(V^I),Z)$
and the equality in~\eqref{eq:Tracking_Boundary_Beh}
now imply
\begin{eqnarray*}
\EE\left[\phi(R^{K,n}_T(V^{I}))\vert \cF_t\right] 
& = & 
\EE\left[\phi(R^{K,n}_T(V^{I}))I_{\{\tau_n<t\}} \vert \cF_t\right] +
\EE\left[\phi(R^{K,n}_T(V^{I}))I_{\{\tau_n\geq t\}} \vert \cF_t\right] \\
& = & \phi(R^{K,n}_{\tau_n}(V^{I}))I_{\{\tau_n<t\}}  +
\psi_{K,n}^{(I)}(R^{K,n}_t(V^{I}),Z^n_t,T-t)I_{\{\tau_n\geq t\}} \\
& = & 
\psi_{K,n}^{(I)}(R^{K,n}_t(V^{I}),Z^n_t,T-t).
\end{eqnarray*}
The following observations are key:
\begin{enumerate}
\item[$\bullet$] the quadratic covariation
$[R^{K,n}(V^I),Z^{n,i}]_t$
vanishes for all $t\geq0$
and
$i=1,\ldots,d$,
where
$Z^{n,i}$
is the 
$i$-th component of 
$Z^n$
(recall that we are assuming 
$\bbE\subset\bbR^d$);
\item[$\bullet$]
the chain 
$Z^n$
satisfies the assumptions
of Lemma~\ref{lem:Q_mart}
and hence the process 
$M^{U}=(M^{U}_t)_{t\in[0,T]}$, 
given by 
\begin{eqnarray*} 
M^{U}_t & := & \sum_{0<s\leq t}\left[\psi_{K,n}^{(I)}(R^{K,n}_s(V^I),Z^n_s,T-s)-\psi_{K,n}^{(I)}(R^{K,n}_s(V^I),Z^n_{s-},T-s)\right]\\
& & -\int_0^t(Q_n\psi_{K,n}^{(I)}(R^{K,n}_s(V^I),\cdot,T-s))(Z^n_{s-})\,\dd s,
\end{eqnarray*}
where 
$Q_n$
is the generator of the chain
$Z^n$
given in~\eqref{eq:Stopped_Chain_Gen},
is a true $(\cF_t,\PP_z)$-martingale
for any starting point
$z\in\bbE$.
\end{enumerate}
By Lemma~\ref{lem:Key_Props_I},
the function 
$\psi_{K,n}^{(I)}$
possesses the necessary smoothness so that 
It\^o's lemma for general semimartingales~\cite[Sec~II.7,~Thm.~33]{Protter:05}
can be applied to the process 
$(\psi_{K,n}^{(I)}(R^{K,n}_t(V^{I}),Z^n_t,T-t))_{t\in[0,T]}$,
which is itself a bounded martingale.
Since 
$Q_n(z,z')=Q(z,z')$
for any
$z\in\bbE\cap U_n$,
$z'\in\bbE$
and
on the event
$\{t\leq\tau_n\}$
we have
$Z_t=Z^n_t\in U_n$,
the pathwise representation of this bounded martingale 
implies that the following process
$N=(N_t)_{t\in[0,T]}$,
\begin{eqnarray*}
N_t & = & \int_0^{t\wedge \tau_n\wedge\tau^K(V)}\left[\frac{1}{2}(|\sigma_1|-|\sigma_2|)^2(Z_s) 
\frac{\partial^2 \psi_{K,n}^{(I)}}{\partial r^2}(R^{K,n}_s(V^I),Z_s,T-s)\right.\\
&  & \qquad + \left. (Q\psi_{K,n}^{(I)}(R^{K,n}_s(V^I),\cdot,T-s))(Z_s)-
\frac{\partial \psi_{K,n}^{(I)}}{\partial t}(R^{K,n}_s(V^I),Z_s,T-s)
\right]\,\dd s, 
\end{eqnarray*}
is a continuous martingale. 
The quadratic variation of
$N$
is clearly equal to zero
and hence
$N_t=0$
for all 
$t\in[0,T]$
and starting
points
$(r,z)$.
For any
$z\in\bbE\cap U_n$
we have 
$\PP_z[Z_t=z,\> \forall t\leq T]>0$.
On this event the following holds:
$\tau_n\geq T$
$\PP_z$-a.s.
and the process
$R^{K,n}(V^{I})$
is by~\eqref{eq:Def_VI_VII},~\eqref{eq:R^V_Rep}
and~\eqref{eq:Def_process_K_n}  
either  equal to the constant 
$r$
(if $\sigma_1(z)=\sigma_2(z)$)
or a Brownian motion stopped when it exits 
$(-K,K)$.
Since,  
with positive probability,
Brownian motion visits a neighbourhood of any
point in 
$(-K,K)$
and stays in this interval until time 
$T$,
the fact that 
$N_t=0$
for all 
$t\in[0,T]$
and starting
points
$(r,z)$
implies
%the equality 
\begin{eqnarray}
\label{eq:PDE_HJB_Tracking_I}
& &
\frac{1}{2}(|\sigma_1|-|\sigma_2|)^2(z)
\frac{\partial^2 \psi_{K,n}^{(I)}}{\partial r^2}(r,z,T-t)
+(Q\psi_{K,n}^{(I)} (r,\cdot,T-t))(z)-
\frac{\partial\psi_{K,n}^{(I)}}{\partial t}(r,z,T-t)  =  0
\end{eqnarray}
for all 
$(r,z,t)\in(-K,K)\times\left(\bbE\cap U_n\right)\times[0,T)$.

To prove~\eqref{eq:HJB_IK}, 
observe that 
$(|\sigma_1|-|\sigma_2|)^2 =
 \inf_{c\in[-1,1]} (\sigma_1^2-2c\sigma_1\sigma_2+\sigma_2^2)$.
Then~\eqref{eq:Sec_Der_Bound_I}
of Lemma~\ref{lem:Key_Props_I}
implies that 
\begin{eqnarray*}
(\sigma_1^2-2c\sigma_1\sigma_2+\sigma_2^2)(z)
\frac{\partial^2\psi_{K,n}^{(I)}}{\partial r^2}(r,z,T-t) 
& \geq &
(|\sigma_1|-|\sigma_2|)^2(z) 
\frac{\partial^2\psi_{K,n}^{(I)}}{\partial r^2}(r,z,T-t)
\end{eqnarray*}
for any
$c\in[-1,1]$
and each 
$(r,z,t)\in(-K,K)\times\left(\bbE\cap U_n\right)\times[0,T)$.
This inequality
%the definition of
%$\cL^c\zeta^{(II)}$
%in~\eqref{eq:Def_cL}
and identity~\eqref{eq:PDE_HJB_Tracking_I}
imply~\eqref{eq:HJB_IK}.
The proof of~\eqref{eq:HJB_IIK}
is analogous and therefore left to the reader.
\end{proof}

\subsection{Proof of Theorem~\ref{thm:tracking_control}} 
\label{subsec:Proof_TRacking}
Assume that 
$\phi$
satisfies condition~\eqref{eq:Ass_phi}
as well as
\begin{eqnarray}
\label{eq:Add_Cond_on_phi}
%\exists \ell\in\bbR\text{ such that } 
\ell \leq \phi(x) \text{ $\forall x\in\bbR$, $\ell\in\bbR$,} \qquad\text{and}\qquad
\phi\in\cC^2(\bbR).
\end{eqnarray}
%twice continuously differentiable
Pick 
$V\in\cV$
and,
for any
$t\in[0,T]$,
define Brownian motions
$V^{It}=(V_s^{It})_{s\geq0}\in\cV$ and $V^{IIt}=(V_s^{IIt})_{s\geq0}\in\cV$
by
\begin{eqnarray}
\label{eq:Bellman_BMs}
V_s^{It}  :=  \left\{ \begin{array}{ll}
V_s  & \textrm{ if $s\leq t$,}\\
V_t + V^I_{s}-V^I_{t} & \textrm{ if $s>t$,}
\end{array} \right.
&\text{ and } &
V_s^{IIt}  :=  \left\{ \begin{array}{ll}
V_s  & \textrm{ if $s\leq t$,}\\
V_t + V^{II}_{s}-V^{II}_{t} & \textrm{ if $s>t$,}
\end{array} \right.
\end{eqnarray}
where 
$V^I,V^{II}$
are given in~\eqref{eq:Def_VI_VII}.
In other words, for each 
$t\geq0$,
the Brownian motions 
$V^{It}$ and $V^{IIt}$
are arbitrary (but fixed) up to time
$t$  
and have increments 
equal to those of the candidate optimal 
Brownian motions after this time. 
We now consider two Bellman processes
$(B^I_t(V))_{t\in[0,T]}$
and
$(B^{II}_t(V))_{t\in[0,T]}$,
associated to Problems~\eqref{eq:sol_Td_K}-\eqref{eq:sol_Tu_K},
given by
\begin{eqnarray}
\label{eq:Bellman_processes_TRacking}
B^I_t(V) := \psi_{K,n}^{(I)}(R^{K,n}_t(V),Z^n_t,T-t)
&\text{and} &
B^{II}_t(V) := \psi_{K,n}^{(II)}(R^{K,n}_t(V),Z^n_t,T-t).
\end{eqnarray}

The definitions in~\eqref{eq:Def_VI_VII}
of
$V^I,V^{II}$,
together with Lemma~\ref{lem:_Zand_W_indep},
imply that the processes
$(R(V^I),Z)$
and
$(R(V^{II}),Z)$
are Markov. 
The definition of the Brownian motion
$V^{It}$ %and $V^{IIt}$
in~\eqref{eq:Bellman_BMs}
and the properties of the function
$\psi_{K,n}^{(I)}$
therefore imply
\begin{eqnarray*}
\EE\left[\phi(R^{K,n}_T(V^{It}))\vert \cF_t\right] 
& = & 
\EE\left[\phi(R^{K,n}_T(V^{It}))I_{\{\tau_n<t\}} \vert \cF_t\right] +
\EE\left[\phi(R^{K,n}_T(V^{It}))I_{\{\tau_n\geq t\}} \vert \cF_t\right] \\
& = & \phi(R^{K,n}_{\tau_n}(V))I_{\{\tau_n<t\}}  +
\psi_{K,n}^{(I)}(R^{K,n}_t(V),Z^n_t,T-t)I_{\{\tau_n\geq t\}} \\
& = & 
\psi_{K,n}^{(I)}(R^{K,n}_t(V),Z^n_t,T-t).
\end{eqnarray*}
This equality, together with a 
similar argument based on the definitions
of
$V^{IIt}$ 
and 
$\psi_{K,n}^{(II)}$,
yields the following representations 
for the Bellman processes
\begin{eqnarray*}
%\label{eq:Bellman_processes_TRacking_Markov}
B^I_t(V) = \EE\left[\phi(R^{K,n}_T(V^{It}))\vert \cF_t\right]
&\text{and} &
B^{II}_t(V) = \EE\left[\phi(R^{K,n}_T(V^{IIt}))\vert \cF_t\right].
%B^I_t(V) = \psi_{K,n}^{(I)}(R^{K,n}_t(V),Z^n_t,T-t)
%&\text{and} &
%B^{II}_t(V) = \psi_{K,n}^{(II)}(R^{K,n}_t(V),Z^n_t,T-t).
\end{eqnarray*}
By Lemma~\ref{lem:Key_Props_I}
we can apply It\^o's formula for general semimartingales 
(see~\cite[Sec~II.7,~Thm.~33]{Protter:05}) to 
$B^I(V)$
and
$B^{II}(V)$.
Lemma~\ref{lem:Q_mart}
and inequalities~\eqref{eq:Firs_Der_Bound_I}
imply that the local martingale parts of these path
decompositions of processes
$B^I(V)$
and
$B^{II}(V)$
are true martingales. 
Therefore,
the fact that the quadratic covariation
$[R^{K,n}(V^{It}),Z^{n,i}]_t$
vanishes for all $t\geq0$
for
each component 
$Z^{n,i}$
of
$Z^n$,
together with 
Lemma~\ref{lem:HJB_I},
implies that,
for 
any
$V\in\cV$,
$B^I(V)$
is a submartingale and
$B^{II}(V)$
a supermartingale. 
Furthermore it follows from
the discussion above and
Lemma~\ref{lem:HJB_I}
that
$B^I(V^{I})$
and
$B^{II}(V^{II})$
are martingales.
This establishes the Bellman principle and solves
the optimisation problems in~\eqref{eq:sol_Td_K}
and~\eqref{eq:sol_Tu_K}.
Put differently, we have established the following inequalities 
for 
%all
%$V\in\cV$,
any starting points 
$r\in\bbR$,
$z\in\bbE$,
any
$K\in(0,\infty)$,
$n\in\bbN$
and all Brownian motions 
$V\in\cV$:
\begin{eqnarray}
\label{eq:Inequalities_K_n}
\EE_{r,z}\left[\phi(R^{K,n}_T(V^{I}))\right] \hspace{3mm}
\leq  
\hspace{3mm} \EE_{r,z}\left[\phi(R^{K,n}_T(V))\right] \hspace{3mm}
\leq \hspace{3mm}\EE_{r,z}\left[\phi(R^{K,n}_T(V^{II}))\right]
\end{eqnarray}

The next step in the proof of Theorem~\ref{thm:tracking_control}
requires two limiting arguments.
First, note that 
for any Brownian motion
$V\in\cV$
the definition of the process
$R^{K,n}_T(V)$
in~\eqref{eq:Def_process_K_n}
implies 
$$R^{K,\infty}_T(V)=\lim_{n\uparrow\infty}R^{K,n}_T(V)\quad
\text{$\PP_{r,z}$-a.s.}
$$
for any starting points
$r\in\bbR$
and
$z\in\bbE$.
Furthermore,
by Lemma~\ref{lem:Key_Props_I}~\eqref{Lem:Key_Props_I_i},
the random variables 
$\phi(R^{K,n}_T(V))$
are bounded in modulus by a constant uniformly in
$n\in\bbN$.
Therefore, the Dominated Convergence Theorem implies
that the inequalities in~\eqref{eq:Inequalities_K_n}
hold for 
$n=\infty$.

%for any fixed
%$V\in\cV$,
%$K\in(0,\infty)$
%and starting points 
%$r\in\bbR$,
%$z\in\bbE$.

For the second limiting argument, recall that 
$P$
denotes the semigroup of 
$Z$
and note 
first that the following inequalities hold
for any
$z\in\bbE$,
$t\in[0,T]$ 
and a non-negative function 
$f$:
\begin{eqnarray}
\nonumber
P_Tf(z) & = & \sum_{z'\in\bbE}P_{T-t}(z,z')P_t(z',y)f(y)
\geq P_{T-t}(z,z) P_t f(z)\\
&\geq &
\exp((T-t)Q(z,z)) P_t f(z)\geq
\exp(TQ(z,z)) P_t f(z),
\label{eq:Holding_Time_Estimate}
\end{eqnarray}
since the probability 
$\PP_z[Z_{T-t}=z]=P_{T-t}(z,z)$
%that the chain started at 
%$z$
%is at 
%$z$
%at time $(T-t)$
is greater than the probability that the exponential holding 
time at 
$z$ of the chain $Z$
is bigger than 
$T-t$.
Hence, 
by assumption~\eqref{eq:Int_Assump},
for the function
$f:=|\sigma_1|^p+|\sigma_2|^p:\bbE\to[0,\infty)$
and
$p\in[2,\infty)$ as in~\eqref{eq:Ass_phi}, 
we have
%and the semigroup property imply
%$P_tf(z)<\infty$
%for all
%$t\in[0,T]$,
%$z\in\bbE$,
%where
%$f:=|\sigma_1|^p+|\sigma_2|^p:\bbE\to[0,\infty)$
%and
%$p\in[2,\infty)$. 
%The Kolmogorov backward equation
%%for any 
%%$z\in\bbE$,
%implies for any $z\in\bbE$,
%\begin{eqnarray*}
%(P_Tf)(z)-f(z) &=& \int_0^T\sum_{z'\in\bbE}Q(z,z')(P_tf)(z')\dd t\\ 
%&=&
%\int_0^TQ(z,z)(P_tf)(z)\dd t+ 
%\int_0^T\sum_{z'\in\bbE\setminus\{z\}}Q(z,z')(P_tf)(z')\dd t\\
%& = & 
%\int_0^TQ(z,z)(P_tf)(z)\dd t +
%\sum_{z'\in\bbE\setminus\{z\}}Q(z,z')\int_0^T(P_tf)(z')\dd t\\
%& = &  \sum_{z'\in\bbE}Q(z,z')\int_0^T(P_tf)(z')\dd t,
%%\quad\text{for $z\in\bbE$,}
%\end{eqnarray*}
%where the second equality follows from the following facts:
%$Q(z,z')\in[0,\infty)$
%for all
%$z'\in\bbE\setminus\{z\}$,
%$P_tf(z)\in[0,\infty)$
%for all
%$t\in[0,T]$,
%$z\in\bbE$,
%and Fubini's theorem for non-negative functions.
%This, 
%together with 
%assumption~\eqref{eq:Int_Assump}
%and another application of Fubini's theorem, 
%yields
\begin{eqnarray}
\label{eq:Int_Assump_Chain}
\EE_z \int_0^T\left(|\sigma_1|^p+|\sigma_2|^p\right)(Z_t)\,\dd t<\infty 
\qquad\text{for $z\in\bbE$.}
\end{eqnarray}
Furthermore, it is clear 
from the definition of 
$R^{K,\infty}(V)$,
for any
$V\in\cV$,
that 
$$\lim_{K\to\infty} \phi(R^{K,\infty}_T(V))= \phi(R_T(V))
\quad\text{$\PP_{r,z}$-a.s.}
$$
The following almost sure inequality is 
a direct consequence 
of the definition in~\eqref{eq:Def_process_K_n}
\begin{equation}
\label{eq:S_sup_max}
-S\leq R^{K,\infty}_T(V)\leq S
\qquad\text{for all $K>0$, where $S:=\sup_{t\in[0,T]}|R_t(V)|$.}
\end{equation}
By assumptions~\eqref{eq:Ass_phi}
and~\eqref{eq:Add_Cond_on_phi}
%there exist
%$a,b>0$
%such that
%$|\phi(x)|<a |x|^p+b$
%for all
%$x\in\bbR$,
%and we are assuming that
%for some
%$\ell\in\bbR$
%$\ell\leq \phi(x)$
the following inequalities hold
for some constants $a,b>0$
and
$\ell\in\bbR$:
$$ |\phi(R^{K,\infty}_T(V))|\leq \max\{|\ell|,|\phi(S)|,|\phi(-S)|\}\leq \max\{|\ell|,a |S|^p+b\}
\leq a |S|^p+b+|\ell|.$$
The Burkholder-Davis-Gundy 
inequality~\cite[Thm~IV.4.1]{RevuzYor:99}
applied to the martingale
$R(V)$
at time
$T$,
together with inequality~\eqref{eq:Int_Assump_Chain},
implies that
$|S|^p$
is an integrable random variable.
The Dominated Convergence Theorem
therefore yields
the $L^1$-convergence 
for 
$\phi(R^{K,\infty}_T(V))\to \phi(R_T(V))$
as 
$K\to\infty$.
By~\eqref{eq:Inequalities_K_n}
for 
$n=\infty$,
we obtain the following inequalities for any
$V\in\cV$:
\begin{eqnarray}
\nonumber
\EE_{r,z}[\phi(R_T(V^{II}))]  = 
 \lim_{K\to\infty}\EE_{r,z}[\phi(R^{K,\infty}_T(V^{II}))] & \geq &
\lim_{K\to\infty}\EE_{r,z}[\phi(R^{K,\infty}_T(V))] =  
\EE_{r,z}[\phi(R_T(V))]  \\
& \geq & \lim_{K\to\infty}\EE_{r,z}[\phi(R^{K,\infty}_T(V^I))] = \EE_{r,z}[\phi(R_T(V^I))],  
\label{eq:Main_Ineq_Thm}
\end{eqnarray}
implying Theorem~\ref{thm:tracking_control}.
under the additional assumption in~\eqref{eq:Add_Cond_on_phi}.

In order to relax the assumption 
$\phi\in\cC^2(\bbR)$,
fix a non-negative
$g\in\cC^\infty(\bbR)$
with support
in
$[M,0]$,
for some $M\in(-\infty,0)$,
satisfying
$\int_{-\infty}^0 g(y)\,\dd y=1$.
For each
$n\in\bbN$,
define the convolution 
$$\phi_n(x):=\int_{-\infty}^0 \phi(x+y/n) g(y)\,\dd y,\quad x\in\bbR.$$
Note that 
$\phi_n:\bbR\to\bbR$
is a convex function,  
which satisfies both~\eqref{eq:Ass_phi} and~\eqref{eq:Add_Cond_on_phi}
(here we still assume that 
$\phi$ is bounded from below),
and the sequence 
$(\phi_n)_{n\in\bbN}$
converges point-wise to
$\phi$
as 
$n\uparrow\infty$
(see e.g.~\cite{RevuzYor:99}, proof of Theorem~VI.1.1 and 
Appendix~3).\footnote{We thank one of the referees for observing
that Theorems~\ref{thm:tracking} and~\ref{thm:tracking_control}
require neither smoothness nor boundedness from below
of the function $\phi$ and suggesting the argument presented here.}
Since 
$\phi$
satisfies~\eqref{eq:Ass_phi},
for any 
$x\in\bbR$
and
$n\in\bbN$
we have
\begin{eqnarray*}
\ell \leq \phi_n(x)  \leq  \max\{\phi(x+M/n),\phi(x)\} \leq a\max\{|x+M/n|^p,|x|^p\}+b \leq  A |x|^p +B,
\end{eqnarray*}
where the constants
$A,B>0$
are independent of both
$n$
and
$x$.
Since the random variable 
$|S|^p$
is integrable (see previous paragraph), where
$S$ is defined in~\eqref{eq:S_sup_max},
so is
$|R_T(V)|^p$
for any
$V\in\cV$.
The inequality above and the Dominated Convergence Theorem
imply
$$
\lim_{n\to\infty}\EE[\phi_n(R_T(V))] = \EE[\phi(R_T(V))]\qquad \text{for any $V\in\cV$,}
$$ 
which together with the inequalities in~\eqref{eq:Main_Ineq_Thm}, establishes Theorem~\ref{thm:tracking_control}
for $\phi$ that are bounded from below
and satisfy~\eqref{eq:Ass_phi}.

Since 
for any
$V\in\cV$
the processes 
$X$
and
$Y(V)$
are true martingales 
by~\eqref{eq:Mart_Integrab_Quad_VAr},
we may substitute 
$\phi$
with a function
$\phi^c(x):=\phi(x)+cx$,
$x\in\bbR$,
for any constant 
$c\in\bbR$,
without altering the solution of 
Problems~\eqref{eq:sol_Td}-\eqref{eq:sol_Tu}.
For any 
$\phi$
satisfying~\eqref{eq:Ass_phi}
there exists
some 
$c\in\bbR$
such that 
$\phi^c$
is bounded from below 
and hence Theorem~\ref{thm:tracking_control} follows.
\hfill \ensuremath{\Box}

\subsection{Non-Markovian Tracking}
\label{subsec:Non_Markov_Trackin}
The Markovian structure of $Z$ does not feature explicitly in 
the conclusion of 
Theorem~\ref{thm:tracking_control},
but only in its assumptions. It is therefore
natural to ask whether,
under some additional hypothesis, 
Theorem~\ref{thm:tracking_control}
can be generalised to a non-Markov 
volatility process
$Z$.
In this section we argue intuitively that,
for such a generalisation 
to hold for a large class of convex cost functions
$\phi$,
an underlying Markovian structure 
is in fact necessary
but show that it is possible
in the special case 
$\phi(x)=x^2$
(see Section~\ref{subsec:Non_Markov_counterexample}
for an explicit example of a process 
$Z$,
with a countable discrete state space 
$\bbE$
in
$\bbR$,
which is not $(\cF_t)$-Markov
and the conclusion of 
Theorem~\ref{thm:tracking_control}
fails).

Assume (in this section only) 
that the stochastic integrals 
$X$
and
$Y(V)$
are given by
\begin{eqnarray}
\label{eq:Non_MArkov_Tracking}
X_t  =   x+\int_0^tH_s\,\dd B_s
\qquad\text{and}\qquad
Y_t(V)  =  y+\int_0^tJ_s\,\dd V_s,
\end{eqnarray}
for some progressively measurable integrands
$H=(H_t)_{t\geq0}$
and
$J=(J_t)_{t\geq0}$
on
$(\Omega, (\cF_t)_{t\geq0}, \cF, \PP)$
and any 
$V\in\cV$.
As usual, we denote
the difference of 
$X$
and
$Y(V)$
by
$R(V)=X-Y(V)$.
The extremal Brownian motions 
$V^I$
and
$V^{II}$, defined in~\eqref{eq:Def_VI_VII},
can be generalised naturally by 
$ V^I_t=\int_0^t \sgn(H_sJ_s)\,\dd B_s $
and
$V_t^{II}=-V_t^I$.
Hence, for any fixed 
$V\in\cV$,
we can define the Brownian motions 
$V^{It}$
and
$V^{IIt}$
as in~\eqref{eq:Bellman_BMs}.
If the generalisation of 
Theorem~\ref{thm:tracking_control}
were to hold in this setting,
%The task now is to understand the stochastic
%dynamics of the corresponding 
the Bellman processes 
$B^I(V)$
and
$B^{II}(V)$,
defined 
in~\eqref{eq:Bellman_processes_TRacking},
would be a submartingale and a supermartingale,
respectively,
for any
$V\in\cV$.
We will focus on  
$B^I(V)$,
as the issues with 
$B^{II}(V)$
are completely analogous.
Representation~\eqref{eq:V_Rep} of
$V$
in Lemma~\ref{lem:R^V_rep}
and It\^o's formula yield
\begin{eqnarray*}
\phi\left(R_T(V^{It})\right) & = &  \phi\left(R_0(V^{It})\right) 
+ M^I_T + 
\frac{1}{2}
\int_0^t \phi''\left(R_s(V)\right) 
\left(H_s^2-2C_sH_sJ_s+J_s^2\right)\,\dd s\\
& + & \frac{1}{2}
\int_t^T \phi''\left(R_s(V^I)-R_t(V^I)+R_t(V)\right) 
\left(|H_s|-|J_s|\right)^2\,\dd s,
\end{eqnarray*}
where
$M^I$
is a local martingale, which we assume to be a true martingale.
The process 
$B^I_t(V)= \EE\left[\phi(R_T(V^{It}))\vert \cF_t\right]$
is a submartingale if and only if the conditional 
expectation 
$\EE[B^I_{t'}(V)-B^I_{t}(V)\vert\cF_t]$,
proportional to 
%\begin{eqnarray*}
%\EE\left[B^I_{t'}(V)-B^I_{t'}(V)\Big\vert\cF_t\right] 
%& =  &
%\frac{1}{2} 
%\EE\left[\int_t^{t'}\phi''\left(R_s(V^{It})\right)\left(H_s^2-2C_sH_sJ_s+J_s^2\right)
%- \phi''\left(R_s(V^I)-R_t(V^I)+R_t(V)\right)  
%\left(|H_s|-|J_s|\right)^2\,\dd s
%\Big\vert\cF_t\right] \\
%& + & \frac{1}{2}
%\EE\left[
%\int_{t'}^T 
%\left(\phi''\left(R_s(V^I)-R_{t'}(V^I)+R_{t'}(V)\right) -
%\phi''\left(R_s(V^I)-R_{t}(V^I)+R_{t}(V)\right) \right)
%\left(|H_s|-|J_s|\right)^2\,\dd s,
%\Big\vert\cF_t\right].
%\end{eqnarray*}
\begin{eqnarray*}
&\EE\left[\right.
\int_{t'}^T 
\left[\phi''\left(R_s(V^I)-R_{t'}(V^I)+R_{t'}(V)\right) -
\phi''\left(R_s(V^I)-R_{t}(V^I)+R_{t}(V)\right) \right]
\left(|H_s|-|J_s|\right)^2\dd s
&
\\
&   
\left.
+ \int_t^{t'}\phi''\left(R_s(V)\right)\left(H_s^2-2C_sH_sJ_s+J_s^2\right)
- \phi''\left(R_s(V^I)-R_t(V^I)+R_t(V)\right)  
\left(|H_s|-|J_s|\right)^2\dd s 
\Big\vert\cF_t\right]&
\end{eqnarray*}
by the formula above, 
is non-negative for all $0\leq t<t'\leq T$.
Hence
$B^I(V)$
is a submartingale for general integrands
$J$
and
$H$
if 
$\phi''$
does not depend on the state,
i.e. 
when the cost criterion $\phi$
is quadratic,
and we obtain:

\begin{proposition}
Let
$R(V)=X-Y(V)$,
where 
$X,Y(V)$
are as in~\eqref{eq:Non_MArkov_Tracking},
and
$T>0$.
Then we have 
$$
\EE\left[(X_T-Y_T(V^{I}))^2\right] \hspace{3mm}
\leq  
\hspace{3mm} \EE\left[(X_T-Y_T(V))^2\right] \hspace{3mm}
\leq \hspace{3mm}\EE\left[(X_T-Y_T(V^{II}))^2\right]
\quad \text{for any $V\in\cV$.}
$$
\end{proposition}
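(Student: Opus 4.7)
The plan is to reduce the claim to a pointwise optimisation via the It\^o isometry, bypassing the Bellman machinery. By Lemma~\ref{lem:R^V_rep}, any $V\in\cV$ admits the representation $V_t=\int_0^t C_s\,\dd B_s+\int_0^t(1-C_s^2)^{1/2}\,\dd W_s$ for some progressively measurable $C$ with $|C_s|\leq 1$ and an $(\cF_t)$-Brownian motion $W\in\cV$ independent of $B$. Substituting this into the definitions~\eqref{eq:Non_MArkov_Tracking} of $X$ and $Y(V)$ yields
\[
R_T(V) \;=\; (x-y) + \int_0^T(H_s-C_sJ_s)\,\dd B_s - \int_0^T(1-C_s^2)^{1/2}J_s\,\dd W_s.
\]

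Under the natural square-integrability $\EE\int_0^T(H_s^2+J_s^2)\,\dd s<\infty$ (without which every expectation in the statement is infinite and the assertion is vacuous), the two stochastic integrals above are orthogonal $L^2$-martingales, since $B$ and $W$ are independent Brownian motions. Combining this with $\EE[R_T(V)-(x-y)]=0$, the It\^o isometry gives
\[
\EE\bigl[(X_T-Y_T(V))^2\bigr] \;=\; (x-y)^2 + \EE\int_0^T\bigl[H_s^2-2C_sH_sJ_s+J_s^2\bigr]\,\dd s.
\]
The integrand depends on $C_s$ only through the linear term $-2C_sH_sJ_s$, so pointwise in $(\omega,s)$ it is minimised over $C_s\in[-1,1]$ at $C_s=\sgn(H_sJ_s)$, taking value $(|H_s|-|J_s|)^2$, and maximised at $C_s=-\sgn(H_sJ_s)$, taking value $(|H_s|+|J_s|)^2$. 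Both extremal selections are realised within $\cV$: the minimiser by $V=V^I$ (for which $C_s^2\equiv 1$ makes the $W$-integral vanish) and the maximiser by $V=V^{II}=-V^I$. Integrating the pointwise bounds yields the two inequalities claimed.

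Conceptually, this direct route makes explicit the observation flagged in the discussion preceding the proposition: because $\phi''\equiv 2$ when $\phi(x)=x^2$, the quadratic expansion of $\phi(R_T(V))$ produces an expectation whose dependence on $C$ is linear pointwise in time, so the optimisation can be carried out path-by-path without any Markov or coupling hypothesis on the integrands $H$ and $J$. No real obstacle arises; the only technicality is the vanishing of the cross term between the $B$- and $W$-integrals in the isometry, which is automatic from the independence of $B$ and $W$ together with the assumed square-integrability of $H$ and $J$.
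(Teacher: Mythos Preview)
Your argument is correct. You compute the quadratic variation of $R(V)$ via Lemma~\ref{lem:R^V_rep}, apply the It\^o isometry to obtain
\[
\EE\bigl[R_T(V)^2\bigr]=(x-y)^2+\EE\int_0^T\bigl(H_s^2-2C_sH_sJ_s+J_s^2\bigr)\,\dd s,
\]
and then optimise the integrand pointwise over $C_s\in[-1,1]$; the extremal choices $C_s=\pm\sgn(H_sJ_s)$ are realised by $V^I$ and $V^{II}$. The orthogonality of the $B$- and $W$-integrals follows from $[B,W]\equiv0$, so the cross term indeed vanishes.

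This is precisely the It\^o-isometry argument the paper alludes to in the paragraph \emph{after} the proposition but does not carry out. The paper's own derivation, in the discussion \emph{preceding} the proposition, proceeds instead via the Bellman process $B^I_t(V)=\EE[\phi(R_T(V^{It}))\mid\cF_t]$: it expands $\phi(R_T(V^{It}))$ by It\^o's formula, writes the conditional increment $\EE[B^I_{t'}(V)-B^I_t(V)\mid\cF_t]$ explicitly, and observes that when $\phi''$ is constant the term involving the difference of $\phi''$ at two points vanishes and the remaining term is non-negative, yielding the submartingale property. Your route is shorter and more elementary for the quadratic case, since it bypasses the Bellman principle and the switching Brownian motions $V^{It}$ entirely; the paper's route, on the other hand, is set up to explain \emph{why} the argument breaks down for non-quadratic $\phi$ in the non-Markovian setting (the $\phi''$-difference term on $[t',T]$ need not have a sign), which is the real point of Section~\ref{subsec:Non_Markov_Trackin}.
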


This proposition is consistent with
an argument based on 
It\^o's isometry:
the variance  
of a stochastic integral is equal to 
the expectation of its quadratic variation
and hence minimising/maximising its variance is equivalent
to locally minimising/maximising the Radon-Nikodym derivative of
its quadratic variation. 
Furthermore, it is also clear from the representation
above that in the absence of an underlying Markovian 
structure, 
for a general convex
$\phi$,
the process 
$B^I(V)$
may fail to be a submartingale 
and hence
the strategy in Theorem~\ref{thm:tracking_control}
is not optimal for general non-Markovian integrands
(see Section~\ref{subsec:Non_Markov_counterexample}
for an explicit example demonstrating this phenomenon).

%========================================================================================
\section{Coupling}
\label{sec:Coupling} 

In this section we consider the problems of 
\textit{minimising}
and 
\textit{maximising} 
the coupling time
of the processes
$X$
and
$Y(V)$
defined in~\eqref{eq:Proc_Def_X},
where the controller is free to choose the driving Brownian motion
$V$
in the integral
$Y(V)$
and the volatility is driven by a continuous-time $(\cF_t)$-Markov chain
$Z$. 
Put differently, we seek sharp upper and lower bounds
for the probability of the event that the coupling 
of
$X$
and
$Y(V)$
occurs after a fixed time 
$T$.
The couplings are characterised 
by the stochastic extrema 
of 
the stopping time
$\tau_0(X-Y(V)):=\inf\{t\geq0:X_t=Y_t(V)\}$
(with convention
$\inf\emptyset = \infty$).
More precisely, 
for any fixed
$T>0$,
we consider the following problems:
\begin{eqnarray*}
\hspace{5mm} & & \text{minimise}\quad
\PP\left[\tau_0(X-Y(V))>T\right]\quad\text{over}\quad V\in\cV,\\
\hspace{5mm} & & \text{maximise}\quad
\PP\left[\tau_0(X-Y(V))>T\right]\quad\text{over}\quad V\in\cV.
\end{eqnarray*}

\begin{theorem}
\label{thm:coupling_control}
Let 
$V^I$
and 
$V^{II}$
be as given by~\eqref{eq:Def_VI_VII}
and
$Z$
%satisfies~\eqref{eq:Int_Assump}.
satisfy~\eqref{eq:Mart_Integrab_Quad_VAr},~\eqref{eq:Z_Chain_Assumption} and~\eqref{eq:Coupling_Int_Assump}. 
%and~\eqref{eq:Int_Assump}
Then for any 
$T>0$
we have
\begin{eqnarray}
\label{eq:sol_Cd}
\inf_{V\in\cV} \PP\left[\tau_0(X-Y(V))>T\right] & = &
\PP\left[\tau_0(X-Y(V^{II}))>T\right],\\
\label{eq:sol_Cu}
\sup_{V\in\cV} \PP\left[\tau_0(X-Y(V))>T\right] & = &
\PP\left[\tau_0(X-Y(V^{I}))>T\right]. 
\end{eqnarray}
\end{theorem}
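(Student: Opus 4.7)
The proof closely parallels that of Theorem~\ref{thm:tracking_control}: truncate the chain via~\eqref{eq:Stopped_Chain}, establish an HJB equation for the candidate value functions (using that $(R(V^I),Z)$ and $(R(V^{II}),Z)$ are Markov by Lemma~\ref{lem:_Zand_W_indep}), carry out a Bellman/It\^o verification for an arbitrary $V\in\cV$ via Lemma~\ref{lem:Q_mart}, and then pass to the limit. The path-dependent indicator cost $\mathbf 1_{\{\tau_0>T\}}$ is handled by absorbing $R(V)$ at the origin. The driving intuition is the pointwise sandwich
\[
(|\sigma_1|-|\sigma_2|)^2(z)\;\leq\;(\sigma_1^2-2c\sigma_1\sigma_2+\sigma_2^2)(z)\;\leq\;(|\sigma_1|+|\sigma_2|)^2(z),\qquad c\in[-1,1],
\]
whose extrema are attained at $c=\hat c_I(z)$ and $c=\hat c_{II}(z)$, respectively; since larger quadratic variation of $R(V)$ translates into smaller coupling-avoidance probability, $V^I$ should maximise and $V^{II}$ should minimise $\PP[\tau_0(X-Y(V))>T]$.

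\emph{Value functions and regularity.} Let $R^n(V):=R(V)_{\cdot\wedge\tau_n\wedge\tau_0(R(V))}$ and set
\[
\psi_n^{(I)}(r,z,t):=\PP_{r,z}\bigl[\tau_0(R^n(V^I))>t\bigr],\qquad \psi_n^{(II)}(r,z,t):=\PP_{r,z}\bigl[\tau_0(R^n(V^{II}))>t\bigr].
\]
Conditionally on the chain $Z$ (which by Lemma~\ref{lem:_Zand_W_indep} is independent of the driving Brownian motion $B$), each of $R(V^I)$ and $R(V^{II})$ is a continuous martingale with \emph{deterministic} quadratic variation, so by Dambis--Dubins--Schwarz it is (conditionally) a time change of an independent standard Brownian motion. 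Combined with the closed-form expression $\PP_r[\tau_0(\beta)>s]=2\Phi(|r|/\sqrt s)-1$ for a standard Brownian motion $\beta$ and the fact that $r\mapsto 2\Phi(|r|/\sqrt s)-1$ is concave on each half-line $\{r>0\},\{r<0\}$, this shows that $(r,t)\mapsto\psi_n^{(i)}(r,z,t)$ lies in $\cC^{2,1}((\bbR\setminus\{0\})\times(0,T])$ with bounded first derivatives and satisfies $\partial_{rr}\psi_n^{(i)}\leq 0$ on $\bbR\setminus\{0\}$.

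\emph{HJB and Bellman verification.} Writing $F^{(i)}(r,z,t):=\psi_n^{(i)}(r,z,T-t)$, the argument of Lemma~\ref{lem:HJB_I} (It\^o's formula for semimartingales combined with Lemma~\ref{lem:Q_mart} applied to the bounded generator $Q_n$ from~\eqref{eq:Stopped_Chain_Gen}) yields, on $(\bbR\setminus\{0\})\times(\bbE\cap U_n)\times[0,T)$,
\[
\tfrac12(|\sigma_1|-|\sigma_2|)^2(z)\,\partial_{rr}F^{(I)}+\bigl(QF^{(I)}(r,\cdot,t)\bigr)(z)-\partial_tF^{(I)}=0,
\]
with the analogous identity for $F^{(II)}$ obtained by replacing $(|\sigma_1|-|\sigma_2|)^2$ with $(|\sigma_1|+|\sigma_2|)^2$. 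Combined with $\partial_{rr}F^{(i)}\leq 0$ and the sandwich above, these are equivalent to $\sup_{c\in[-1,1]}(\cK^c F^{(I)})=\inf_{c\in[-1,1]}(\cK^c F^{(II)})=0$ on the same domain, with sup/inf orientation opposite to Lemma~\ref{lem:HJB_I} precisely because $\partial_{rr}F^{(i)}\leq 0$. For an arbitrary $V\in\cV$ with representation~\eqref{eq:V_Rep}, It\^o applied to the Bellman processes $B^{(i)}_t(V):=\psi_n^{(i)}(R^n_t(V),Z^n_t,T-t)$ --- with the local-martingale parts being bounded true martingales by Lemma~\ref{lem:Q_mart} and the boundedness of $\partial_r\psi_n^{(i)}$ --- shows that $B^{(I)}(V)$ is a supermartingale while $B^{(II)}(V)$ is a submartingale, with equality at $V=V^I$ and $V=V^{II}$ respectively. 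Equating $B^{(i)}_0(V)$ with $\EE[B^{(i)}_T(V)]=\PP[\tau_0(R^n(V))>T]$ gives the truncated versions of~\eqref{eq:sol_Cd}--\eqref{eq:sol_Cu}, and since $\tau_n\uparrow\infty$ almost surely by~\eqref{eq:Z_Chain_Assumption}, dominated convergence (probabilities are bounded by $1$) lifts the bounds to $R(V)$.

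\emph{Main obstacles.} The crux is the regularity and half-line concavity of $\psi_n^{(i)}$; absorbing $R$ at zero sidesteps the singularity of the value function at $r=0$, while the independence result of Lemma~\ref{lem:_Zand_W_indep} reduces the regularity question (conditionally on $Z$) to one about Brownian hitting times admitting closed-form answers. The remaining technicalities --- boundedness of the local-martingale part in the It\^o expansion and the HJB up to the boundary $z\in\partial(\bbE\cap U_n)$ --- are handled exactly as in the tracking proof.
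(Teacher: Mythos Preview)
Your outline follows the same architecture as the paper (localise the chain, represent the candidate value functions via the Brownian first-passage formula after a conditional time change, extract concavity in $r$, run a Bellman/It\^o verification, pass to the limit). There are, however, two genuine gaps.

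\textbf{(1) The derivative $\partial_r\psi_n^{(i)}$ is not bounded.} From the representation $\psi_n^{(i)}(r,z,t)=\EE_z[G(r,A^{(i)}_t)]$ with $G(r,s)=2\Phi(|r|/\sqrt s)-1$ one gets $\partial_r G(r,s)=-\tfrac{2}{\sqrt s}\,n(r/\sqrt s)$, whose supremum over $s>0$ is of order $1/|r|$; hence $|\partial_r\psi_n^{(i)}|$ is only bounded on slabs $\{|r|\ge\varepsilon\}$. Absorbing $R^n(V)$ at $0$ does not help: before absorption the process visits arbitrarily small $|r|$, and the quadratic variation of the stochastic-integral term $\int_0^{\cdot}\partial_r\psi_n^{(i)}(R^n_s(V),Z^n_s,T-s)\,\dd R^n_s(V)$ need not be integrable, so you cannot conclude it is a true martingale. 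The paper repairs this by inserting a further stopping time $\tau^+_{-\varepsilon}=\inf\{t:R^n_t(V)=-\varepsilon\}$, proving the sub/super-martingale inequality for the $\varepsilon$-stopped Bellman process (where the bound $C/\varepsilon$ on $\partial_r$ suffices), and then sending $\varepsilon\downarrow0$ using that the Bellman process is bounded.

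\textbf{(2) The degenerate case $|\sigma_1|(z)=|\sigma_2|(z)$.} For the supremum problem the relevant rate is $(|\sigma_1|-|\sigma_2|)^2$, which can vanish; then $A^{I}$ is not strictly increasing and the regularity of $\psi_n^{(I)}$ (especially in $t$) is no longer immediate. The paper first proves Lemma~\ref{lem:Coupling_n_Solution}(b) under the extra hypothesis $|\sigma_1|\neq|\sigma_2|$ on $\bbE$, and then removes it by perturbing $\sigma_1$ to $\sigma_1^\epsilon$ with $|\sigma_1^\epsilon|\neq|\sigma_2|$ and $|\sigma_1^\epsilon-\sigma_1|<\epsilon$, combined with a Fatou/monotone-time-change comparison of $\tau_0^+$ for the perturbed and unperturbed processes. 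Your sketch does not address this.

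A smaller point: the paper localises by stopping only the \emph{integrand} (so $R^n(V)$ in~\eqref{eq:R_n_V_def} keeps diffusing with frozen volatility after $\tau_n$), not by stopping $R(V)$ itself; correspondingly the optimal controls for the localised problem are $V^{In},V^{IIn}$ built from $Z^n$, and a final identification $R^n(V^{In})=R(V^{I})$ on $\{t\le\tau_n\}$ is used in the limit. Your stopping $R(V)_{\cdot\wedge\tau_n\wedge\tau_0}$ also works at the limit stage, but be careful that the candidate value functions you write down really correspond to the controls you use in the truncated problem.
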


In this section we prove 
Theorem~\ref{thm:coupling_control},
which clearly implies
Theorem~\ref{thm:coupling},
and hence 
solves
Problem~\textbf{(C)}
for a continuous-time $(\cF_t)$-Markov chain
$Z$. 
The aim is to minimise and maximise the coupling time of the martingales 
$X$ 
and
$Y(V)$ given in~\eqref{eq:Proc_Def_X}.
%We
%start by noting that, 
Due to the symmetry in
Problem~\text{\textbf{(C)}},
we may therefore assume without loss of generality that
the starting points of the processes 
$X_0=x$
and
$Y_0(V)=y$
satisfy the inequality
\begin{eqnarray}
\label{eq:start_Point_Assum}
x&\leq& y.
\end{eqnarray}

The candidate value functions in 
Problems~\eqref{eq:sol_Cd} and~\eqref{eq:sol_Cu}
will be functionals of the law of the Markov processes 
%with the state space in
%$\bbR\times\bbR^d$,
$(R(V^{II}),Z)$
and
$(R(V^{I}),Z)$,
respectively,
where 
$R(V)$
is given in~\eqref{eq:R^V_Rep}
and the Brownian motions
$V^{II}$
and
$V^{I}$
are defined in~\eqref{eq:Def_VI_VII}.
%Theorem~\ref{thm:coupling}.
The first step in the proof of Theorem~\ref{thm:coupling_control}
is to localise 
Problems~\eqref{eq:sol_Cd} and~\eqref{eq:sol_Cu}.
With this in mind, for
any
$n\in\bbN$
recall definition~\eqref{eq:Stopped_Chain}
of the stopping time
$\tau_n$
and the stopped chain 
$Z^n$.
Unlike in Section~\ref{sec:Tracking},
in the case of coupling it is important 
to localise the process
$R(V)$
by stopping only the integrand.
The process
$R^n(V):=(R^n_t(V))_{t\geq0}$
is therefore given by
\begin{equation}
\label{eq:R_n_V_def}
R^n_t(V):=r+\int_0^t\sigma_1(Z^n_s)\,\dd B_s-\int_0^t\sigma_2(Z^n_s)\,\dd V_s,  \qquad r\leq0,
\end{equation}
where 
$B$
is the fixed Brownian motion 
and
$V\in\cV$
any Brownian motion on our probability space.
As in the previous section,
%Section~\ref{sec:Tracking},
in this circumstance it is also natural to identify the limit
$(R^\infty(V),Z^\infty)$
with the process
$(R(V),Z)$.
For 
$n\in\bbN\cup\{\infty\}$,
we define 
the first entry time 
of the process
$R^n(V)$
into the positive half-line
by
\begin{eqnarray}
\tau_0^+(R^n(V)) & := & \inf\{t\geq0\,:\,R^n_t(V)>0\}\qquad\text{(with $\inf\emptyset = \infty$)}.
\label{eq:Def_Entrance}
\end{eqnarray}

The localisation procedure will allow us to reduce the problem
%proof of Theorem~\ref{thm:coupling_control}
to the case where the generator of the volatility chain 
$Z$
is bounded, which will in turn make it possible to 
establish sufficient regularity of the candidate value
functions and conclude
that certain processes are true martingales 
(see Section~\ref{subsec:StochTime_Change}).
The two Markov processes
$(R^{IIn},Z^n)$
and
$(R^{In},Z^n)$,
which play a key role in the solution of Problems~\eqref{eq:sol_Cd} and~\eqref{eq:sol_Cu},
%proof of Theorem~\ref{thm:coupling_control},
are defined by
\begin{eqnarray}
\label{eq:Def_of_R}
R^{IIn}_t  :=  r + \int_0^{t} \Sigma_{II}(Z^n_s)\,\dd B_s
\qquad\text{and}\qquad
R^{In}_t  :=  r + \int_0^{t} \Sigma_{I}(Z^n_s)\,\dd B_s,
%\quad r\leq0,
\end{eqnarray}
for any
$r\leq0$,
where 
$B$
and
$Z^n$
are as above and the functions 
$\Sigma_{II},\Sigma_{I}:\bbE\to\bbR$
are given by
\begin{eqnarray}
\label{eq:Sigma_II}
\Sigma_{II}(z) & :=  & \sigma_1(z) + \sgn(\sigma_1(z)\sigma_2(z))\sigma_2(z)\quad \text{$\forall z\in\bbE$,}\\
\Sigma_I(z) & := & \sigma_1(z)-\sgn(\sigma_1(z)\sigma_2(z))\sigma_2(z)\quad \text{$\forall z\in\bbE$.}
\label{eq:Sigma_I}
\end{eqnarray}
Note that,
according to our definitions,
we have
$R^n(V^{II})\neq R^{IIn}$
and
$R^n(V^I)\neq R^{In}$
for any
$n\in\bbN$,
since 
the Brownian motions 
$V^I$
and 
$V^{II}$,
defined in~\eqref{eq:Def_VI_VII},
are given in terms of 
$Z$
and not
$Z^n$.
However, if we define 
the Brownian motions 
$V^{In}$
and 
$V^{IIn}$
by~\eqref{eq:Def_VI_VII}
with 
$Z$
replaced by 
$Z^n$,
then the equalities
$R^n(V^{IIn})=R^{IIn}$
and
$R^n(V^{In})= R^{In}$
hold.

The proof of 
Theorem~\ref{thm:coupling_control}
can now be carried out in three steps. 
First, we formulate a pair of ``approximate'' coupling 
problems (for each
$n\in\bbN$):
\begin{eqnarray}
\label{eq:sol_Cd_n}
  \text{minimise}\quad \PP_{r,z}\left[\tau_0^+(R^n(V))>T\right] \quad\text{over}\quad V\in\cV, &  & \\
\label{eq:sol_Cu_n}
\text{maximise}\quad
\PP_{r,z}\left[\tau_0^+(R^n(V))>T\right] \quad\text{over}\quad V\in\cV, &  & 
\end{eqnarray}
for a fixed
$T>0$
and any starting points
$r\leq0$,
$z\in\bbE$.
The following probabilistic representations
for the candidate value functions 
of Problems~\eqref{eq:sol_Cd_n} and~\eqref{eq:sol_Cu_n} 
play an important role in their solutions: 
\begin{eqnarray}
\label{eq:Val_Fun_II}
\zeta_n^{(II)}(r,z,t)&:=& \PP_{r,z}\left[\tau_{0}^+\!\left(R^{IIn}\right)>t\right], \\ 
\zeta_n^{(I)}(r,z,t)&:=& \PP_{r,z}\left[\tau_{0}^+\!\left(R^{In}\right)>t\right]. 
\label{eq:Val_Fun_I}
\end{eqnarray}

The second step, described in Section~\ref{subsec:StochTime_Change},
solves  Problems~\eqref{eq:sol_Cd_n} and~\eqref{eq:sol_Cu_n}. 
Lemmas~\ref{lem:measures_II_I} and~\ref{lem:rr_positive}
establish the necessary analytical properties of the candidate value functions
$\zeta_n^{(II)}$ 
and
$\zeta_n^{(I)}$, 
which 
enable us to prove 
(see
Lemma~\ref{lem:Coupling_n_Solution})
the optimality of
the Brownian motions
$V^{IIn}$
and
$V^{In}$.
%Problems~\eqref{eq:sol_Cd_n} and~\eqref{eq:sol_Cu_n}. 
More precisely,
the representations in~\eqref{eq:Val_Fun_II}-\eqref{eq:Val_Fun_I}
are used to 
establish the required differentiability 
of the functions
$\zeta_n^{(II)}$
and
$\zeta_n^{(I)}$,
which allows us to study the pathwise
evolution of the corresponding Bellman processes.
The optimality 
of
$V^{IIn}$
and
$V^{In}$,
established 
in 
Lemma~\ref{lem:Coupling_n_Solution},
%Problems~\eqref{eq:sol_Cd_n} and~\eqref{eq:sol_Cu_n},
%respectively,
is a consequence of the non-positivity of the second derivatives
$\frac{\partial^2 \zeta_n^{(II)}}{\partial r^2}$
and
$\frac{\partial^2 \zeta_n^{(I)}}{\partial r^2}$
proved in Lemma~\ref{lem:rr_positive}.

The third step in the proof of 
Theorem~\ref{thm:coupling_control},
given in Section~\ref{subsec:Proof_Coupling},
applies  approximation arguments,
which establish the Brownian motions
$V^{II}$
and
$V^{I}$
as the solutions of 
Problems~\eqref{eq:sol_Cd} and~\eqref{eq:sol_Cu}. 

Finally, Section~\ref{subsec:Non_Markov_Coupling}
discusses the issues that arise 
with a direct approach, 
based on the Dambis, Dubins-Schwarz theorem
(see e.g.~\cite[Thm~V.1.6]{RevuzYor:99}),
to the coupling problems 
in~\eqref{eq:sol_Cd} and~\eqref{eq:sol_Cu}.

\subsection{The stochastic time-change}
\label{subsec:StochTime_Change}
Throughout this section we fix
$n\in\bbN$.
Let 
$\Sigma_{II}:\bbE\to\bbR$
be as in~\eqref{eq:Sigma_II}
and note that
our standing assumption
$(|\sigma_1|+|\sigma_2|)(z)>0$
implies 
$\Sigma_{II}^2(z)>0$
for all
$z\in\bbE$.
Therefore, 
the stochastic time-change
$A^{II}=(A^{II}_t)_{t\geq0}$,
given by
\begin{equation}
\label{eq:Time_Change_A}
A^{II}_t := \int_0^t\Sigma_{II}^2(Z^n_s)\,\dd s,
\end{equation}
is a differentiable, strictly increasing process.
Furthermore, the definition of 
$Z^n$
and~\eqref{eq:Time_Change_A}
imply that 
the almost sure limit
$\lim_{t\uparrow\infty}A^{II}_t=\infty$
holds.
Hence, the inverse
$E^{II}=(E^{II}_s)_{s\geq0}$,
defined as the unique solution of  
$$A^{II}_{E^{II}_s}=s,\quad s\geq0,  
\qquad\text{also satisfies}\qquad
E^{II}_{A^{II}_t}=t
\qquad\text{for all
$t\geq0$,}
$$
and is a strictly increasing process
with differentiable trajectories.
Since 
$Z^n$
is an $(\cF_t)$-Markov chain,
%is 
%$(\cF_t)$-adapted, 
it is 
by Lemma~\ref{lem:_Zand_W_indep}
independent  of the
$(\cF_t)$-Brownian motion 
$B$
in~\eqref{eq:Def_of_R}.
Therefore 
the laws of the processes
$(R^{IIn},Z^n)$
and
$(r+B_{A^{II}},Z^n)$
coincide, where 
$B_{A^{II}}$
denotes the Brownian motion 
$B$
time-changed by the increasing process
$A^{II}$.

Let
$\Sigma_I:\bbE\to\bbR$
be as in~\eqref{eq:Sigma_I}
and assume further that 
$|\sigma_1|(z)\neq|\sigma_2|(z)$
for all 
$z\in\bbE$.
This implies the inequality
$\Sigma_{I}^2(z)>0$
for all
$z\in\bbE$.
Define, in an analogous way to~\eqref{eq:Time_Change_A}, the 
strictly increasing continuous time-change
$A^{I}=(A^{I}_t)_{t\geq0}$
and its inverse 
$E^{I}=(E^{I}_s)_{s\geq0}$,
and note that the processes
$(R^{In},Z^n)$
and
$(r+B_{A^{I}},Z^n)$
have the same law.
We can now state and prove Lemma~\ref{lem:measures_II_I}.
 
\begin{lemma}
\label{lem:measures_II_I}
Pick any
$r\leq 0$
and define
the stopping time 
$\tau^B_r:=\inf\{s:B_s=-r\}$ 
(with $\inf\emptyset =\infty$).
%for any $r\leq 0$,
Recall that the function
$G(r,t):=\PP\left[\tau^B_r>t\right]$,
for any
$t\geq0$,
%and
%$g(r,t):=-\frac{\partial G}{\partial t}(r,t)$ 
takes the form
$$
%=2N\left(-\frac{r}{\sqrt{t}}\right)-1
%\quad\text{for $r\leq0$ and 
%$t\geq0$},\qquad\
%\qquad\text{and}\qquad
%g(r,t)=-\frac{r}{t^{3/2}}\, n\left(-\frac{r}{\sqrt{t}}\right),
%G(r,t):=-\frac{r}{t^{3/2}}\, n\left(\frac{r}{\sqrt{t}}\right),
%\te^{-\frac{r^2}{2t}}/\sqrt{2\pi}
%\qquad\text{where $t\geq0$, $r<0$}
G(r,t)  =  \left\{ \begin{array}{ll}
2N\left(-\frac{r}{\sqrt{t}}\right)-1 & \textrm{ if $r<0,\>t\geq0$,}\\
0 & \textrm{ if $r=0,\>t\geq0$,}
\end{array} \right.
$$
where
$N(\cdot)$
%,
%$n(\cdot)$
denotes the standard normal cdf. %, pdf respectively. 
For any 
$n\in\bbN$
the following holds.
%Fix
%$n\in\bbN$.
\begin{enumerate}[(a)]
\item 
\label{lem:item_a_Finite}
For any
$z\in\bbE$
the following representation holds:
$$
\zeta_n^{(II)}(r,z,t) = 
\EE_z\left[G(r,A^{II}_t)\right]\qquad\text{for $r\leq0,\>t\geq0$.} 
%= 
%\int_{t}^{\infty}  \EE_z\left[g(r,A^{II}_s)(|\sigma_1|+|\sigma_2|)^{2}\left(Z_{s}\right)\right]\,\dd s.
$$
Hence
the partial derivatives 
$%(r,t)\quad\mapsto \quad
\frac{\partial \zeta_n^{(II)}}{\partial r}(r,z,t),
\frac{\partial^2 \zeta_n^{(II)}}{\partial r^2}(r,z,t), 
\frac{\partial \zeta_n^{(II)}}{\partial t}(r,z,t) $
exist 
for 
$r<0, t>0$.
\item 
\label{lem:item_b_Finite}
Assume further that
$|\sigma_1|(z')\neq|\sigma_2|(z')$
for all 
$z'\in\bbE$.
Then for any
$z\in\bbE$
we have
$$
\zeta_n^{(I)}(r,z,t) =
\EE_z\left[G(r,A^{I}_t)\right]\qquad \text{for $r\leq0,\>t\geq0$,}
%= 
%\int_{t}^{\infty}  \EE_z\left[g(r,A^{I}_s)(|\sigma_1|-|\sigma_2|)^{2}\left(Z_{s}\right)\right]\,\dd s
$$
and the partial derivatives
$
\frac{\partial \zeta_n^{(I)}}{\partial r}(r,z,t),
\frac{\partial^2 \zeta_n^{(I)}}{\partial r^2}(r,z,t), 
\frac{\partial \zeta_n^{(I)}}{\partial t}(r,z,t) 
$
exist
for any
$r<0, t>0$.
\end{enumerate}
\end{lemma}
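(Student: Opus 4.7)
The plan is to exploit the independence of the chain $Z^n$ and the Brownian motion $B$, afforded by Lemma~\ref{lem:_Zand_W_indep}, and perform a conditional time-change. Our standing assumption $(|\sigma_1|+|\sigma_2|)(z)>0$ together with a case analysis on $\sgn(\sigma_1(z)\sigma_2(z))$ gives $\Sigma_{II}(z)^2=(|\sigma_1|+|\sigma_2|)^2(z)>0$ for every $z\in\bbE$, so $A^{II}$ defined in~\eqref{eq:Time_Change_A} has continuous, strictly increasing paths with $A^{II}_0=0$ and is $\sigma(Z^n)$-measurable. Since $B$ and $Z^n$ are independent, $B$ remains a standard Brownian motion conditional on $\sigma(Z^n)$, and conditionally on $\sigma(Z^n)$ the process
$(R^{IIn}_s-r)_{s\geq 0}=\bigl(\int_0^s\Sigma_{II}(Z^n_u)\,\dd B_u\bigr)_{s\geq 0}$
is a Wiener integral against a Brownian motion with a deterministic integrand, hence a centered Gaussian martingale with deterministic quadratic variation $A^{II}_s$. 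Consequently, conditional on $\sigma(Z^n)$, it has the same law as $(\wt B_{A^{II}_s})_{s\geq0}$ for a standard Brownian motion $\wt B$ independent of $\sigma(Z^n)$.

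Fix $r<0$. Since $A^{II}$ is continuous and strictly increasing with $A^{II}_0=0$, the set $\{A^{II}_s:s\in[0,t]\}$ equals $[0,A^{II}_t]$, and so
\[
\{\tau_0^+(R^{IIn})>t\}=\bigl\{\textstyle\sup_{s\in[0,t]}(R^{IIn}_s-r)\leq -r\bigr\}=\bigl\{\textstyle\sup_{u\in[0,A^{II}_t]}\wt B_u\leq -r\bigr\}=\{\tau_r^{\wt B}>A^{II}_t\},
\]
with $\tau_r^{\wt B}:=\inf\{u\geq 0:\wt B_u=-r\}$, which is the hitting time whose survival function is precisely $G(r,\cdot)$. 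Because $A^{II}_t$ is $\sigma(Z^n)$-measurable while $\tau_r^{\wt B}$ is independent of $\sigma(Z^n)$, conditioning on $\sigma(Z^n)$ and taking $\PP_z$-expectation yields
\[
\zeta_n^{(II)}(r,z,t)=\EE_z\bigl[\PP\bigl[\tau_r^{\wt B}>A^{II}_t\,\big|\,\sigma(Z^n)\bigr]\bigr]=\EE_z[G(r,A^{II}_t)],
\]
which is the first claim of part~\eqref{lem:item_a_Finite}.

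Differentiability of $\zeta_n^{(II)}$ in $(r,t)$ for $r<0,t>0$ then follows from differentiation under the expectation. The function $G(r,a)=2N(-r/\sqrt a)-1$ is $C^\infty$ on $(-\infty,0)\times(0,\infty)$, and each of $\partial_r G$, $\partial_r^2 G$, $\partial_a G$ factors as a polynomial in $(r,a^{-1/2})$ times the Gaussian $\exp(-r^2/(2a))$; the exponential kills the algebraic blow-up as $a\downarrow 0$, so for $r'$ ranging over $[r-\delta,r+\delta]\subset(-\infty,0)$ the three derivatives are bounded by a finite constant $C(r,\delta)$ uniformly in $a>0$. Since $A^{II}_t>0$ almost surely, the dominated convergence theorem justifies differentiating the expectation twice in $r$ and once in $t$, finishing part~\eqref{lem:item_a_Finite}. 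Part~\eqref{lem:item_b_Finite} is proved verbatim with $(\Sigma_{II},A^{II})$ replaced by $(\Sigma_{I},A^{I})$; the extra hypothesis $|\sigma_1|(z')\neq|\sigma_2|(z')$ is precisely what forces $\Sigma_I^2>0$ on $\bbE$, guaranteeing the positivity and strict monotonicity of $A^I$ needed for the time-change. The only delicate step, and the main (though mild) obstacle, is producing the uniform integrable dominator for the derivatives of $G(r,A^{II}_t)$ near $a=0$, and this is resolved purely by the Gaussian decay of $\exp(-r^2/(2a))$ noted above.
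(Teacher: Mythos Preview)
Your representation argument and the treatment of the $r$-derivatives are correct and agree with the paper's approach. The gap is in the $t$-derivative. Differentiating $\EE_z[G(r,A^{II}_t)]$ in $t$ does not reduce to bounding $\partial_a G$; by the chain rule the pathwise limit of the difference quotient is
\[
\partial_a G(r,A^{II}_t)\cdot \frac{\dd A^{II}_t}{\dd t}=\partial_a G(r,A^{II}_t)\cdot \Sigma_{II}^2(Z^n_t),
\]
and an integrable dominator must control the product, not just the first factor. Your uniform bound on $\partial_a G$ is fine, but the factor $\Sigma_{II}^2(Z^n_\cdot)$ is \emph{not} bounded: after the stopping time $\tau_n$ the chain $Z^n$ is frozen at a state in $\bbE\setminus U_n$, and $\Sigma_{II}$ is in general unbounded there. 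So the sentence ``the dominated convergence theorem justifies differentiating the expectation \ldots\ once in $t$'' is not yet justified, and your identification of ``the only delicate step'' as the behaviour of $G$ near $a=0$ misses the real obstacle.

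The paper addresses precisely this point. It first proves (as a separate Claim) that $\EE_z\bigl[I_{\{\tau_n\leq s\}}\Sigma_{II}^2(Z_{\tau_n})\bigr]<\infty$ for every $s\geq 0$, using assumption~\eqref{eq:Mart_Integrab_Quad_VAr} and the boundedness of the localised generator $Q_n$. It then writes the difference quotient as
\[
\EE_z\Bigl[D_{\Delta t}(r,z,t)\cdot\frac{A^{II}_{t+\Delta t}-A^{II}_t}{\Delta t}\Bigr],
\]
with $D_{\Delta t}$ uniformly bounded, and splits according to whether $\tau_n\leq t$, $\tau_n\geq t+\Delta t$, or $\tau_n\in(t,t+\Delta t)$. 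On the first event the time-change rate equals $\Sigma_{II}^2(Z_{\tau_n})$ (integrable by the Claim); on the second it is bounded by $\max_{z'\in U_n\cap\bbE}\Sigma_{II}^2(z')$; the third event is shown to contribute $o(1)$. To close the gap in your argument you need to supply exactly this integrability of $\Sigma_{II}^2(Z^n)$---once you have it, your DCT step goes through.
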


\begin{proof} 
We first establish~\eqref{lem:item_a_Finite}. 
Recall the definition of 
the time-change process
$A^{II}$
and its inverse 
$E^{II}$
introduced above
and note that the following equalities hold almost surely by
the definition of the stopping time
$\tau^B_r$: 
\begin{equation*}
E^{II}_{\tau^B_r} =   \inf\{E^{II}_s:B_{s}=-r\} =\inf\{t:B_{A^{II}_t}=-r\} \qquad\quad
\text{(with $\inf\emptyset = \infty$).}
\end{equation*}
Therefore,
since the processes 
$(R^{IIn},Z^n)$
and
$(r+B_{A^{II}},Z^n)$
are equal in law, so are 
the random variables
$\tau_0^+(R^{IIn})$
and
$E^{II}_{\tau^B_r}$.
Since 
$E^{II}$
is a strictly increasing continuous 
inverse of 
$A^{II}$,
we have
\begin{eqnarray}
\nonumber
\PP_{r,z}\left[t<\tau_0^+(R^{IIn})\right] 
& = & \PP_{z}\left[A^{II}_t<\tau^B_r\right] \\
& = & \EE_z\left[G(r,A^{II}_t) \right].
\label{eq:REp_first_pas_Basic_Eq}
\end{eqnarray}
This, together with definition~\eqref{eq:Val_Fun_II},
implies the representation
of
$\zeta_n^{(II)}$
in part~\eqref{lem:item_a_Finite}
of the lemma.

The required differentiability of 
$\zeta_n^{(II)}$
in 
%$t$
%and 
$r$
%is also a consequence of~\eqref{eq:REp_first_pas_Basic_Eq}. 
%The differentiability of 
%$\zeta_n^{(II)}$
%in 
%$r$
follows from~\eqref{eq:REp_first_pas_Basic_Eq},
along the same lines as in the proof of Lemma~\ref{lem:Key_Props_I}.
An application of the Dominated Convergence Theorem, 
the mean value theorem 
and the smoothness and boundedness of the functions
$\frac{\partial G}{\partial r}$
and
$\frac{\partial^2 G}{\partial r^2}$
on a rectangle 
$(r-\varepsilon,r+\varepsilon)\times(0,\infty)$ %\subset(-\infty,0)\times(0,\infty)$
for any fixed
$r<0$
and small 
$\varepsilon>0$,
such that 
$\varepsilon+r<0$,
together imply the existence of
$\frac{\partial \zeta_n^{(II)}}{\partial r}(r,z,t)$
and
$\frac{\partial^2 \zeta_n^{(II)}}{\partial r^2}(r,z,t)$.

The differentiability 
of
$\zeta_n^{(II)}$
in
$t$
is more delicate as it is intimately related to the
integrability of the chain 
$Z^n$
and the unboundedness of the function
$\Sigma_{II}$.
We start with the following observation.

\noindent \textbf{Claim.} The stopping time 
$\tau_n$,
defined in~\eqref{eq:Stopped_Chain},
is a continuous random variable and 
\begin{eqnarray}
\label{eq:Stopped_VAriable_L_one}
\EE_z\left[I_{\{\tau_n\leq s\}}\Sigma^2_{II}\left(Z_{\tau_n}\right)\right] < \infty 
& & 
\text{for any $z\in\bbE$ and $s\geq0$.}
\end{eqnarray}
Since
$\PP_z\left[\tau_n>t\right] = \PP_{z}\left[Z^n_t\in U_n\cap\bbE\right]$,
the continuity of 
$\tau_n$
follows (the definition of the sets 
$U_n$
is given above equation~\eqref{eq:Stopped_Chain}).
To prove~\eqref{eq:Stopped_VAriable_L_one}, 
note first that 
$$
(Q_n\Sigma_{II}^2)(z)=(Q\Sigma_{II}^2)(z),
%\quad\text{for}
\quad
z\in\bbE\cap U_n,
\qquad\text{and}\qquad
(Q_n\Sigma_{II}^2)(z)=0,
%\quad\text{for}
\quad
z\in\bbE\setminus U_n.
$$
The assumption in~\eqref{eq:Coupling_Int_Assump}
and definition~\eqref{eq:Sigma_II}
imply that 
%the $Q$-matrix
$Q_n\Sigma_{II}^2$
is a bounded function,
even though neither
$Q\Sigma_{II}^2$
nor
$\Sigma_{II}^2$
necessarily are:
\begin{eqnarray}
\label{eq:Q_n_estimate}
\|Q_n\Sigma_{II}^2\|_{\infty}:=\sup_{z\in\bbE}|(Q_n\Sigma_{II}^2)(z)|<\infty.
\end{eqnarray}
Definition~\eqref{eq:Stopped_Chain}
yields the following inequalities 
$$I_{\{\tau_n\leq s\}}\Sigma^2_{II}\left(Z_{\tau_n}\right)
\leq  \Sigma^2_{II}\left(Z_{s\wedge \tau_n}\right)
=\Sigma_{II}^2\left(Z^n_s\right)\qquad
\text{for any $s\geq0$}.$$
Hence, to prove~\eqref{eq:Stopped_VAriable_L_one}, 
we need to show 
$\EE_z \Sigma_{II}^2\left(Z^n_s\right)<\infty$
for all states
$z\in\bbE$ 
and times
$s \geq 0$.
Recall, from the definition of
$Q_n$
in~\eqref{eq:Stopped_Chain_Gen},
that 
$Q_n$
is a bounded operator 
on the Banach space 
$\ell_\infty(\bbE)$
of bounded real functions mapping 
$\bbE$
into 
$\bbR$.
Let 
$\|Q_n\|_\infty<\infty$
denote its norm and recall that the norm satisfies
$\|Q_n^k\|_\infty\leq \|Q_n\|^k_\infty$
for all
$k\in\bbN$.
We can therefore use the exponential series to define
a bounded operator 
$\exp(sQ_n)$
and express the semigroup of 
$Z^n$
as follows:
$\EE_z \Sigma_{II}^2\left(Z^n_s\right)
=\left(\exp\left(sQ_n\right)\Sigma_{II}^2\right)(z)$.
Hence,
by~\eqref{eq:Q_n_estimate},
we find
$$
\EE_z \left[\Sigma_{II}^2\left(Z^n_s\right)\right]
%=
%\overline \Sigma_{II}^2\left(z\right) + \sum_{k=0}^\infty \frac{(sQ_n)^k}{(k+1)!} 
%\left(Q_n\Sigma_{II}^2\right)(z)
\leq
\Sigma_{II}^2\left(z\right) +s \sum_{k=0}^\infty \frac{(s\|Q_n\|_\infty)^k}{(k+1)!} 
\|Q_n\Sigma_{II}^2\|_\infty<\infty,
$$
for all
$z\in\bbE$
and
$s\geq0$.
This implies~\eqref{eq:Stopped_VAriable_L_one}
and proves the claim.

In order to prove that 
$\zeta_n^{(II)}$
is differentiable in 
time,
fix
$t>0$,
$r<0$,
$z\in\bbE$
and, for any
$\Delta t>0$,
define the random variable
$$ D_{\Delta t}(r,z,t):=
%\frac{1}{A^{II}_{t+\Delta t}-A^{II}_{t}}
\left[G(r,A^{II}_{t+\Delta t})-G(r,A^{II}_t)\right]/
\left(A^{II}_{t+\Delta t}-A^{II}_{t}\right). %^{-1}
$$
Since
$t>0$
(resp.
$\Delta t>0$),
we have 
$A^{II}_{t}>0$
(resp.
$(A^{II}_{t+\Delta t}-A^{II}_{t})>0$)
$\PP_z$-a.s.
Note also that 
the random variable
$|D_{\Delta t}(r,z,t)|$
is bounded by a constant 
uniformly in
$\Delta t>0$.
This follows from the existence of a uniform bound on 
$\frac{\partial G}{\partial t}(r,\cdot)$
in the second variable for any fixed 
$r<0$
and the mean value theorem. 
Furthermore the following limits hold:
\begin{eqnarray}
\label{eq:Almos_Sure_D_delta}
\lim_{\Delta t\to0} D_{\Delta t}(r,z,t) = 
\frac{\partial G}{\partial t}(r,A^{II}_{t})\quad\text{$\PP_z$-a.s.},
\qquad%\text{and}\quad
\lim_{\Delta t\to0}\frac{A^{II}_{t+\Delta t}-A^{II}_{t}}{\Delta t}=
\Sigma_{II}^2(Z^n_t)
\quad\text{$\PP_z$-a.s.}.
\end{eqnarray}
The quotient 
$(\zeta_n^{(II)}(r,z,t+\Delta t)-\zeta_n^{(II)}(r,z,t))/\Delta t$
now takes the form
\begin{eqnarray}
\label{eq:Time_Der_Zeta_II}
\EE_z\left[
D_{\Delta t}(r,z,t)
\left(A^{II}_{t+\Delta t}-A^{II}_{t}\right)/\Delta t
%\frac{A^{II}_{t+\Delta t}-A^{II}_{t}}{\Delta t}
%\left[G(r,A^{II}_{t+\Delta t})-G(r,A^{II}_t)\right]/
%\left(A^{II}_{t+\Delta t}-A^{II}_{t}\right)/\Delta t
\right]
&=&
\EE_z\left[
D_{\Delta t}(r,z,t)
I_{\{\tau_n\leq t\}}
\left(A^{II}_{t+\Delta t}-A^{II}_{t}\right)/\Delta t
\right]\\
\nonumber
&+&
\EE_z\left[
D_{\Delta t}(r,z,t)
I_{\{\tau_n\geq t+\Delta t\}}
\left(A^{II}_{t+\Delta t}-A^{II}_{t}\right)/\Delta t
\right]\\
&+&
\EE_z\left[
D_{\Delta t}(r,z,t)
I_{\{t<\tau_n< t+\Delta t\}}
\left(A^{II}_{t+\Delta t}-A^{II}_{t}\right)/\Delta t
\right].
\nonumber
\end{eqnarray}

Since
$I_{\{\tau_n\leq t\}}
\left(A^{II}_{t+\Delta t}-A^{II}_{t}\right)/\Delta t
=I_{\{\tau_n\leq t\}}\Sigma_{II}^2(Z_{\tau_n})$,
inequality~\eqref{eq:Stopped_VAriable_L_one}
in the claim above, the Dominated Convergence Theorem,
boundedness of 
$D_{\Delta t}(r,z,t)$
and~\eqref{eq:Almos_Sure_D_delta}
imply that the first expectation on the right-hand side
of~\eqref{eq:Time_Der_Zeta_II}
converges to
$\EE_z\left[
\frac{\partial G}{\partial t}(r,A^{II}_{t})
I_{\{\tau_n\leq t\}}\Sigma_{II}^2(Z_{\tau_n})
\right]$
as 
$\Delta t\to0$.

The random variable 
$I_{\{\tau_n\geq t+\Delta t\}} \left(A^{II}_{t+\Delta t}-A^{II}_{t}\right)/\Delta t$
is bounded by a constant for all 
$\Delta t$,
since, on the event
$\{\tau_n\geq t+\Delta t\}$,
the chain 
$Z$
has not left the finite state space
$U_n\cap\bbE$
by the time
$t+\Delta t$.
Therefore, by the Dominated Convergence Theorem, 
the second expectation on the right-hand side of~\eqref{eq:Time_Der_Zeta_II}
converges to
$\EE_z\left[
\frac{\partial G}{\partial t}(r,A^{II}_{t})
I_{\{\tau_n> t\}} \Sigma_{II}^2(Z_{t})
\right]$
as 
$\Delta t\to0$.

We will now prove that the third expectation 
on the right-hand side of~\eqref{eq:Time_Der_Zeta_II}
converges to $0$
as
$\Delta t\to0$.
By decomposing the path of 
$Z^n$
at 
$\tau_n$
on the event 
$\{t<\tau_n< t+\Delta t\}$
and applying the arguments used in the previous two paragraphs
to each of the two parts of the trajectory of 
$Z^n$,
there exists a constant 
$C^+>0$
such that 
\begin{eqnarray*}
\EE_z\left[
\frac{|D_{\Delta t}(r,z,t)|}{C^+}
I_{\{t<\tau_n< t+\Delta t\}}
\frac{A^{II}_{t+\Delta t}-A^{II}_{t}}{\Delta t}
\right]
& \leq & 
\EE_z\left[
\frac{\tau_n -t}{\Delta t}
I_{\{t<\tau_n< t+\Delta t\}}
\right]\\
& + &
\EE_z\left[
\frac{t+\Delta t-\tau_n}{\Delta t}
\Sigma_{II}^2(Z_{\tau_n})
I_{\{t<\tau_n< t+\Delta t\}}
\right]\\
& \leq & 
\PP_z\left[ t<\tau_n< t+\Delta t\right] + 
\EE_z\left[\Sigma_{II}^2(Z_{\tau_n})
I_{\{t<\tau_n< t+\Delta t\}}
\right].
\end{eqnarray*}
The probability 
$\PP_z\left[ t<\tau_n< t+\Delta t\right]$ 
tends to zero 
as
$\Delta t\to0$
by the claim and %the random variable
$\Sigma_{II}^2(Z_{\tau_n}) I_{\{t<\tau_n< t+\Delta t\}}$
is, for 
$\Delta t\in(0,1)$,
bounded above by the random variable
$\Sigma_{II}^2(Z_{\tau_n}) I_{\{\tau_n< t+1\}}$,
which is integrable by~\eqref{eq:Stopped_VAriable_L_one}.
Therefore, another application of the Dominated Convergence Theorem
implies that the function
$\zeta_n^{(II)}$
is right-differentiable in time.
In the case
$\Delta t<0$,
analogous arguments to the ones described above 
yield the
left-differentiability  of
$\zeta_n^{(II)}$.
The limits in~\eqref{eq:Almos_Sure_D_delta}
and their counterparts for
$\Delta t<0$
imply that the left- and right-derivatives in 
$t$
of
$\zeta_n^{(II)}$
coincide and part~\eqref{lem:item_a_Finite} follows.

For the proof of part~\eqref{lem:item_b_Finite},
note that, under the assumption 
$|\sigma_1|(z)\neq|\sigma_2|(z)$
for all 
$z\in\bbE$,
we have 
$\Sigma_I^2(z)=(|\sigma_1|-|\sigma_2|)^2(z)>0$
for all 
$z\in\bbE$.
Therefore, a completely analogous 
argument to the one that established
the equality in~\eqref{eq:REp_first_pas_Basic_Eq},
based on the stochastic time-change 
$A^I$
and the fact that 
the laws of the processes
$(R^{In},Z^n)$
and
$(r+B_{A^{I}},Z^n)$
coincide, where 
$B_{A^{I}}$
denotes the Brownian motion 
$B$
time-changed by the increasing process
$A^{I}$,
implies the representation of
$\zeta_n^{(I)}$
given in part~\eqref{lem:item_b_Finite}
of the lemma.
The differentiability of 
$\zeta_n^{(I)}$
follows along the same lines
as in part~\eqref{lem:item_a_Finite}.
The details of the arguments are now straightforward
and are left to the reader.
\end{proof}

Lemma~\ref{lem:rr_positive}
shows that the functions
$\zeta_n^{(II)}$
and
$\zeta_n^{(I)}$
solve the 
HJB equations
that correspond to the 
Problems~\eqref{eq:sol_Cd_n} and~\eqref{eq:sol_Cu_n}. 

\begin{lemma}
\label{lem:rr_positive}
Let 
$\zeta_n^{(II)}$ 
and
$\zeta_n^{(I)}$ 
be given by
\eqref{eq:Val_Fun_II}-\eqref{eq:Val_Fun_I}.
\begin{enumerate}[(a)]
%\item
%Then,
%for any 
%$z\in\bbE$,
%the partial derivatives 
%$$(r,t)\quad\mapsto \quad
%\frac{\partial \zeta_n^{(II)}}{\partial r}(r,z,t),\quad 
%\frac{\partial^2 \zeta_^{(II)}}{\partial r^2}(r,z,t), \quad
%\frac{\partial \zeta_n^{(II)}}{\partial t}(r,z,t) %:(-\infty,0)\times(0,\infty)\to\bbR
%$$
%exist %and are finite 
%at any point 
%$(r,t)$
%in the open subset 
%$(-\infty,0)\times(0,\infty)$
%of 
%$\bbR^2$.
%
%Furthermore,
%$r\mapsto\frac{\partial^2 \zeta_n^{(II)}}{\partial r^2}(r,z,t)$ 
%is a continuous function on the interval
%$(-\infty,0)$
%for any
%$z\in\bbE$
%and
%$t>0$.
\item The modulus of the partial derivative
$\lvert \frac{\partial \zeta_n^{(II)}}{\partial r}\rvert$ 
is bounded on the set 
$(-\infty,-\varepsilon)\times\bbE\times (0,\infty)$
for any 
$\varepsilon>0$
and the second derivative in space of $\zeta_n^{(II)}$ 
satisfies
\begin{eqnarray}
\label{eq:Negative_Second_Der_II}
\frac{\partial^2 \zeta_n^{(II)}}{\partial r^2}(r,z,t) & \leq & 0\qquad\text{for all
$(r,z,t)\in(-\infty,0)\times\bbE\times(0,\infty).$}
\end{eqnarray}
If 
$|\sigma_1|(z)\neq|\sigma_2|(z)$
for all 
$z\in\bbE$,
then 
the modulus
$\lvert \frac{\partial \zeta_n^{(II)}}{\partial r}\rvert$ 
is bounded on 
$(-\infty,-\varepsilon)\times\bbE\times (0,\infty)$,
for any 
$\varepsilon>0$,
and we have
\begin{eqnarray}
\label{eq:Negative_Second_Der_I}
\frac{\partial^2 \zeta_n^{(I)}}{\partial r^2}(r,z,t) & \leq & 0\qquad\text{for all
$(r,z,t)\in(-\infty,0)\times\bbE\times(0,\infty)$.}
\end{eqnarray}
\label{lem_negative:item_i}
\item For any $T>0$
the following holds
for all $r<0, t\in[0,T)$ and $z\in\bbE$
\begin{eqnarray}
\label{eq:HJB_II_Finite}
\inf_{c\in[-1,1]}\left\{\left[\cL^c\left(\zeta_n^{(II)}(\cdot,\cdot,T-t)\right)\right](r,z) -  
\frac{\partial \zeta_n^{(II)}}{\partial t}(r,z,T-t) \right\} & = & 0, %\qquad \text{for $r<0, t\in[0,T)$ and $z\in\bbE$,} 
\end{eqnarray}
where the function
$\cL^c\left(\zeta_n^{(II)}(\cdot,\cdot,T-t)\right)$
is defined in~\eqref{eq:Def_cL}
with 
$Q$
substituted by
$Q_n$
from~\eqref{eq:Stopped_Chain_Gen}.
Furthermore, we have
\begin{eqnarray*}
\label{eq:HJB_II_Boundary}
\zeta_n^{(II)}(r,z,0)  & = & 1\qquad\text{for all $(r,z)\in(-\infty,0)\times\bbE$,}\\
\zeta_n^{(II)}(0,z,t)  & = & 0\qquad\text{for all $(z,t)\in\bbE\times[0,\infty)$.}
\end{eqnarray*}
If 
$|\sigma_1|(z)\neq|\sigma_2|(z)$
for all 
$z\in\bbE$,
then for all $r<0, t\in[0,T)$ and $z\in\bbE$
we have
\begin{eqnarray}
\label{eq:HJB_I_Finite}
\sup_{c\in[-1,1]}\left\{\left[\cL^c\left(\zeta_n^{(I)}(\cdot,\cdot,T-t)\right)\right](r,z) -  
\frac{\partial \zeta_n^{(I)}}{\partial t}(r,z,T-t) \right\} & = & 0 %\qquad \text{for $r<0, t\in[0,T)$ and $z\in\bbE$,} 
\end{eqnarray}
(as above 
$\cL^c\left(\zeta_n^{(I)}(\cdot,\cdot,T-t)\right)$
is defined in~\eqref{eq:Def_cL}
with 
$Q$
substituted by
$Q_n$
from~\eqref{eq:Stopped_Chain_Gen})
and
%where the function
%$\cL^c\left(\zeta^{(I)}(\cdot,\cdot,T-t)\right)$
%is given in~\eqref{eq:Def_cL}.
%Furthermore, we have
\begin{eqnarray*}
\label{eq:HJB_I_Boundary}
\zeta_n^{(I)}(r,z,0)  & = & 1\qquad\text{for all $(r,z)\in(-\infty,0)\times\bbE$,}\\
\zeta_n^{(I)}(0,z,t)  & = & 0\qquad\text{for all $(z,t)\in\bbE\times[0,\infty)$.}
\end{eqnarray*}
\label{lem_negative:item_ii}
\end{enumerate}
\end{lemma}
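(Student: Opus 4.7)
The plan is to prove part~\eqref{lem_negative:item_i} by differentiating the representation $\zeta_n^{(II)}(r,z,t)=\EE_z[G(r,A^{II}_t)]$ from Lemma~\ref{lem:measures_II_I} under the expectation, and part~\eqref{lem_negative:item_ii} by first establishing the Kolmogorov backward equation for the joint Markov process $(R^{IIn},Z^n)$ and then exploiting the non-positivity of $\partial_{rr}\zeta_n^{(II)}$ from part~\eqref{lem_negative:item_i} to exchange the order of optimisation in $c$.

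For part~\eqref{lem_negative:item_i}, direct differentiation of $G(r,t)=2N(-r/\sqrt{t})-1$ gives
\begin{equation*}
\frac{\partial G}{\partial r}(r,t)=-\frac{2}{\sqrt{t}}\,N'\!\left(-\frac{r}{\sqrt{t}}\right),\qquad
\frac{\partial^2 G}{\partial r^2}(r,t)=\frac{2r}{t^{3/2}}\,N'\!\left(-\frac{r}{\sqrt{t}}\right),
\end{equation*}
where $N'$ denotes the standard normal density. Elementary calculus shows that, for fixed $r<0$, the map $t\mapsto t^{-1/2}\exp(-r^2/(2t))$ attains its maximum over $t>0$ at $t=r^2$, whence $\lvert\partial G/\partial r(r,t)\rvert\leq \sqrt{2/(\pi e)}\,|r|^{-1}\leq \sqrt{2/(\pi e)}/\varepsilon$ uniformly in $t>0$ for all $r\leq -\varepsilon$, while $\partial^2 G/\partial r^2(r,t)\leq 0$ whenever $r<0$. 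Since the partial derivatives of $\zeta_n^{(II)}$ in $r$ exist by Lemma~\ref{lem:measures_II_I}, a Dominated Convergence argument identical in form to the one already deployed in the proof of that lemma permits differentiation under the expectation, giving
\begin{equation*}
\frac{\partial\zeta_n^{(II)}}{\partial r}(r,z,t)=\EE_z\!\left[\frac{\partial G}{\partial r}(r,A^{II}_t)\right],\qquad
\frac{\partial^2\zeta_n^{(II)}}{\partial r^2}(r,z,t)=\EE_z\!\left[\frac{\partial^2 G}{\partial r^2}(r,A^{II}_t)\right].
\end{equation*}
Boundedness of $\lvert\partial\zeta_n^{(II)}/\partial r\rvert$ on $(-\infty,-\varepsilon)\times\bbE\times(0,\infty)$ and the inequality~\eqref{eq:Negative_Second_Der_II} are immediate. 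Under the extra hypothesis $|\sigma_1|\neq |\sigma_2|$, the same argument applied to $A^I$ yields the assertions for $\zeta_n^{(I)}$.

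For part~\eqref{lem_negative:item_ii}, the boundary conditions follow directly from~\eqref{eq:Val_Fun_II}-\eqref{eq:Val_Fun_I}: for $r<0$ the stopping time $\tau_0^+(R^{IIn})$ is strictly positive a.s., while for $r=0$ it equals zero. To obtain the HJB~\eqref{eq:HJB_II_Finite}, I would first establish the backward PDE
\begin{equation*}
\frac{\partial\zeta_n^{(II)}}{\partial t}(r,z,t)=\tfrac{1}{2}\Sigma_{II}^2(z)\frac{\partial^2\zeta_n^{(II)}}{\partial r^2}(r,z,t)+\bigl(Q\zeta_n^{(II)}(r,\cdot,t)\bigr)(z)
\end{equation*}
on $(-\infty,0)\times\bbE\times(0,\infty)$, via the Markov identity $\zeta_n^{(II)}(r,z,t+h)=\EE_{r,z}\bigl[\mathbf{1}\{\tau_0^+(R^{IIn})>h\}\zeta_n^{(II)}(R^{IIn}_h,Z^n_h,t)\bigr]$ and It\^o's formula applied to $(R^{IIn},Z^n)$, whose infinitesimal generator is $\tfrac12\Sigma_{II}^2(z)\partial_{rr}f+(Q_nf)(z)$ on sufficiently smooth bounded $f$; since $Q_n=Q$ on $U_n\cap\bbE$ and the chain is absorbed elsewhere, the right-hand side may be written with $Q$. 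The coefficient $\tfrac{1}{2}(\sigma_1^2-2c\sigma_1\sigma_2+\sigma_2^2)(z)$ in $\cL^c$ is maximised over $c\in[-1,1]$ at $c=\hat c_{II}(z)=-\sgn(\sigma_1\sigma_2)(z)$ with value $\tfrac12\Sigma_{II}^2(z)$; since $\partial_{rr}\zeta_n^{(II)}\leq 0$ by part~\eqref{lem_negative:item_i}, multiplication by this non-positive quantity reverses the direction of optimisation and shows that
\begin{equation*}
\inf_{c\in[-1,1]}\bigl[\cL^c\zeta_n^{(II)}(\cdot,\cdot,T-t)\bigr](r,z)=\tfrac{1}{2}\Sigma_{II}^2(z)\frac{\partial^2\zeta_n^{(II)}}{\partial r^2}(r,z,T-t)+\bigl(Q\zeta_n^{(II)}(r,\cdot,T-t)\bigr)(z),
\end{equation*}
which equals $\partial_t\zeta_n^{(II)}(r,z,T-t)$ by the backward PDE, yielding~\eqref{eq:HJB_II_Finite}. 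The proof of~\eqref{eq:HJB_I_Finite} is entirely parallel, using that the same quadratic in $c$ is \emph{minimised} at $c=\hat c_I(z)=\sgn(\sigma_1\sigma_2)(z)$ with value $\tfrac12\Sigma_I^2(z)$, so the supremum over $c$ (after multiplication by the non-positive $\partial_{rr}\zeta_n^{(I)}$) coincides with the backward PDE for $(R^{In},Z^n)$.

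The main technical obstacle will be the rigorous derivation of the backward PDE on the open half-line $r<0$, where $\zeta_n^{(II)}$ is $\cC^{2,1}$ only strictly below the absorbing boundary: to apply It\^o's formula without incurring boundary terms one should work on the stochastic interval $[0,\tau_0^+(R^{IIn})\wedge h]$ and invoke Lemma~\ref{lem:Q_mart}, whose hypothesis~\eqref{eq:sup_Assum_Q} is satisfied by $Q_n$ thanks to the estimate~\eqref{eq:Q_n_estimate}, so that the jump compensator of $Z^n$ produces a true martingale; passing $h\downarrow 0$ in the resulting martingale identity then delivers the PDE and, combined with part~\eqref{lem_negative:item_i}, the HJB.
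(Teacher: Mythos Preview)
Your proposal is correct in outline and matches the paper's approach: part~(a) is identical, and part~(b) uses the same ingredients (It\^o's formula, Lemma~\ref{lem:Q_mart} for the jump compensator of $Z^n$, and the concavity from part~(a) to pass from the backward PDE to the HJB). The paper organises~(b) via the conditional-expectation martingale $M^{II}_t=\zeta_n^{(II)}(R^{IIn}_t,Z^n_t,T-t)$ and reads the PDE off from the vanishing of its bounded-variation part, rather than the semigroup identity plus $h\downarrow 0$, but this is cosmetic.

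The one point that needs sharpening is your handling of the boundary. Stopping at $\tau_0^+(R^{IIn})$ is not enough: part~(a) bounds $\lvert\partial_r\zeta_n^{(II)}\rvert$ only on $(-\infty,-\varepsilon)$, not up to $r=0$, so the It\^o stochastic integral is not known to be a true martingale on $[0,\tau_0^+\wedge h]$, and the regularity needed for It\^o's formula itself is only available strictly below the boundary. The paper resolves exactly the obstacle you flag by stopping instead at $\tau_{-\varepsilon}^+:=\inf\{s:R^{IIn}_s=-\varepsilon\}$, extracting the PDE on $(-\infty,-\varepsilon)\times\bbE$ for each $\varepsilon>0$, and then letting $\varepsilon\downarrow 0$. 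A minor correction: the hypothesis~\eqref{eq:sup_Assum_Q} for $Q_n$ holds simply because $U_n\cap\bbE$ is finite (see~\eqref{eq:Stopped_Chain_Gen}); the estimate~\eqref{eq:Q_n_estimate} you cite is a different statement.
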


\begin{proof}\eqref{lem_negative:item_i}
Let 
$G(r,t)$
be as defined in Lemma~\ref{lem:measures_II_I}.
Since
$n'(x)=-xn(x)$,
where 
$n(\cdot)$
is the standard normal pdf, we have
\begin{eqnarray}
\label{eq:G_first_Der}
\frac{\partial G}{\partial r}(r,t) & = & -\frac{2}{\sqrt{t}} n\left(-\frac{r}{\sqrt{t}}\right),\\
\frac{\partial^2 G}{\partial r^2}(r,t) & = & 2\frac{r}{t^{3/2}} n\left(-\frac{r}{\sqrt{t}}\right)\leq 0,
\label{eq:G_second_Der}
\end{eqnarray}
for all $r<0$, $t>0$.
The derivatives
$\frac{\partial^i G}{\partial r^i}$,
$i=1,2$,
are bounded on 
$(r-\varepsilon,r+\varepsilon)\times(0,\infty)$
for any
$r<0$
and small enough
$\varepsilon>0$
and hence, as in the proof of Lemma~\ref{lem:measures_II_I}, 
the Dominated Convergence Theorem implies
\begin{eqnarray*}
\frac{\partial \zeta_n^{(II)}}{\partial r}(r,z,t)  =  
\EE_z\left[\frac{\partial G}{\partial r}(r,A^{II}_t)\right] &\text{and} &
\frac{\partial^2 \zeta_n^{(II)}}{\partial r^2}(r,z,t)  =  
\EE_z\left[\frac{\partial^2 G}{\partial r^2}(r,A^{II}_t)\right] 
%\qquad\text{for all $r<0$, $z\in\bbE$, $t>0$.}
\end{eqnarray*}
for all $r<0$, $z\in\bbE$, $t>0$.
Inequality~\eqref{eq:Negative_Second_Der_II}
now follows from the inequality in~\eqref{eq:G_second_Der}
and the boundedness of
$\lvert \frac{\partial \zeta_n^{(II)}}{\partial r}\rvert$ 
on the product
$(-\infty,-\varepsilon)\times\bbE\times (0,\infty)$
is a consequence of~\eqref{eq:G_first_Der}.
Under the assumption that 
$|\sigma_1|(z)\neq|\sigma_2|(z)$
for all 
$z\in\bbE$,
the properties of the partial derivatives in space of 
$\zeta_n^{(I)}$
%the proof of~\eqref{eq:Negative_Second_Der_I}
follow from Lemma~\ref{lem:measures_II_I}~\eqref{lem:item_b_Finite}
and~\eqref{eq:G_first_Der}-\eqref{eq:G_second_Der}
along the same lines.

\noindent \eqref{lem_negative:item_ii}
In order to prove that 
$\zeta_n^{(II)}$
satisfies the HJB equation above, define a bounded martingale 
$M^{II}=(M^{II}_t)_{t\in[0,T]}$,
where
$$
M_t^{II}:=\PP_{r,z}\left[\tau_0^+(R^{IIn})>T\vert\cF_t\right],\qquad r\leq0,z\in\bbE,t\in[0,T],
$$
where the process
$R^{IIn}$,
started at
$R^{IIn}_0=r$,
is given in~\eqref{eq:Def_of_R} 
and the corresponding 
first-passage time 
$\tau_0^+(R^{IIn})$ %:=\tau_0^+(R(V^{II}))$
is defined in~\eqref{eq:Def_Entrance}.
The Markov property of the process
$(R^{IIn},Z^n)$
and the definition of 
$\zeta_n^{(II)}$
in~\eqref{eq:Val_Fun_II}
imply the equality 
\begin{eqnarray}
\label{eq:Markov_REp_Zeta_II}
\zeta_n^{(II)}\left(R^{IIn}_{\tau_0^+(R^{IIn})\wedge t},Z^n_{\tau_0^+(R^{IIn})\wedge t},T-\tau_0^+(R^{IIn})\wedge t\right)& = & \PP_{r,z}\left[\tau_0^+(R^{IIn})>T\vert\cF_t\right]=M_t^{II},
\end{eqnarray}
for all $r<0,z\in\bbE, t\in[0,T]$.

Note that, 
by~\eqref{eq:G_first_Der},
the modulus %first derivative
$\lvert\frac{\partial G}{\partial r}\rvert$
is globally bounded
on the set
$(-\infty,-\varepsilon]\times(0,\infty)$
for any
$\varepsilon>0$.
Let
$r<0$,
pick 
$\varepsilon\in(0,-r)$
and consider the stopped martingale
$M^{\varepsilon}=(M^{\varepsilon}_t)_{t\in[0,T]}$,
defined by
$$
M^{\varepsilon}_t:=M^{II}_{\tau_{-\varepsilon}^+\wedge t},\qquad\text{where $\tau_{-\varepsilon}^+:=\inf\{s\geq0:R^{IIn}_s=-\varepsilon\}$.}
$$
%and the stopped process
%$R^\varepsilon=(R^\varepsilon_t)_{t\geq0}$
%given by
%$R^\varepsilon_t:=R_{\tau_{-\varepsilon}^+\wedge t}$.
%\tau_{-\varepsilon}^+\leq \tau_{0}^+$
%$\PP_{r,z}$-a.s.,
It\^o's formula for general semimartingales~\cite[Sec~II.7,~Thm.~33]{Protter:05}
applied to the 
representation in~\eqref{eq:Markov_REp_Zeta_II} 
of the martingale
$M^{\varepsilon}$,
Lemma~\ref{lem:measures_II_I}~\eqref{lem:item_a_Finite},
Lemma~\ref{lem:Q_mart}
applied for the process
$U=(R^{IIn}_{t\wedge\tau_{-\varepsilon}^+})_{t\in[0,T]}$
and the bounded function 
$\zeta_n^{(II)}$,
and the facts that the quadratic covariation
$[R^{IIn},Z^{n,i}]_t=0$
vanishes for all times
$t$
and coordinates $Z^{n,i}$
of the chain
$Z^n$
(recall that
$\bbE\subset\bbR^d$),
$\frac{\partial \zeta_n^{(II)}}{\partial r}$
is bounded on
$(-\infty,-\varepsilon]\times\bbE\times(0,\infty)$
and
$\PP_{r,z}\left[R^{IIn}_{t\wedge\tau_{-\varepsilon}^+}\leq-\varepsilon, \,\forall t\geq0\right]=1$
together yield that the process
$N^\varepsilon=(N_t^\varepsilon)_{t\in[0,T)}$,
defined by
\begin{eqnarray*}
N_t^\varepsilon & := & \int_0^{t\wedge \tau_{-\varepsilon}^+}\left[\frac{1}{2}(|\sigma_1|+|\sigma_2|)^2(Z^n_s) 
\frac{\partial^2 \zeta_n^{(II)}}{\partial r^2}(R^{IIn}_s,Z^n_s,T-s)\right.\\
&  & \qquad + \left. (Q_n\zeta_n^{(II)}(R^{IIn}_s,\cdot,T-s))(Z^n_s)-
\frac{\partial \zeta_n^{(II)}}{\partial t}(R^{IIn}_s,Z^n_s,T-s)
\right]\,\dd s, 
\end{eqnarray*}
is a continuous martingale. Hence, since the quadratic variation of
$N^\varepsilon$
vanishes,
we have
$N_t^\varepsilon=0$
for all times
$t<T$
%identically equal to zero for all 
and starting
points
$(r,z)$
with 
$r<-\varepsilon$.
Since 
$\varepsilon>0$
is arbitrarily small, 
%and, by assumption~\eqref{eq:Z_Chain_Assumption}, the process 
%$(R^{IIn},Z^n)$
%visits neighbourhoods of all points in its state space
%with positive probability, 
%this implies that 
for all $r<0$, $z\in\bbE$
and
$t\in[0,T)$
we have:
\begin{eqnarray}
\label{eq:PDE_HJB}
& &
\frac{1}{2}(|\sigma_1|+|\sigma_2|)^2(z)
\frac{\partial^2 \zeta_n^{(II)}}{\partial r^2}(r,z,T-t)
+(Q_n\zeta_n^{(II)}(r,\cdot,T-t))(z)-
\frac{\partial \zeta_n^{(II)}}{\partial t}(r,z,T-t)  =  0
\end{eqnarray}
(here we also apply the fact that
for any $z\in\bbE$
we have 
$\PP_z[Z^n_t=z,\> \forall t\leq T]>0$
and on this event 
%$\tau_n\geq T$
%$\PP_z$-a.s.
the process
$R^{IIn}$
%R^{K,n}(V^{I})$
is by~\eqref{eq:Def_of_R} and~\eqref{eq:Sigma_II}
equal to 
%the constant 
%$r$
%(if $\sigma_1(z)=\sigma_2(z)$)
%or 
a Brownian motion 
%stopped when it exits 
%$(-K,K)$,
%Since,  
which,
with positive probability,
leaves the interval
$(-\infty,-\varepsilon)$
%stays in this interval 
after
$T$ and
visits a neighbourhood of any fixed
point in 
$(-\infty,-\varepsilon)$ before 
$T$).
%$(-K,K)$).
%the fact that 
%$N_t=0$
%for all 
%$t\in[0,T]$
%and starting
%points
%$(r,z)$
%implies
%the equality 
%\begin{eqnarray}
%\label{eq:PDE_HJB_Tracking_I}
%& &
%\frac{1}{2}(|\sigma_1|-|\sigma_2|)^2(z)
%\frac{\partial^2 \psi_{K,n}^{(I)}}{\partial r^2}(r,z,T-t)
%+(Q\psi_{K,n}^{(I)} (r,\cdot,T-t))(z)-
%\frac{\partial\psi_{K,n}^{(I)}}{\partial t}(r,z,T-t)  =  0
%\end{eqnarray}
%for all 
%$(r,z,t)\in(-K,K)\times\left(\bbE\cap U_n\right)\times[0,T)$.

To prove the first HJB equation above,
note that for any
$c\in[-1,1]$
the following inequality holds
\begin{eqnarray*}
(\sigma_1^2-2c\sigma_1\sigma_2+\sigma_2^2)(z)
\frac{\partial^2 \zeta_n^{(II)}}{\partial r^2}(r,z,T-t) 
& \geq &
(|\sigma_1|+|\sigma_2|)^2(z) %(\sigma_1^2+\sigma_2^2)(z)
\frac{\partial^2 \zeta_n^{(II)}}{\partial r^2}(r,z,T-t),
\end{eqnarray*}
for all
$r<0$,
$z\in\bbE$
and
$t\in[0,T)$
since
$\frac{\partial^2 \zeta_n^{(II)}}{\partial r^2}(r,z,T-t)\leq0$
by~\eqref{eq:Negative_Second_Der_II}.
This inequality,
the definition of
$\cL^c\zeta_n^{(II)}$
in~\eqref{eq:Def_cL}
and identity~\eqref{eq:PDE_HJB}
imply~\eqref{eq:HJB_II_Finite}.
The boundary behaviour 
of the function
$\zeta_n^{(II)}$,
stated in the lemma,
at
$t=0$
and at
$r=0$
follows directly from the representation of 
$\zeta_n^{(II)}$
given in~\eqref{eq:Val_Fun_II}.

In the case of the function
$\zeta_n^{(I)}$,
by~\eqref{eq:Negative_Second_Der_I}
it follows that 
%we have
%$\frac{\partial^2 \psi^{(I)}}{\partial r^2}\geq0$
%on $(-\infty,0]\times\bbE$
%and hence the following inequality
%holds
\begin{eqnarray*}
\frac{1}{2}(|\sigma_1|-|\sigma_2|)^2(z) 
\frac{\partial^2 \zeta_n^{(I)}}{\partial r^2}(r,z,T-t)
& \leq &
\frac{1}{2}(\sigma_1^2-2c\sigma_1\sigma_2+\sigma_2^2)(z)
\frac{\partial^2 \zeta_n^{(I)}}{\partial r^2}(r,z,T-t) 
\end{eqnarray*}
for any
$c\in[-1,1]$
and
all
$r<0$,
$z\in\bbE$,
$t\in[0,T)$.
An analogous argument to the one in the case of
$\zeta_n^{(II)}$
establishes the HJB equation in~\eqref{eq:HJB_I_Finite}
and the required boundary behaviour.
This concludes the proof of the lemma.
\end{proof}

We can now prove that 
$\zeta_n^{(II)}$
and
$\zeta_n^{(I)}$
are the value functions for 
Problems~\eqref{eq:sol_Cd_n} and~\eqref{eq:sol_Cu_n}.

\begin{lemma}
\label{lem:Coupling_n_Solution}
Pick a time horizon 
$T>0$
and, 
for any
$V\in\cV$,
let
$R^n(V)$
and
$\tau_0^+(R^n(V))$
be as in~\eqref{eq:R_n_V_def}
and~\eqref{eq:Def_Entrance}
respectively.
\begin{enumerate}[(a)]
\item 
\label{lem:item_Val_Fun_II_n}
The function 
$\zeta_n^{(II)}$,
defined in~\eqref{eq:Val_Fun_II},
satisfies the following:
\begin{eqnarray*}
\zeta_n^{(II)}(r,z,T) = \inf_{V\in\cV} \PP_{r,z}\left[\tau_0^+(R^n(V))>T\right] 
\qquad\text{for any $r\leq0,z\in\bbE$.}
\end{eqnarray*}
\item Assume that 
$|\sigma_1|(z)\neq|\sigma_2|(z)$
for all 
$z\in\bbE$.
Then the function 
$\zeta_n^{(I)}$,
given in~\eqref{eq:Val_Fun_I},
satisfies
\begin{eqnarray*}
\zeta_n^{(I)}(r,z,T) = \sup_{V\in\cV} \PP_{r,z}\left[\tau_0^+(R^n(V))>T\right] 
\qquad\text{for any $r\leq0,z\in\bbE$.}
\end{eqnarray*}
\label{lem:item_Val_Fun_I_n}
\end{enumerate}
\end{lemma}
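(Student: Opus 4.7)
The approach is a classical verification argument based on the HJB inequalities of Lemma~\ref{lem:rr_positive} together with localisation, essentially mirroring the Bellman-principle step carried out in Section~\ref{subsec:Proof_TRacking} for the tracking problem. Throughout, fix $V\in\cV$ with the representation $V=\int_0^\cdot C_s\,\dd B_s+\int_0^\cdot(1-C_s^2)^{1/2}\,\dd W_s$ provided by Lemma~\ref{lem:R^V_rep} ($W\in\cV$ independent of $B$, $|C|\leq 1$), and define the Bellman process $B^{II}_t(V):=\zeta_n^{(II)}(R^n_t(V),Z^n_t,T-t)$ for $t\in[0,T]$. I will show that $B^{II}(V)$ is a submartingale for every $V$ and a martingale when $V=V^{IIn}$, which, combined with the boundary conditions in Lemma~\ref{lem:rr_positive}\eqref{lem_negative:item_ii}, yields part~\eqref{lem:item_Val_Fun_II_n}. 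Part~\eqref{lem:item_Val_Fun_I_n} is completely analogous, with $B^{II}(V)$ replaced by a supermartingale $B^I_t(V):=\zeta_n^{(I)}(R^n_t(V),Z^n_t,T-t)$ and $V^{IIn}$ by $V^{In}$; the extra hypothesis $|\sigma_1|\neq|\sigma_2|$ on $\bbE$ is used exactly where Lemma~\ref{lem:rr_positive} requires it, to ensure the needed regularity of $\zeta_n^{(I)}$.

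The first step is to localise away from the spatial boundary $r=0$, where Lemma~\ref{lem:rr_positive} does not assert smoothness of $\zeta_n^{(II)}$ and where the gradient bound degenerates. For the starting point $r<0$, fix any $\varepsilon\in(0,-r)$ and stop at $\tau_{-\varepsilon}^+:=\inf\{t\geq 0:R^n_t(V)\geq-\varepsilon\}$. On the stochastic interval $[0,T\wedge\tau_{-\varepsilon}^+]$, Lemma~\ref{lem:rr_positive}\eqref{lem_negative:item_i} gives a uniform bound on $|\partial_r\zeta_n^{(II)}|$ and Lemma~\ref{lem:measures_II_I} provides the necessary $\cC^{2,1}$ regularity; since $Z^n$ has jumps only, the components $Z^{n,i}$ satisfy $[R^n(V),Z^{n,i}]\equiv 0$. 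Applying It\^o's formula for general semimartingales to the continuous part and using Lemma~\ref{lem:Q_mart} (applicable because $Q_n$ is bounded by~\eqref{eq:sup_Assum_Q} on the finite effective state space $U_n\cap\bbE$) to treat the jumps of $Z^n$, the decomposition of $B^{II}_{t\wedge\tau_{-\varepsilon}^+}(V)$ has drift
\[
\int_0^{t\wedge\tau_{-\varepsilon}^+}\!\!\left[\left(\cL^{C_s}\zeta_n^{(II)}(\cdot,\cdot,T-s)\right)(R^n_s(V),Z^n_s)-\frac{\partial\zeta_n^{(II)}}{\partial t}(R^n_s(V),Z^n_s,T-s)\right]\dd s,
\]
which is non-negative by the HJB equation~\eqref{eq:HJB_II_Finite}. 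The local martingale part consists of stochastic integrals against $B$, $W$, and the compensated jump measure of $Z^n$; using again the uniform bound on $|\partial_r\zeta_n^{(II)}|$, the boundedness of $\sigma_i$ on $U_n\cap\bbE$, and the boundedness of $\zeta_n^{(II)}\in[0,1]$, these are in fact true martingales. Hence $B^{II}_{\cdot\wedge\tau_{-\varepsilon}^+}(V)$ is a submartingale, yielding
\[
\zeta_n^{(II)}(r,z,T)\leq\EE_{r,z}\!\left[\zeta_n^{(II)}\bigl(R^n_{T\wedge\tau_{-\varepsilon}^+}(V),Z^n_{T\wedge\tau_{-\varepsilon}^+},T-T\wedge\tau_{-\varepsilon}^+\bigr)\right].
\]
When $V=V^{IIn}$, the choice $C_s=\hat c_{II}(Z^n_s)=-\sgn(\sigma_1\sigma_2)(Z^n_s)$ attains the infimum in~\eqref{eq:HJB_II_Finite} (because $\partial^2_r\zeta_n^{(II)}\leq 0$ and this $c$ maximises $\sigma_1^2-2c\sigma_1\sigma_2+\sigma_2^2$), so the drift vanishes identically and the inequality above becomes an equality.

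It remains to pass $\varepsilon\downarrow 0$ and identify the right-hand side with $\PP_{r,z}[\tau_0^+(R^n(V))>T]$. Since $R^n(V)$ has continuous paths, $\tau_{-\varepsilon}^+\uparrow\tau_0^+(R^n(V))$ and $R^n_{\tau_{-\varepsilon}^+}(V)\to 0$ almost surely on $\{\tau_0^+(R^n(V))\leq T\}$. Using the boundary conditions of Lemma~\ref{lem:rr_positive}\eqref{lem_negative:item_ii} (namely $\zeta_n^{(II)}(0,\cdot,\cdot)\equiv 0$ and $\zeta_n^{(II)}(\cdot,\cdot,0)\equiv 1$ on $(-\infty,0)\times\bbE$) together with the bound $0\leq\zeta_n^{(II)}\leq 1$ for dominated convergence, the stopped expectation above converges to $\PP_{r,z}[\tau_0^+(R^n(V))>T]$, establishing part~\eqref{lem:item_Val_Fun_II_n} for $r<0$. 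The case $r=0$ is trivial as both sides equal $0$.

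The main obstacle is the simultaneous localisation in space (avoiding $r=0$, where $\zeta_n^{(II)}$ lacks $\cC^{2,1}$ regularity) and in the chain (to invoke Lemma~\ref{lem:Q_mart}, which requires~\eqref{eq:sup_Assum_Q}): one has to check that the gradient bound from Lemma~\ref{lem:rr_positive}\eqref{lem_negative:item_i} combined with the finiteness of $\sigma_i$ on $U_n\cap\bbE$ indeed upgrades the local martingale part of the It\^o decomposition to a true martingale on $[0,T\wedge\tau_{-\varepsilon}^+]$, and that the $\varepsilon\downarrow 0$ limit can be taken without losing integrability. Once these integrability checks are in place, the verification argument goes through as sketched.
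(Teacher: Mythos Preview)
Your verification argument is correct and follows essentially the same route as the paper's proof: define the Bellman process via the candidate value function, localise at $\tau_{-\varepsilon}^+$, apply It\^o's formula together with Lemma~\ref{lem:Q_mart} and the HJB equation~\eqref{eq:HJB_II_Finite}, use the gradient bound of Lemma~\ref{lem:rr_positive}\eqref{lem_negative:item_i} to upgrade the local martingale terms to true martingales, and then pass $\varepsilon\downarrow 0$ via dominated convergence. The paper additionally builds the stopping time $\tau_0^+$ into the definition of the Bellman process (writing $S^{II}_t=\zeta_n^{(II)}(R^n_{\tau_0^+\wedge t}(V),Z^n_{\tau_0^+\wedge t},T-t)$) and gives a conditional-expectation interpretation via the switched Brownian motions $V^{IInt}$, but neither of these is essential once you localise at $\tau_{-\varepsilon}^+$.

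One small imprecision to flag: you justify the true-martingale property of the stochastic integral part by appealing to ``the boundedness of $\sigma_i$ on $U_n\cap\bbE$''. This is not quite enough, because the stopped chain $Z^n$ does \emph{not} remain in $U_n\cap\bbE$: at time $\tau_n$ it jumps to a state in $\bbE\setminus U_n$ and is absorbed there, so $\sigma_i(Z^n_s)=\sigma_i(Z_{\tau_n})$ for $s\geq\tau_n$, which need not be bounded in $\omega$. The correct justification (as the paper does) is to invoke assumption~\eqref{eq:Mart_Integrab_Quad_VAr} together with the integrability estimate~\eqref{eq:Stopped_VAriable_L_one} established in the proof of Lemma~\ref{lem:measures_II_I}, which controls $\EE_z[I_{\{\tau_n\leq T\}}\Sigma_{II}^2(Z_{\tau_n})]$. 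With this correction, your argument goes through.
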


\begin{proof}
\noindent \eqref{lem:item_Val_Fun_II_n}
Pick any Brownian motion
$V\in\cV$
and, 
for any
$t\in[0,T]$,
define the corresponding 
Brownian motion 
$V^{IInt}=(V_s^{IInt})_{s\geq0}\in\cV$
by
\begin{eqnarray}
\label{eq:Bellman_BM_Coupling}
V_s^{IInt}  :=  \left\{ \begin{array}{ll}
V_s  & \textrm{ if $s\leq t$,}\\
V_t + V^{IIn}_{s}-V^{IIn}_{t} & \textrm{ if $s>t$,}
\end{array} \right.
\end{eqnarray}
where 
$V^{IIn}\in\cV$
is given in~\eqref{eq:Def_VI_VII}
with 
$Z$
substituted by the stopped chain
$Z^n$.
For any
$r\leq0,z\in\bbE$, 
the Bellman process
$S^{II}=(S^{II}_t)_{t\in[0,T]}$
is defined by
\begin{eqnarray}
\label{eq:Technical_Reason}
S^{II}_t  & := &  \PP_{r,z}\left[\tau_0^+(R^n(V^{IInt}))\geq T\vert\cF_t\right],\quad t\in[0,T].
\end{eqnarray}
In this definition we use $\geq$ instead of $>$
for technical reasons (see Remark after this proof).
%which will allow us to establish the
%required inequality in~\eqref{lem:item_Val_Fun_II_n}.
%Note that the stopping time 
%$\tau_0^+$
%on the right-hand side of the equality 
%is given by
%$\tau_{0}^+:=\tau_{0}^+\left(R^n(V)\right)$,
%and hence does depend on the choice of 
%$V$.
%This is not explicitly stated 
%in the formula
%for brevity of notation.
Let
$\tau_{0}^+:=\tau_{0}^+\left(R^n(V)\right)$
and note that for any
$t\in[0,T)$ 
the equality 
$S^{II}_t=S^{II}_{t\wedge \tau_0^+}$
holds.
Hence we have
\begin{eqnarray*}
S^{II}_{t}  & = &  
\PP_{R^n_{\tau_{0}^+\wedge t}(V),Z^n_{\tau_{0}^+\wedge t}}\left[\tau_0^+(R^{IIn})\geq T-s\right]\vert_{s=t\wedge\tau_0^+} = 
\zeta_n^{(II)}\left(R^n_{\tau_{0}^+\wedge t}(V),Z^n_{\tau_{0}^+\wedge t},T-(t\wedge \tau_{0}^+)\right),
\end{eqnarray*}
by the strong Markov property 
and
definitions~\eqref{eq:Val_Fun_II},
\eqref{eq:Bellman_BM_Coupling} and~\eqref{eq:Def_of_R}
of the candidate value function
$\zeta_n^{(II)}$,
the Brownian motion 
$V^{IInt}$
and the process
$R^{IIn}$ respectively
(note that 
$\PP_{r,z}[R^{IIn}_u=0]=0$
for any $r\leq0$,
$z\in\bbE$
and
$u>0$, 
implying 
$\PP_{r,z}\left[\tau_0^+(R^{IIn})= u\right]=0$ and hence the second equality above).

\noindent \textbf{Claim.} The process
$(S^{II}_{t})_{t\in[0,T)}$ 
is a bounded c\`adl\`ag $(\cF_t)$-submartingale on the interval
$[0,T)$.

\noindent The process is c\`adl\`ag on $[0,T)$ 
by Lemma~\ref{lem:measures_II_I}\eqref{lem:item_a_Finite} 
and Assumption~\eqref{eq:Z_Chain_Assumption}. It is bounded by definition. 
To see that 
$(S^{II}_{t})_{t\in[0,T)}$ 
is a submartingale,
define a stopping time
$\tau_{-\varepsilon}^+:=\inf\{t\geq0:R_t^n(V)=-\varepsilon\}$,
for any small
$\varepsilon>0$,
and note that 
$\tau_{-\varepsilon}^+<\tau_{0}^+$.
Hence,
for any
$r\leq0,z\in\bbE,t\in[0,T]$,
we have
\begin{eqnarray}
\label{eq:SII_epsilon}
S^{II}_{t\wedge \tau_{-\varepsilon}^+}  & = &  
\zeta_n^{(II)}\left(R^n_{\tau_{-\varepsilon}^+\wedge t}(V),Z^n_{\tau_{-\varepsilon}^+\wedge t},T-(t\wedge \tau_{-\varepsilon}^+)\right).
\end{eqnarray}
%takes the form
%where the second equality follows from 
%by the Markov property 
%and
%definitions~\eqref{eq:Val_Fun_II} 
%and~\eqref{eq:Bellman_BM_Coupling}
%of the candidate value function
%$\zeta_n^{(II)}$
%and the Brownian motion 
%$V^{IIn}$.
%To establish the Bellman principle 
%applies, it is sufficient to 
%prove that the process 
%$S^{II}$
%is a submartingale for any starting point
%$(r,z)$
%of the process
%$(R^n(V),Z^n)$. 
By Lemma~\ref{lem:measures_II_I}~\eqref{lem:item_a_Finite}, 
It\^o's formula for general semimartingales~\cite[Sec~II.7,~Thm.~33]{Protter:05} 
can be applied to 
$(S^{II}_{t\wedge \tau_{-\varepsilon}^+})_{t\in[0,T)}$
%on the (closed) stochastic interval
%$[0,T\wedge \tau_{-\varepsilon}^+]$
for any fixed small 
$\varepsilon>0$.
%we obtain the following pathwise representation of the process
%$S^{II}$:
In particular, for any 
$t\in[0,T)$,
we obtain
\begin{eqnarray}
\label{eq:S^II_rep}
S^{II}_{t\wedge \tau_{-\varepsilon}^+} & = &  \zeta_n^{(II)}(r,z,T) + 
N_{t\wedge \tau_{-\varepsilon}^+} + 
D_{t\wedge \tau_{-\varepsilon}^+} + 
M_{t\wedge \tau_{-\varepsilon}^+}, %\quad\text{where}
\end{eqnarray}
where 
the processes 
$N, D$
and
$M$
are defined on the stochastic interval
$[0,T\wedge \tau_0^+)$:
%are given by:
\begin{eqnarray*}
N_t & := & \int_0^t \frac{\partial \zeta_n^{(II)}}{\partial r}(R^n_s(V),Z^n_s,T-s)\,\dd R^n_s(V),\quad t\in[0,T\wedge \tau_0^+),\\
% \qquad t<\tau_0^+,\\ %\text{on the stochastic interval $ \tau_0^+]$}\\
\nonumber
D_t & := & \int_0^{t}\left[\left(\cL^{C_s}\zeta_n^{(II)}\right)(R^n_s(V),Z^n_s,T-s) 
- \frac{\partial \zeta_n^{(II)}}{\partial t}(R^n_s(V),Z^n_s,T-s)
\right]\,\dd s,\quad t\in[0,T\wedge \tau_0^+),\\
%\qquad t<\tau_0^+,
\nonumber
M_t & := &
\sum_{0<s\leq t}\left[\zeta_n^{(II)}(R^n_s(V),Z^n_s,T-s)-\zeta_n^{(II)}(R^n_s(V),Z^n_{s-},T-s)\right]\\
\nonumber
& - &
\int_0^{t}(Q_n\zeta_n^{(II)}(R^n_s(V),\cdot,T-s))(Z^n_{s-})\,\dd s,%\qquad t<\tau_0^+,
\quad t\in[0,T\wedge \tau_0^+).
\nonumber
\end{eqnarray*}
Here
$C=(C_t)_{t\geq0}$
is the stochastic correlation process 
from Lemma~\ref{lem:R^V_rep},
which corresponds to the Brownian motion
$V$,
and
$\cL^c\zeta_n^{(II)}$
is defined in~\eqref{eq:Def_cL} 
for any constant
$c\in[-1,1]$
with  
$Q$
substituted by 
$Q_n$
from~\eqref{eq:Stopped_Chain_Gen}.
The representation 
%of 
%$S^{II}$
in~\eqref{eq:S^II_rep}
relies on the fact that
the continuous part of the quadratic covariation
$[R^{n}(V),Z^{n,i}]_t$
vanishes for all times
$t$
and coordinates $Z^{n,i}$
of the chain
$Z^n$.

%\begin{eqnarray}
%\label{eq:Submart_Ineq_II_coupling}
%S^{II}_{t\wedge \tau_{-\varepsilon}^+} & \geq &  \zeta_n^{(II)}(r,z,T) + 
%N_{t\wedge \tau_{-\varepsilon}^+} + 
%M_{t\wedge \tau_{-\varepsilon}^+}. %\quad\text{where}
%\int_0^{t\wedge \tau_{-\varepsilon}^+}\frac{\partial \zeta_n^{(II)}}{\partial r}(R^n_s(V),Z^n_s,T-s)\,\dd R^n_s(V)\\
%\nonumber
%&+& 
%\sum_{0<s\leq t\wedge \tau_{-\varepsilon}^+}\left[\zeta_n^{(II)}(R^n_s(V),Z^n_s,T-s)-\zeta_n^{(II)}(R^n_s(V),Z^n_{s-},T-s)\right]\\
%&  - &
%\int_0^{t\wedge \tau_{-\varepsilon}^+}(Q_n\zeta_n^{(II)}(R^n_s(V),\cdot,T-s))(Z^n_{s-})\,\dd s.
%\nonumber
%\end{eqnarray}

Apply Lemma~\ref{lem:Q_mart},
with 
$F(s,r,z):=\zeta_n^{(II)}(r,z,T-s)$,
$U:=R^n(V)$
and the chain
$Z^n$
(with bounded generator
$Q_n$),
to conclude that 
$(M_{t\wedge \tau_{-\varepsilon}^+})_{t\in[0,T)}$
is a martingale. 
%the process given by the second 
%and third lines of the last display
%is a martingale.
The process
$(N_{t\wedge \tau_{-\varepsilon}^+})_{t\in[0,T)}$
%The first integral on the right-hand side of~\eqref{eq:Submart_Ineq_II_coupling} 
is clearly a local martingale 
(since the integrator
$R^n(V)$
is a martingale) 
%Define a stopping time
%$\tau_{-\varepsilon}^+:=\inf\{t\geq0:R_t^n(V)=-\varepsilon\}$
%for any small
%$\varepsilon>0$
%and note that 
%$\tau_{-\varepsilon}^+<\tau_{0}^+$.
with integrable quadratic variation 
%of the 
%local martingale 
%$$\left(\int_0^{t\wedge \tau_{-\varepsilon}^+}\frac{\partial \zeta_n^{(II)}}{\partial r}(R^n_s(V),Z^n_s,T-s)\,\dd R^n_s(V)\right)_{t\in[0,T]}$$
%is, by
%bounded above by 
%$A_\varepsilon$,
$$
\langle N \rangle_{t\wedge \tau_{-\varepsilon}^+}
%A_\varepsilon:=
=\int_0^{t\wedge \tau_{-\varepsilon}^+}
\left(\frac{\partial \zeta_n^{(II)}}{\partial r}(R^n_s(V),Z^n_s,T-s)\right)^2
(\sigma_1^2(Z^n_s)-2C_s\sigma_1(Z^n_s)\sigma_2(Z^n_s)+\sigma_2^2(Z^n_s))
\,\dd s 
$$
%an integrable random variable
(apply Lemma~\ref{lem:rr_positive}~\eqref{lem_negative:item_i}
and assumption~\eqref{eq:Mart_Integrab_Quad_VAr}).
Therefore this stochastic integral is also a martingale. 
Since 
$C_t\in[-1,1]$
for all 
$t\geq0$, 
equality~\eqref{eq:HJB_II_Finite}
implies that $D_{t\wedge \tau_{-\varepsilon}^+} \geq0$ 
$\PP_{r,z}$-a.s. 
and hence, by~\eqref{eq:S^II_rep},
the process 
$(S^{II}_{t\wedge \tau_{-\varepsilon}^+})_{t\in[0,T)}$
is a submartingale. 

In order to prove that 
$(S^{II}_{t})_{t\in[0,T)}$ 
is a submartingale, we first show that the following limit holds
\begin{equation}
\label{eq:LIm}
\lim_{\varepsilon\to0} 
S^{II}_{t\wedge \tau_{-\varepsilon}^+}
= S^{II}_{t\wedge \tau_{0}^+} \qquad\text{$\PP_{r,z}$-a.s. for any $t\in[0,T)$.}
\end{equation} %non-negative 
%and,
%by taking expectations on both sides of inequality~\eqref{eq:Submart_Ineq_II_coupling},
%we obtain
%$$
%\EE_{r,z}\left[S^{II}_{T\wedge \tau_{-\varepsilon}^+}\right]
%\geq
%\zeta_n^{(II)}(r,z,T)\qquad\text{for all $\varepsilon>0$.}
%$$
%By Lemma~\ref{lem:measures_II_I}\eqref{lem:item_a_Finite}
The paths of 
$R^n(V)$
are continuous and we have
$\tau_{-\varepsilon}^+\uparrow \tau_{0}^+$ $\PP_{r,z}$-a.s.
as $\varepsilon\downarrow0$,
and hence
$\lim_{\varepsilon\downarrow0}R_{t\wedge\tau_{-\varepsilon}^+}^n(V)= R_{t\wedge\tau_{0}^+}^n(V)$.
Since 
$Z$ is a Feller process with c\`adl\`ag paths 
by~\eqref{eq:Z_Chain_Assumption},
the stopped chain
$Z^n$
is also 
(i.e. the semigroup  $P^n$ of $Z^n$ is continuous at $t=0$ and, if
$f$ is a bounded function on 
$\bbE$
that tends to zero at infinity, then so is
$P^n_tf$
for every 
$t\geq0$),
and hence quasi left-continuous.
Therefore, as 
$\varepsilon\downarrow0$,
$\PP_{r,z}$-a.s.
we have
$Z^n_{t\wedge \tau_{-\varepsilon}^+}\to Z^n_{t\wedge \tau_{0}^+}$
(i.e. the chain does not jump at 
$\tau_{0}^+$).
By representation~\eqref{eq:SII_epsilon} and the continuity 
in 
$(r,s)\in (-\infty,0]\times [0,t]$ (recall that $t<T$)
of the function $(r,s)\mapsto \zeta_n^{(II)}(r,z,T-s)$
for each $z\in\bbE$,
implied by Lemma~\ref{lem:measures_II_I}, 
equality in~\eqref{eq:LIm} follows. 

The claim now follows by~\eqref{eq:LIm},
the boundedness and the submartingale property of 
$(S^{II}_{t\wedge \tau_{-\varepsilon}^+})_{t\in[0,T)}$,
the fact 
$S^{II}_t=S^{II}_{t\wedge \tau_0^+}$
$\PP_{r,z}$-a.s.
for any
$t\in[0,T)$
and Fatou's lemma: for any
$0\leq s<t<T$ we have
\begin{equation*}
\EE_{r,z}\left[S^{II}_{t} \vert \cF_s\right]
= 
\EE_{r,z}\left[\limsup_{\varepsilon\downarrow0}S^{II}_{t\wedge \tau_{-\varepsilon}^+} \vert \cF_s\right]
\geq 
\limsup_{\varepsilon\downarrow0}\EE_{r,z}\left[S^{II}_{t\wedge \tau_{-\varepsilon}^+} \vert \cF_s\right]
\geq 
\limsup_{\varepsilon\downarrow0} S^{II}_{s\wedge \tau_{-\varepsilon}^+} =S^{II}_{s}.
\end{equation*}

Doob's submartingale convergence theorem and the Claim above imply that the following limit holds
almost surely and in $L^1$:
$\lim_{t\uparrow T}S^{II}_{t} =: S_T$.
The random variable $S_T$ 
satisfies 
$\PP_{r,z}\left[S_T\in[0,1]\right]=1$
(as does
$S^{II}_{t}$ for all
$t<T$)
and
$S_T I_{\{\tau_0^+(R^n(V))<T\}}=0$
$\PP_{r,z}$-a.s.
($S_T$ is an almost sure limit of a process
which is 
equal to zero for all $t$ close to $T$
on the event 
$\{\tau_0^+(R^n(V))<T\}$).
Note that 
$S_T$
need not be equal to 
$S^{II}_{T}=I_{\{\tau_0^+(R^n(V))\geq T\}}$.  
However, the limit satisfies
$S_T\leq S^{II}_{T}$
$\PP_{r,z}$-a.s., implying the key inequality
\begin{equation}
\label{eq:key_inequality_min}
\zeta_n^{(II)}(r,z,T)\leq 
\EE_{r,z}\left[S_T\right] \leq
\PP_{r,z}\left[\tau_0^+(R^n(V))\geq T\right] 
\quad\text{for any $V\in\cV$ and any $T>0$.}
\end{equation}

In order to prove the equality in part~\eqref{lem:item_Val_Fun_II_n},
we apply the inequality in~\eqref{eq:key_inequality_min}
to the time horizon $T+\delta$,
where
$T$ is as in the statement of the lemma and 
$\delta>0$
is arbitrary:
$$
\zeta_n^{(II)}(r,z,T+\delta)\leq \PP_{r,z}\left[\tau_0^+(R^n(V))\geq T+\delta\right].  
$$
Since the equality
$\cup_{k\in\bbN}\{\tau_0^+(R^n(V))\geq T+1/k\}  = \{\tau_0^+(R^n(V))>T\}$
holds
and
$\zeta_n^{(II)}$ is continuous in time (in fact differentiable,
see Lemma~\ref{lem:measures_II_I}~\eqref{lem:item_a_Finite}) 
away from time zero, for any $V\in\cV$ we get (e.g. by the DCT):
$$\zeta_n^{(II)}(r,z,T) = \lim_{k\to\infty} \zeta_n^{(II)}(r,z,T+1/k) 
\leq
\lim_{k\to\infty} \PP_{r,z}\left[\tau_0^+(R^n(V))\geq T+1/k\right] = 
\PP_{r,z}\left[\tau_0^+(R^n(V))>T\right].$$
This concludes the proof of part~\eqref{lem:item_Val_Fun_II_n}
of the lemma.

\noindent \eqref{lem:item_Val_Fun_I_n}
For any Brownian motion
$V\in\cV$
and 
$t\in[0,T]$,
define 
$V^{Int}=(V_s^{Int})_{s\geq0}\in\cV$
by
\begin{eqnarray*}
%\label{eq:Bellman_BM_Coupling}
V_s^{Int}  :=  \left\{ \begin{array}{ll}
V_s  & \textrm{ if $s\leq t$,}\\
V_t + V^{In}_{s}-V^{In}_{t} & \textrm{ if $s>t$,}
\end{array} \right.
\end{eqnarray*}
where 
$V^{In}\in\cV$
is given in~\eqref{eq:Def_VI_VII}
with 
$Z^n$
in the place of
$Z$. 
In this case 
the Bellman process
$S^{I}=(S^{I}_t)_{t\in[0,T]}$
is given by
%\begin{eqnarray*}
$S^{I}_t  :=  
\PP_{r,z}\left[\tau_0^+(R^n(V^{Int}))>T\vert\cF_t\right] =
\zeta_n^{(I)}\left(R^n_{\tau_0^+\wedge t}(V),Z^n_{\tau_0^+\wedge t},T-(\tau_0^+\wedge t)\right)$,
%\PP_{r,z}\left[\tau_0^+(R^n(V^{Int}))>T\vert\cF_t\right],$
%\end{eqnarray*}
for any
$r\leq0,z\in\bbE,t\in[0,T],$
where again
$\tau_{0}^+:=\tau_{0}^+\left(R^n(V)\right)$
and the second equality holds by 
the Markov property 
and~\eqref{eq:Val_Fun_I}. 
The proof in this case is simpler than in 
part~\eqref{lem:item_Val_Fun_II_n},
as analogous arguments to part~\eqref{lem:item_Val_Fun_II_n}
imply that
$(S^{I}_t)_{t\in[0,T)}$
is a supermartingale with a limit at $T$
that is in this case smaller or equal to 
$S^{I}_T$ (cf. Remark below). Therefore 
the analogous inequality to~\eqref{eq:key_inequality_min}
states
\begin{equation*}
\zeta_n^{(I)}(r,z,T)\geq 
\PP_{r,z}\left[\tau_0^+(R^n(V)) > T\right] 
\quad\text{for any $V\in\cV$,}
\end{equation*}
removing the need for an additional limiting argument 
based on the perturbation of the maturity $T$
(cf. the final paragraph of the proof of part~\eqref{lem:item_Val_Fun_II_n}).
The details are left to the reader.  
%This concludes the proof of the lemma. 
%Analogous arguments to the ones used in the proof of 
%part~\eqref{lem:item_Val_Fun_II_n}
%can now be applied to conclude that the process
%$S^I$,
%appropriately stopped is a supermartingale.
%Then
%the optimality of the Brownian motion 
%$V^{In}\in\cV$
%follows by a limiting argument as in part~\eqref{lem:item_Val_Fun_II_n}.
%This concludes the proof of the lemma.
\end{proof}

\begin{remark}
The reason for defining the Bellman process 
$S^{II}$
in~\eqref{eq:Technical_Reason}
with $\geq$
rather than
$>$,
as is naturally suggested by our setting in 
part~\eqref{lem:item_Val_Fun_II_n}
of Lemma~\ref{lem:Coupling_n_Solution},
is as follows. 
With strict inequality,
$(S^{II}_t)_{t\in[0,T)}$
would still be a bounded convergent submartingale but its limit
$S_T$
would no longer necessarily satisfy 
$S_T\leq S^{II}_{T}=I_{\{\tau_0^+(R^n(V)) > T\}}$
$\PP_{r,z}$-a.s.
The problem arises on the event 
$\{\tau_0^+(R^n(V)) = T	\}$,
which need not have
probability $0$ for a general 
Brownian motion $V\in\cV$.
In 
part~\eqref{lem:item_Val_Fun_I_n}
of Lemma~\ref{lem:Coupling_n_Solution},
the same phenomenon of the atom 
$\{\tau_0^+(R^n(V)) = T	\}$
occurs, but the required inequality 
$S_T\geq S^{I}_{T}=I_{\{\tau_0^+(R^n(V)) > T\}}$
holds everywhere, including the atom at $T$, as the Bellman process
at $T$,
$S^{I}_{T}$,
takes value zero on
$\{\tau_0^+(R^n(V)) = T	\}$.
\end{remark}

\subsection{Proof of Theorem~\ref{thm:coupling_control}}
\label{subsec:Proof_Coupling}
We establish Theorem~\ref{thm:coupling_control}  in two steps.
The first step 
%in the proof of 
%Theorem~\ref{thm:coupling_control}
consists of generalising the result of Lemma~\ref{lem:Coupling_n_Solution}~\eqref{lem:item_Val_Fun_I_n},
\begin{eqnarray}
\label{eq:Final_upper_bound_n}
\zeta_n^{(I)}(r,z,T) = \sup_{V\in\cV} \PP_{r,z}\left[\tau_0^+(R^n(V))>T\right] 
\qquad\text{for any $r\leq0,z\in\bbE$ and $n\in\bbN$,}
\end{eqnarray}
to the case where
the assumption 
$|\sigma_1|(z)\neq|\sigma_2|(z)$
for all 
$z\in\bbE$
is not satisfied.
The function
$\zeta_n^{(I)}$
in this expression 
is given in~\eqref{eq:Val_Fun_I}
and
$R^n(V)$
and
$\tau_0^+(R^n(V))$
are defined in~\eqref{eq:R_n_V_def}
and~\eqref{eq:Def_Entrance} respectively.
The second step in the proof of 
Theorem~\ref{thm:coupling_control}
consists of a limiting argument that generalises 
Lemma~\ref{lem:Coupling_n_Solution} 
to volatility chains with possibly unbounded generator matrices.

Consider the case of general volatility functions
$\sigma_1,\sigma_2:\bbE\to\bbR$,
which are only assumed to satisfy
integrability condition~\eqref{eq:Mart_Integrab_Quad_VAr}. 
%needed to state our
%problems.
Then,
for any 
$\epsilon>0$,
there exists a function 
$\sigma_1^\epsilon:\bbE\to\bbR$
%which conincides with 
%$\sigma_1$
%on the set
%$\{z\in\bbE:|\sigma_1|(z)\neq|\sigma_2|(z)\}$
%and 
that satisfies~\eqref{eq:Mart_Integrab_Quad_VAr},
coincides with 
$\sigma_1$
on the set where the moduli of the original volatility functions
are already distinct,
\begin{eqnarray*}
\{z\in\bbE:\sigma^\epsilon_1(z)=\sigma_1(z)\} & = &  \{z\in\bbE:|\sigma_1|(z)\neq|\sigma_2|(z)\}
\end{eqnarray*}
and has the following properties:
\begin{eqnarray*}
|\sigma_1^\epsilon|(z)\neq|\sigma_2|(z),\qquad
\lvert\sigma_1^\epsilon(z)-\sigma_1(z)\rvert<\epsilon,\qquad
\sgn(\sigma_1^\epsilon(z)\sigma_2(z))= \sgn(\sigma_1(z)\sigma_2(z)) 
\qquad\text{for all $z\in\bbE$.}
\end{eqnarray*}
Note that, in order to define
$\sigma_1^\epsilon$,
we used the fact that
$|\sigma_1|+|\sigma_2|>0$,
which implies that if
$|\sigma_1|(z)=|\sigma_2|(z)$
for some 
$z\in\bbE$,
then 
$|\sigma_1|(z)>0$.

Define the process 
$R^{n,\epsilon}(V)$
by~\eqref{eq:R_n_V_def},
but
with 
$\sigma_1$
replaced by
$\sigma^\epsilon_1$,
and note that for any
$t\geq0$
we have
\begin{equation}
\label{eq:Difference_esp_n}
R^{n,\epsilon}_t(V)-R^n_t(V)=\int_0^t\left[\sigma_1^\epsilon(Z^n_s)-\sigma_1(Z^n_s)\right]\,\dd B_s.
\end{equation}
The chain 
$Z$
has 
c\`adl\`ag paths
in a state space with discrete topology
by assumption~\eqref{eq:Z_Chain_Assumption}
%has no instantaneous states 
and hence 
$Z^n$,
defined in~\eqref{eq:Stopped_Chain},
has only finitely many jumps,
say
$N_T(Z^n)\in\bbN\cup\{0\}$,
during the time interval 
$[0,T]$.
Therefore identity~\eqref{eq:Difference_esp_n}
implies the inequality 
%\begin{eqnarray*}
%R_t^{n,\epsilon}(V)-R^n_t(V) & \leq & \epsilon N_T(Z^n)\left(\sup_{s\in[0,T]}B_s-\inf_{s'\in[0,T]}B_{s'}\right)
%\qquad\text{for all $t\in[0,T]$.}
%\end{eqnarray*}
$\lvert R_t^{n,\epsilon}(V)-R^n_t(V)\rvert  \leq \epsilon (1+N_T(Z^n))(\sup_{s\in[0,T]}B_s-\inf_{s'\in[0,T]}B_{s'})$
for all $t\in[0,T]$.
Since the right-hand side of this inequality does not depend on 
$t\in[0,T]$,
the random variables
$S_T^\epsilon(V):=\sup_{t\in[0,T]}R_t^{n,\epsilon}(V)$
and
$S_T(V):=\sup_{t\in[0,T]}R_{t}^{n}(V)$
satisfy
\begin{eqnarray*}
\big\lvert S_T^\epsilon(V) -  S_T(V)
\big\rvert
& \leq & 
\epsilon (1+N_T(Z^n))\left(\sup_{s\in[0,T]}B_s-\inf_{s'\in[0,T]}B_{s'}\right)
\quad\text{and}\quad
\lim_{\epsilon\to0}S_T^\epsilon(V) = S_T(V)\>\PP_{r,z}\text{-a.s.}
\end{eqnarray*}
This implies 
$I_{\{S_T(V)<0\}}\leq \liminf_{\epsilon\to0}I_{\{S_T^\epsilon(V)<0\}}$.
Fatou's lemma and the fact that
$\{S_T(V)<0\} = \{\tau_0^+(R^n(V))>T \}$
therefore imply
\begin{eqnarray}
\nonumber
\PP_{r,z}\left[\tau_0^+(R^n(V))>T \right] & \leq & 
\liminf_{\epsilon\to0} \PP_{r,z}\left[ S_T^\epsilon(V)<0\right]
 =   
\liminf_{\epsilon\to0} 
\PP_{r,z}\left[\tau_0^+(R^{n,\epsilon}(V))>T \right] \\
& \leq &  \liminf_{\epsilon\to0} \PP_{r,z}\left[\tau_{0}^+\!\left(R^{In,\epsilon}\right)>t\right],  
\label{eq:First_Step_proof}
%\zeta^{(I)}_{n,\epsilon}(r,z,T),
\end{eqnarray}
where the process 
$R^{In,\epsilon}$
%$\zeta^{(I)}_{n,\epsilon}(r,z,T)$
is defined in~\eqref{eq:Def_of_R}
with 
$\sigma_1$
substituted by
$\sigma_1^\epsilon$
and the last inequality follows by
Lemma~\ref{lem:Coupling_n_Solution}~\eqref{lem:item_Val_Fun_I_n}.

Define a strictly increasing process
$A^{I,\epsilon}=(A^{I,\epsilon}_t)_{t\geq0}$ 
and a non-decreasing process
$A^{I}=(A^{I}_t)_{t\geq0}$,
analogous to~\eqref{eq:Time_Change_A},
by
\begin{equation*}
%\label{eq:Time_Change_A}
A^{I,\epsilon}_t := \int_0^t(|\sigma^\epsilon_1|-|\sigma_2|)^2(Z^n_s)\,\dd s,\qquad
A^{I}_t := \int_0^t(|\sigma_1|-|\sigma_2|)^2(Z^n_s)\,\dd s.
\end{equation*}
The properties of 
$\sigma^\epsilon_1$
imply that 
$A^{I,\epsilon}_t\geq A^{I}_t$
$\PP_z$-a.s.
for all 
$t\geq0$.
As in the proof of Lemma~\ref{lem:measures_II_I},
the independence of 
$B$
and
$Z$
(by Lemma~\ref{lem:_Zand_W_indep})
implies that 
the processes
$(R^{In,\epsilon},Z^n)$
and
$(r+B_{A^{I,\epsilon}},Z^n)$
are equal in law,
where 
$B_{A^{I,\epsilon}}$
denotes the Brownian motion 
$B$
time-changed by
the precess
$A^{I,\epsilon}$.
Similarly, we have that the laws of
$(R^{In},Z^n)$
and
$(r+B_{A^{I}},Z^n)$
coincide,
where 
$R^{In}$
is given in~\eqref{eq:Def_of_R}.
These observations imply the almost sure inequality,
%\begin{eqnarray*}
$\inf\{t\geq0\,:\, B_{A^{I,\epsilon}_t}=-r\} 
 \leq 
\inf\{t\geq0\,:\, B_{A^{I}_t}=-r\}$,
%\end{eqnarray*}
and the fact that the random variable on the left-hand side of this inequality 
has the same
law as 
$\tau_0^+(R^{n,\epsilon}(V))$
while the one on the right-hand side is distributed as
$\tau_0^+(R^{n}(V))$.
This therefore implies the inequality
\begin{eqnarray*}
\nonumber
\PP_{r,z}\left[\tau_0^+(R^{In,\epsilon})>T \right] & \leq & 
\PP_{r,z}\left[\tau_0^+(R^{In})>T \right] 
\end{eqnarray*}
which, together with~\eqref{eq:First_Step_proof} 
and the definition of
$\zeta^{(I)}$
in~\eqref{eq:Val_Fun_I},
yields~\eqref{eq:Final_upper_bound_n}
and hence concludes  step one of the proof of Theorem~\ref{thm:coupling_control}.

In the second step of the proof we assume that the volatility process
$Z$
is a general $(\cF_t)$-Markov chain with state space
$\bbE\subset\bbR^d$, defined in Section~\ref{sec:i}.
For any
$n\in\bbN$,
in~\eqref{eq:Stopped_Chain}
we defined a stopping time
$\tau_n$
and a chain 
$Z^n$,
which is
equal to 
$Z$
up to the time
$\tau_n$.
Lemma~\ref{lem:Coupling_n_Solution}~\eqref{lem:item_Val_Fun_II_n},
equality~\eqref{eq:Final_upper_bound_n}
and the definitions 
of the functions
$\zeta_n^{(II)}$
and
$\zeta_n^{(I)}$
in~\eqref{eq:Val_Fun_II}-\eqref{eq:Val_Fun_I}
imply the following inequalities
for any Brownian motion 
$V\in\cV$,
\begin{eqnarray}
\label{eq:Final_Inequalities}
\PP_{r,z}\left[\tau_0^+(R^{IIn})>T \right]  \leq  
\PP_{r,z}\left[\tau_0^+(R^n(V))>T \right]  \leq  
\PP_{r,z}\left[\tau_0^+(R^{In})>T \right], %\quad\text{for any $V\in\cV$,}
\end{eqnarray}
where 
$R^{n}(V)$
is given in~\eqref{eq:R_n_V_def}
and 
$R^{In}, R^{IIn}$
are defined in~\eqref{eq:Def_of_R}.
Furthermore,
for any 
$t$
in the stochastic interval
$[0,\tau_n]$
the following equalities hold:
\begin{equation*}
R^{n}_t(V)=R_t(V),
\qquad
R^{IIn}_t = R_t(V^{II}),
\qquad
R^{In}_t = R_t(V^{I}),
\end{equation*}
where the process
$R(V)$
is defined in~\eqref{eq:R^V_Rep}
and the Brownian motions 
$V^I$
and
$V^{II}$
are given in~\eqref{eq:Def_VI_VII}.
Therefore,
%for any
%$\omega\in\Omega$
we have
that,
on the event
$\{\tau_n>T\}$,
the random variables 
$I_{\{\tau_0^+(R^n(V))>T\}}$
and
$I_{\{\tau_0^+(R(V))>T\}}$
coincide.
The same holds true for the pairs
$I_{\{\tau_0^+(R^{IIn})>T\}}$
and
$I_{\{\tau_0^+(R(V^{II}))>T\}}$,
and
$I_{\{\tau_0^+(R^{In})>T\}}$
and
$I_{\{\tau_0^+(R(V^{I}))>T\}}$.
Since 
$(\tau_n)_{n\in\bbN}$
is a non-decreasing sequence of stopping times,
such that
$\tau_n\nearrow\infty$
$\PP_{z}$-a.s.
as
$n\to\infty$,
we obtain the following almost sure limits: 
\begin{eqnarray*}
\lim_{n\to\infty} I_{\{\tau_0^+(R^{IIn})>T\}}
 = 
I_{\{\tau_0^+(R(V^{II}))>T\}},
& & 
\lim_{n\to\infty} I_{\{\tau_0^+(R^{In})>T\}}
 = 
I_{\{\tau_0^+(R(V^{I}))>T\}}, \\
\lim_{n\to\infty} I_{\{\tau_0^+(R^n(V))>T\}}
 =  
I_{\{\tau_0^+(R(V))>T\}}. & & 
\end{eqnarray*}
These equalities, a final application of the Dominated Convergence Theorem 
and the inequalities in~\eqref{eq:Final_Inequalities}
imply~\eqref{eq:sol_Cd}-\eqref{eq:sol_Cu}.
This concludes the proof. 
\hfill \ensuremath{\Box}

\subsection{Time-varying extremal couplings}
\label{subsec:Non_Markov_Coupling}
It is tempting to try to prove/generalise  the result in 
Theorem~\ref{thm:coupling_control}
via a direct argument
based on the 
Dambis, Dubins-Schwartz (DDS)-Brownian motion~\cite[Thm~V.1.6]{RevuzYor:99},
avoiding the Bellman principle. 
Let
$\Sigma^{(1)}=(\Sigma^{(1)}_t)_{t\geq0}$
and
$\Sigma^{(2)}=(\Sigma^{(2)}_t)_{t\geq0}$
be two progressively measurable processes
on 
$(\Omega, (\cF_t)_{t\geq0}, \cF, \PP)$,
such that
$\int_0^t\EE\left(\Sigma^{(i)}_s\right)^2\dd s<\infty$
for
$i=1,2$
and any
$t\geq0$.
As usual, for any
$V\in\cV$,
define the difference process
$R(V)=(R_t(V))_{t\geq0}$
by 
$
R_t(V) := r + \int_0^t\Sigma^{(1)}_s\,\dd B_s-
\int_0^t\Sigma^{(2)}_s\,\dd V_s$,
$r\leq0,\> t\geq0$.
Let the candidate extremal Brownian motions
$V^{II}=(V^{II}_t)_{t\geq0}$
and
$V^{I}=(V^{I}_t)_{t\geq0}$
be given by
\begin{equation}
\label{eq:GEn_Mir_Coupl_Non_MArkov}
V^{II}_t := -\int_0^t\sgn\left(\Sigma^{(1)}_s\Sigma^{(2)}_s\right)\,\dd B_s
\qquad\text{and}\qquad
V^{I}_t := \int_0^t\sgn\left(\Sigma^{(1)}_s\Sigma^{(2)}_s\right)\,\dd B_s.
\end{equation}

Under these assumptions the process
$R(V)$
is a martingale for each 
$V\in\cV$.
Hence, by~\cite[Thm~V.1.6]{RevuzYor:99},
there exists a 
(DDS)-Brownian motion
$W^V$,
adapted to the filtration 
$(\cF_{E_u(V)})_{u\geq0}$,
where
the processes
$A(V)=(A_t(V))_{t\geq0}$
and
$E(V)=(E_u(V))_{u\geq0}$
are defined by
$$A_t(V):=\int_0^t\left((\Sigma^{(1)}_s)^2-2C_s \Sigma^{(1)}_s\Sigma^{(2)}_s+(\Sigma^{(2)}_s)^2\right)\,\dd s
\qquad \text{and}\qquad
E_u(V):=\inf\{s\,:\, A_s(V)>u\}$$
and 
$C=(C_t)_{t\geq0}$
is the stochastic correlation between the Brownian motions 
$B$
and
$V$
from~\eqref{eq:V_Rep} in Lemma~\ref{lem:R^V_rep},
and the following representation holds
\begin{eqnarray*}
R_t(V) & = & r + W^V_{A_t(V)}\qquad \text{for all $t\geq0$.}
\end{eqnarray*}
It is clear from these definitions that the following inequalities 
hold almost surely for all times
$t\geq0$:
\begin{eqnarray}
\label{eq:Inequalities_DDS_Time_Change}
A^{I}_t := \int_0^t\left(\lvert\Sigma^{(1)}_s\rvert- \lvert\Sigma^{(2)}_s\rvert\right)^2\,\dd s
\leq  A_t(V) \leq 
 \int_0^t\left(\lvert\Sigma^{(1)}_s\rvert+ \lvert\Sigma^{(2)}_s\rvert\right)^2\,\dd s=: A^{II}_t.
\end{eqnarray}
Let
$\tau_0^+(R(V))$,
$\tau_0^+(r + W^V_{A^{II}})$
and
$\tau_0^+(r + W^V_{A^{I}})$
denote the first-passage times over zero of the processes
$R(V)$,
$r + W^V_{A^{II}}$
and
$r + W^V_{A^{I}}$,
respectively,
and note that 
the inequalities in~\eqref{eq:Inequalities_DDS_Time_Change}
imply
\begin{eqnarray}
\label{eq:Path_Wise_Inequalities_stopping_times_Coupling}
\tau_0^+(r + W^V_{A^{II}})
\leq  
\tau_0^+(R(V))
\leq  
\tau_0^+(r + W^V_{A^{I}})
\end{eqnarray}
on the entire probability space
$\Omega$
for every Brownian motion 
$V\in\cV$.

It is tempting to conclude from this that 
the processes
$r + W^V_{A^{II}}$
and
$R(V^{II})$,
where the Brownian motion 
$V^{II}$
is defined in~\eqref{eq:GEn_Mir_Coupl_Non_MArkov}
have the same law (ditto for the pair
$r + W^V_{A^{I}}$
and
$R(V^{I})$),
which would 
together with~\eqref{eq:Path_Wise_Inequalities_stopping_times_Coupling},
yield a generalisation 
or an alternative proof 
of 
Theorem~\ref{thm:coupling_control}.
However, the counterexample in Section~\ref{subsubsec:GEneral_Fails}
demonstrates that the generalised mirror coupling in~\eqref{eq:GEn_Mir_Coupl_Non_MArkov}
can be suboptimal in this setting. 
The counterexamples to
Theorem~\ref{thm:coupling_control},
based on the continuous-time Markov chains in Section~\ref{subsection:NOn_MArkov_Chain}, 
which are adapted non-Markovian processes with respect to the filtration 
$(\cF_t)_{t\geq0}$,
clearly show that this approach cannot be used as an alternative
proof of 
Theorem~\ref{thm:coupling_control}, because it only requires
the volatility processes 
to be $(\cF_t)$-adapted.
We should stress here however, that 
in the case of deterministic integrands
$\Sigma^{(1)}$
and
$\Sigma^{(2)}$,
Proposition~\ref{prop:Time_Determ}
can be established.~\footnote{We would like to thank David Hobson for this 
observation.}

\begin{proposition}
\label{prop:Time_Determ}
Let $\Sigma^{(1)},\Sigma^{(2)}$
be deterministic processes (i.e. measurable functions of time)  that satisfy
the integrability condition above,
$\lvert\Sigma^{(1)}_s\rvert, \lvert\Sigma^{(2)}_s\rvert>0$
for all 
$s\geq0$
and
$A_t^{II}, A_t^{I}\nearrow\infty$
as
$t\nearrow\infty$.
Then for any 
time horizon
$T>0$
and 
Brownian motion 
$V\in\cV$,
the following inequalities hold:
\begin{eqnarray*}
\PP_{r}\left[\tau_0^+(R(V^{II}))>T\right]
\leq  
\PP_{r}\left[\tau_0^+(R(V))>T\right]
\leq  
\PP_{r}\left[\tau_0^+(R(V^{I}))>T\right].
\end{eqnarray*}
\end{proposition}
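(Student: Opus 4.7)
The plan is to push through the DDS argument sketched immediately above, exploiting the fact that, when $\Sigma^{(1)}$ and $\Sigma^{(2)}$ are deterministic, the bounds $A^{II}$ and $A^{I}$ in~\eqref{eq:Inequalities_DDS_Time_Change} are deterministic and that for $V=V^{II}$ (resp.\ $V=V^{I}$) the time-change $A(V)$ actually equals $A^{II}$ (resp.\ $A^{I}$). This closes precisely the gap identified in the paragraph following~\eqref{eq:Path_Wise_Inequalities_stopping_times_Coupling}.

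First I would verify $A(V^{II})\equiv A^{II}$ and $A(V^{I})\equiv A^{I}$ by direct computation. From~\eqref{eq:GEn_Mir_Coupl_Non_MArkov} the correlation of $V^{II}$ with $B$ equals $C_s=-\sgn(\Sigma^{(1)}_s\Sigma^{(2)}_s)$, so $-2C_s\Sigma^{(1)}_s\Sigma^{(2)}_s=2\lvert\Sigma^{(1)}_s\rvert\lvert\Sigma^{(2)}_s\rvert$ and the integrand defining $A(V^{II})$ collapses to $(\lvert\Sigma^{(1)}_s\rvert+\lvert\Sigma^{(2)}_s\rvert)^2$; the case $V^{I}$ is analogous. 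Under the deterministic-integrand assumption, $A(V^{II})$ and $A(V^{I})$ are therefore themselves deterministic functions of time.

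Next I would reduce every probability to a first-passage functional of a standard Brownian motion. Fix $V\in\cV$ and let $W^V$ be the DDS Brownian motion with $R_t(V)=r+W^V_{A_t(V)}$. The assumptions $\lvert\Sigma^{(1)}_s\rvert,\lvert\Sigma^{(2)}_s\rvert>0$ together with $A^I_t, A^{II}_t\nearrow\infty$ and~\eqref{eq:Inequalities_DDS_Time_Change} imply that $t\mapsto A_t(V)$ is continuous, strictly increasing and divergent to $+\infty$, so $\{A_t(V):t\in[0,T]\}=[0,A_T(V)]$. Writing $\tau^{W^V}_{-r}:=\inf\{u\geq0:W^V_u=-r\}$, we obtain
$$\PP_r[\tau_0^+(R(V))>T]=\PP_r\Big[\sup_{u\in[0,A_T(V)]}W^V_u<-r\Big]=\PP_r[\tau^{W^V}_{-r}>A_T(V)].$$
Specialising to $V=V^{II}$ and $V=V^{I}$, where the upper limits become the deterministic constants $A^{II}_T$ and $A^{I}_T$, and invoking Lemma~\ref{lem:measures_II_I} for the first-passage law of the standard Brownian motion, this gives $\PP_r[\tau_0^+(R(V^{II}))>T]=G(r,A^{II}_T)$ and $\PP_r[\tau_0^+(R(V^{I}))>T]=G(r,A^{I}_T)$.

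Finally I would combine these identities with the pathwise envelope. Since $A^{II}_T$ and $A^{I}_T$ are non-random, the bounds $A^{I}_T\leq A_T(V)\leq A^{II}_T$ from~\eqref{eq:Inequalities_DDS_Time_Change} yield the event inclusions $\{\tau^{W^V}_{-r}>A^{II}_T\}\subseteq\{\tau^{W^V}_{-r}>A_T(V)\}\subseteq\{\tau^{W^V}_{-r}>A^{I}_T\}$, and taking probabilities delivers
$$\PP_r[\tau_0^+(R(V^{II}))>T]=G(r,A^{II}_T)\leq\PP_r[\tau_0^+(R(V))>T]\leq G(r,A^{I}_T)=\PP_r[\tau_0^+(R(V^{I}))>T].$$
There is no real obstacle; the only subtle point is that $A^{II}_T$ and $A^{I}_T$ must be non-random so that the event inclusions above are valid without one having to control the joint law of $(W^V,A_T(V))$---which is precisely where the deterministic-integrand hypothesis is essential---and that the first-passage distribution of $W^V$ depends only on $W^V$ being a standard Brownian motion, not on the ambient filtration $(\cF_{E_u(V)})_{u\geq0}$.
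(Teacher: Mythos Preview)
Your argument is correct and in fact somewhat more streamlined than the paper's. Both proofs start from the DDS representation $R_t(V)=r+W^V_{A_t(V)}$ and the deterministic sandwich $A^{I}_T\le A_T(V)\le A^{II}_T$ in~\eqref{eq:Inequalities_DDS_Time_Change}. Where they diverge is in how the extremal probabilities are identified. The paper fixes one $V\in\cV$ and works to show that the \emph{process} $r+W^V_{A^{II}}$ has the same law as $R(V^{II})$; this requires building the auxiliary Brownian motion $W^{IIV}$ via a deterministic-integrand stochastic integral against $W^V$, inverting the time-change, and re-expressing the result as an integral against $B$. You bypass this construction entirely: you apply DDS a second time, directly to $R(V^{II})$, observe that its time-change equals the deterministic $A^{II}$, and read off $\PP_r[\tau_0^+(R(V^{II}))>T]=G(r,A^{II}_T)$; the general case is then sandwiched between $G(r,A^{II}_T)$ and $G(r,A^{I}_T)$ via the pathwise event inclusions, which are legitimate precisely because the endpoints are constants. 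Your route is shorter and avoids the stochastic-calculus construction; the paper's route has the mild advantage of establishing the stronger statement that $r+W^V_{A^{II}}$ and $R(V^{II})$ are equal in law as processes (not just at the level of first-passage probabilities), which is what their Remark~(ii) following the proof comments on. One minor inaccuracy: $A_t(V)$ need not be \emph{strictly} increasing (the lower bound $(|\Sigma^{(1)}_s|-|\Sigma^{(2)}_s|)^2$ can vanish), but continuity and monotonicity already give $\{A_t(V):t\in[0,T]\}=[0,A_T(V)]$, which is all you use.
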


\begin{proof}
The integrability assumption 
$\int_0^t(\Sigma^{(i)}_s)^2\dd s<\infty$,
$i=1,2$,
from the beginning of Section~\ref{subsec:Non_Markov_Coupling}
implies that 
$A^{II}$
is a well-defined, finite, strictly increasing differentiable function. 
Its inverse
$E^{II}$,
which is defined on 
$[0,\infty)$
since the limit
$A^{II}$
tends to infinity with increasing time,
is also strictly increasing and differentiable and 
satisfies the following ODE:
\begin{equation}
\label{eq:Def_inverse_E}
E^{II}_u = 
\int_0^u\left(\lvert\Sigma^{(1)}_{E^{II}_s}\rvert+ \lvert\Sigma^{(2)}_{E^{II}_s}\rvert\right)^{-2}\,\dd s.
\end{equation}

Since the left-hand side of~\eqref{eq:Def_inverse_E} is
finite for all 
$u\geq0$, 
for any
$V\in\cV$
the process
$W^{IIV}=(W^{IIV}_t)_{t\geq0}$,
\begin{equation}
\label{eq:Def_BM_E}
W^{IIV}_t := \int_0^{A_t^{II}}
\left(\lvert\Sigma^{(1)}_{E^{II}_u}\rvert+ \lvert\Sigma^{(2)}_{E^{II}_u}\rvert\right)^{-1}\,\dd W^V_u,
\end{equation}
is well-defiend for all
$t\geq0$,
where 
$W^V$
denotes the (DDS)-Brownian motion introduced above.
The quadratic variation of the
continuous local martingale 
$W^{IIV}$
is by~\eqref{eq:Def_inverse_E}
equal to
$[W^{IIV}]_t=E^{II}_{A_t^{II}} = t$,
making
$W^{IIV}$
a Brownian motion by L\'evy's characterisation theorem.
By~\eqref{eq:Def_BM_E}
we obtain
$
\dd W^{IIV}_{E^{II}_s} = 
\dd W^V_s/
(\lvert\Sigma^{(1)}_{E^{II}_s}\rvert+ \lvert\Sigma^{(2)}_{E^{II}_s}\rvert) $ 
and 
$
W^V_u = 
\int_0^{u}
(\lvert\Sigma^{(1)}_{E^{II}_v}\rvert+ \lvert\Sigma^{(2)}_{E^{II}_v}\rvert)\,\dd W^{IIV}_{E^{II}_v} = 
\int_0^{E^{II}_u}
(\lvert\Sigma^{(1)}_{s}\rvert+ \lvert\Sigma^{(2)}_{s}\rvert)\,\dd W^{IIV}_{s},
$
where the last equality follows by~\cite[Prop~V.1.4]{RevuzYor:99}.
Hence 
we find
$  W^V_{A^{II}_t} = 
\int_0^{t}
(\lvert\Sigma^{(1)}_{s}\rvert+ \lvert\Sigma^{(2)}_{s}\rvert)\,\dd W^{IIV}_{s}$
for all 
$t\geq0$.
Since
$\Sigma^{(1)}$
and
$\Sigma^{(2)}$
are non-zero everywhere by assumption, 
the process 
$W=(W_t)_{t\geq0}$,
given by
$W_t:=\int_0^t\sgn(\Sigma^{(1)}_{s}) \dd W^{IIV}_{s}$,
is a Brownian motion and the equalities 
$\lvert\Sigma^{(1)}_{s}\rvert= \sgn(\Sigma^{(1)}_{s})\Sigma^{(1)}_{s}$
and
$\sgn(\Sigma^{(1)}_{s} \Sigma^{(2)}_{s})=
\sgn(\Sigma^{(1)}_{s}) \sgn(\Sigma^{(2)}_{s})$
hold.
Therefore, the processes 
$R(V^{II})$,
where 
$V^{II}$
is given in~\eqref{eq:GEn_Mir_Coupl_Non_MArkov},
and 
$r + W^V_{A^{II}}$
are equal in law and hence~\eqref{eq:Path_Wise_Inequalities_stopping_times_Coupling}
implies the first inequality in the proposition. The second inequality follows
along the same lines. 
\end{proof}

\begin{remarks}
\begin{enumerate}[(i)]
\item It is important to note that the Brownian motion 
$W^{IIV}$,
introduced in~\eqref{eq:Def_BM_E},
is not an element of the set
$\cV$
as it is in general not adapted to the original filtration 
$(\cF_t)_{t\geq0}$.
In fact, 
$W^{IIV}$
is an 
$(\cF_t)$-Brownian motion only in the case 
$V=V^{II}$.
\item The final step in the proof of Proposition~\ref{prop:Time_Determ}
relies on the fact that the stochastic integrals
$$
\int_0^\cdot\left( \Sigma^{(1)}_s +\sgn\left(\Sigma^{(1)}_s\Sigma^{(2)}_s\right)\Sigma^{(2)}_s\right)\dd B_s,
\quad
\int_0^\cdot\left( \Sigma^{(1)}_s+
\sgn\left(\Sigma^{(1)}_s\Sigma^{(2)}_s\right)\Sigma^{(2)}_s\right)\sgn\left(\Sigma^{(1)}_s\right)\dd W^{IIV}_s,
$$
where 
$B$
is a fixed Brownian motion and 
$W^{IIV}$
is defined in~\eqref{eq:Def_BM_E},
are equal in law, which holds since 
$\Sigma^{(1)}$
and
$\Sigma^{(2)}$
are deterministic. Assume that both processes
$\Sigma^{(1)}, \Sigma^{(2)}$
non-deterministic, but
adapted to 
$(\cF_t)_{t\geq0}$
and independent of the Brownian motion 
$B$.
Then,
it is not clear whether one can define the second stochastic integral,
since 
$W^{IIV}$
is not 
$(\cF_t)$-Brownian motion.
Even if this were possible, the laws of the two integrals would 
in general not coincide since the integrand and the integrator are
independent in the former and dependent in the latter integral.
\end{enumerate}
\end{remarks}

\section{Counterexamples}
\label{sec:CounterEx}

\subsection{The presence of drift}
\label{subsec:Drift}
If either of the processes 
$X$
and
$Y(V)$
in~\eqref{eq:Proc_Def_X}
can have drift, 
the conclusion of 
Theorem~\ref{thm:coupling}
fails as the following example demonstrates.

Let $R(V)$
be the difference of 
$X$
and
$Y(V)$
and assume that it takes the form
$$
R_t(V)=r+\mu t+ B_t-\bar\sigma V_t,
$$
where 
$B$
is the fixed $(\cF_t)$-Brownian motion, 
$V\in\cV$ an arbitrary $(\cF_t)$-Brownian motion, 
$\bar \sigma$
a volatility parameter different from
$1$,
$r$
a strictly negative starting point 
and 
$\mu$
a constant positive drift.
Then the candidate extremal Brownian motions in~\eqref{eq:Def_VI_VII}
are given by
$V^I=B$
and
$V^{II}=-B$
and the following lemma holds.

\begin{lemma}
\label{lem:Drift} For any starting point 
$r<0$,
time horizon 
$T>0$,
volatility 
$\bar \sigma>0$
and positive drift
$\mu>0$,
the inequality 
$\PP_{r}\left[\tau_0^+(R(V^{I}))>T\right]  < 
\PP_{r}\left[\tau_0^+(R(V^{II}))>T\right]$
holds.
\end{lemma}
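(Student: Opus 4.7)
The plan is to specialise $V^I = B$ and $V^{II} = -B$ in the formula $R_t(V) = r + \mu t + B_t - \bar\sigma V_t$ to obtain the two drifted Brownian motions
\[
R_t(V^I) = r + \mu t + (1-\bar\sigma)B_t, \qquad R_t(V^{II}) = r + \mu t + (1+\bar\sigma)B_t.
\]
Both start at $r<0$ and share the common drift $\mu>0$, but have distinct diffusion coefficients $\sigma_I := |1-\bar\sigma|$ and $\sigma_{II} := 1+\bar\sigma$. Since $\bar\sigma>0$ and $\bar\sigma\ne 1$, one has $0<\sigma_I<\sigma_{II}$. The coupling time $\tau_0^+(R(V^\ast))$ is therefore the first passage of a drifted Brownian motion to zero from a strictly negative starting point, for $\ast \in \{I,II\}$.

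Next I would invoke the classical first-passage formula for drifted Brownian motion. Setting $f(\sigma) := \PP_r[\tau_0^+(R(V))>T]$ for the generic drifted process with diffusion $\sigma$, one obtains the closed form
\[
f(\sigma) = \Phi\!\left(\frac{-r-\mu T}{\sigma\sqrt T}\right) - \exp\!\left(-\frac{2\mu r}{\sigma^2}\right)\Phi\!\left(\frac{r-\mu T}{\sigma\sqrt T}\right),
\]
so the lemma reduces to the analytic inequality $f(\sigma_I) < f(\sigma_{II})$. A useful probabilistic reformulation is that $\tau_0^+(R(V^\ast))$ follows an inverse Gaussian law with mean $-r/\mu$ (which does not depend on $\sigma$) and shape parameter $\lambda = r^2/\sigma^2$, so $\tau_0^+(R(V^I))$ and $\tau_0^+(R(V^{II}))$ have a common mean but shapes $\lambda_I = r^2/\sigma_I^2 > r^2/\sigma_{II}^2 = \lambda_{II}$: the synchronous coupling time is more concentrated around $-r/\mu$ than the mirror one.

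The desired inequality $f(\sigma_I)<f(\sigma_{II})$ would then be extracted directly from the closed form above. I would compute $f(\sigma_{II})-f(\sigma_I)$ term by term and estimate the two resulting $\Phi$-contributions, showing that the positive contribution coming from the first $\Phi$-summand (whose argument shrinks in modulus as $\sigma$ grows) outweighs the extra negative contribution coming from the second, exponentially amplified $\Phi$-summand (in which the factor $e^{-2\mu r/\sigma^2}$ decreases in $\sigma$ while $\Phi((r-\mu T)/(\sigma\sqrt T))$ increases).

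The main obstacle is precisely this compensation: the two terms of $f$ depend on $\sigma$ in competing ways, and the cancellations must be handled carefully. The key point is that the sign of the net effect is pinned down by the combined positivity of $\mu$ and $-r$, which biases the inverse Gaussian representation so that the larger shape $\lambda_I$ forces the tail beyond $T$ of $\tau_0^+(R(V^I))$ to be strictly smaller than that of $\tau_0^+(R(V^{II}))$. Making this estimate rigorous through a direct manipulation of the explicit formula, or alternatively through the comparison of inverse Gaussian tails with common mean and distinct shapes, completes the proof and exhibits the reversal of the ordering predicted by Theorem~\ref{thm:coupling} in the driftless setting.
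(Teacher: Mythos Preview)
Your setup matches the paper: you correctly identify $\sigma_I=|1-\bar\sigma|<1+\bar\sigma=\sigma_{II}$ and reduce the lemma to a comparison of the explicit first-passage probabilities $f(\sigma_I)$ and $f(\sigma_{II})$. The gap is that you never carry out the comparison. You yourself flag the ``compensation'' between the two terms of $f$ as the main obstacle, and the proposal stops exactly there: the inverse Gaussian reformulation and the appeal to ``the combined positivity of $\mu$ and $-r$'' remain qualitative, and the sentence ``making this estimate rigorous \dots\ completes the proof'' is precisely where a proof must start. As written this is a plan, not an argument.

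The paper avoids the two-point comparison altogether. It reparametrises by $v:=1/\sigma$, writes the probability as a function $F(v)$, and shows $F'(v)<0$ for every $v>0$ by a direct computation. The simplification that makes this work is the Gaussian identity $e^{-2\mu r v^2}\,n\!\bigl(\tfrac{r-\mu T}{\sqrt T}v\bigr)=n\!\bigl(\tfrac{r+\mu T}{\sqrt T}v\bigr)$, which collapses the two density contributions to $F'(v)$ into a single term; together with the remaining summand $4\mu r v\,e^{-2\mu r v^2}N(\cdot)$, which is negative because $r<0$, the derivative is manifestly negative, and the lemma follows from $1/(1+\bar\sigma)<1/|1-\bar\sigma|$. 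This differentiation step is the missing idea in your proposal --- it replaces a delicate endpoint estimate by a trivial sign check.

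One point you should reconcile before writing up: your closed form and the paper's differ in the sign of the argument of the second normal cdf. With the paper's choice the combined density term in $F'(v)$ is $-2\mu\sqrt T\,n(\cdot)<0$; with your choice it is $\tfrac{-2r}{\sqrt T}\,n(\cdot)>0$, and the one-line derivative argument no longer closes. Check the formula against the cited reference.
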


Lemma~\ref{lem:Drift}
implies that 
Theorem~\ref{thm:coupling}
cannot hold for processes with drift. An intuitive explanation 
for this phenomenon is as follows: in the presence of a large drift
upwards, it is better to reduce the volatility as much as possible
(in this case to the level
$|1-\bar \sigma|$),
instead of increasing it to its maximal value
(equal to $(1+\bar \sigma)$),
since the drift 
makes the processes 
$X$
and
$Y(V)$
couple before time 
$T$.

\begin{proof}
Fix
$r<0$,
$T>0$,
$\bar \sigma>0$,
$\mu>0$
and define the function 
$F:(0,\infty)\to[0,1]$
by
$$F(v):=N\left(-\frac{r+\mu T}{\sqrt{T}}v\right) 
- \te^{-2\mu r v^2 } N\left(-\frac{r-\mu T}{\sqrt{T}}v \right),\qquad v>0,$$
and recall that
$\PP_{r}\left[\tau_0^+(R(V^{I}))>T\right]=F\left(1/|1-\bar \sigma|\right)$,
$\PP_{r}\left[\tau_0^+(R(V^{II}))>T\right]=F\left(1/(1+\bar \sigma)\right)$
(see e.g.~\cite[II.2.1,~Eq.~1.1.4]{BorodinSalminen:02}),
where
$N(\cdot)$
denotes the normal cdf.
To establish the lemma it is sufficient to 
%find 
%$\mu>0$
%such that the corresponding 
show that
$F$
is strictly decreasing on the bounded interval
$[1/(1+\bar \sigma), 1/|1-\bar \sigma|]$.
Since the derivative takes the form
$
F'(v)=-2\mu\sqrt{T} n\left(-\frac{r+\mu T}{\sqrt{T}}v \right)+4\mu r v \te^{-2\mu r v^2 } 
N\left(-\frac{r-\mu T}{\sqrt{T}}v \right)
$
and clearly satisfies 
$F'(v)<0$
for all 
$v>0$,
the lemma follows.\footnote{We thank one of the referees for this simplification 
of our original argument.}
%It is easy to see that
%for  any
%$a<0$
%it holds
%$N(a)>-n(a)a/(1+a^2)$, where
%$n(\cdot)$
%%is the normal pdf.
%After a slightly tedious calculation and an application
%of this inequality, it follows that there exists 
%a real constant 
%$C'$,
%which depends on our bounded interval,
%such that the inequality holds:
%$$
%F'(v) \leq  n\left(\frac{r+\mu T}{\sqrt{T}}v\right) \left(\frac{2r}{\sqrt{T}}+\frac{C'}{\mu}\right)\qquad
%\text{for all}\quad
%v\in
%%[1/(1+\bar \sigma), 1/|1-\bar \sigma|].
%$$
%There clearly exists a large
%$\mu>0$,
%such that the right-hand side of this inequality is strictly negative on 
%the entire interval (recall $r<0$) and the lemma follows.
\end{proof}

\subsection{$(\cF_t)$-adapted non-$(\cF_t)$-Markov processes on a discrete state space}
\label{subsection:NOn_MArkov_Chain}
In this section we construct two continuous-time 
$(\cF_t)$-adapted processes with a countable discrete state space,
neither of which are $(\cF_t)$-Markov, and show that in both cases the strategies 
in Theorems~\ref{thm:tracking} and~\ref{thm:coupling} are suboptimal. 
In the first (resp. second) example, Section~\ref{subsec:Non_Markov_counterexample} 
(resp. Section~\ref{subsubsec:Time_Homogeneous}),
the constructed process is semi-Markov (resp. Markov) 
with respect to its natural filtration.
%Markov chains 
%on our probability space, both of which are 
%$(\cF_t)$-adapted but do not satisfy the Markov property
%with respect to the filtration 
%$(\cF_t)_{t\geq0}$.
This demonstrates that the assumption that the chain 
$Z$
is an $(\cF_t)$-Markov process,
not just a Markov process with respect to its ``natural'' filtration,
is indeed necessary in 
Theorems~\ref{thm:tracking} and~\ref{thm:coupling}. 

\subsubsection{$(\cF_t)$-semi-Markov process}
\label{subsec:Non_Markov_counterexample}
Recall that
$B$
is $(\cF_t)$-Brownian motion, fix
$\epsilon\in(0,1)$
and then let the random times 
$T_n$,
$n\in\bbN\cup\{0\}$,
be given 
by
$T_0:=0$
and
$$
T_{n}:=\inf\{t\geq T_{n-1}\,:\,|B_{t}-B_{T_{n-1}}|=\epsilon\}\qquad
\text{for 
$n\geq1$.}
$$
Define the processes
$N=(N_t)_{t\geq0}$
and
$W=(W_t)_{t\geq0}$
by
$$
N_t:=\max\{n\in\bbN\cup\{0\}\,:\,T_n\leq t\}
\qquad\text{and}\qquad
W_t:=B_{T_{N_t}}.
$$
For every
$t>0$
we have
$\{T_n\leq t\}\in\cF_t$
for all
$n\in\bbN$
and hence the process
$W$
is 
$(\cF_t)$-adapted. 
Furthermore
%, in the filtration generated by 
%the pair $(W,B)$, 
$W$ is a continuous-time %time-inhomogeneous simple symmetric random walk
semi-Markov process (i.e. the pair 
$(W,B)$ is $(\cF_t)$-Markov)
with state space
$\epsilon\bbZ$
and c\`adl\`ag trajectories. 
In particular, 
$W$
has only finitely many jumps on any compact interval.
Let
$$Z:=z_0\cE (W),\qquad\text{for a fixed $z_0>0$,}$$
where 
$\cE$
denotes the Dol\'eans-Dade stochastic exponential~\cite[Sec~II.7,~Thm.~37]{Protter:05}.
Therefore, by definition, we have
\begin{equation*}
%\label{eq:DoleansDade_MC}
Z_t = z_0 + \int_0^tZ_{s-}\,\dd W_s = z_0 + \int_0^{T_{N_t}}Z_{s-}\,\dd W_s= Z_{T_{N_t}}, 
\end{equation*}
where the second equality follows from the facts that 
$T_{N_t}\leq t$,
and that there are no jumps of 
$W$
during the time interval
$(T_{N_t},t]$.
The process
$Z$
has a countable state space,\footnote{An additional 
bijection is needed to define a chain with a state space
that is a discrete subspace of a Euclidean space. \label{footnote_5}}\label{page:footnote_bijection}
which can be expressed as 
%$\bbE:=z_0\left(\cup_{m,n\in\bbN}\left\{z_0(1-\epsilon)^n(1+\epsilon)^m:{m,n\in\bbN}\right\}\right)\subset(0,\infty),$
$\bbE:=\left\{z_0(1-\epsilon)^n(1+\epsilon)^m:m,n\in\bbN\right\}\subset(0,\infty)$
and is a continuous-time semi-Markov process 
(as before, $(Z,B)$ is $(\cF_t)$-Markov).
%(since its holding times are not exponential)
%with respect to the filtration generated
%by
%$(Z,B)$,
%which is clearly contained in 
%and is non-Markovian with respect to 
%$(\cF_t)_{t\geq0}$.
%Furthermore, it is clear from the construction that the pair
%$(B,Z)$
%is $(\cF_t)$-Markov. 

Consider the stochastic integral 
$\int_0^\cdot Z_{s}\dd B_s$
and note that
the equality
$W_{T_n}-W_{T_n-}= B_{T_n}-B_{T_{n-1}}$
holds
for all
$n\in\bbN$.
Hence the stochastic 
integral 
can be expressed as follows:
\begin{eqnarray*}
\int_0^t Z_{s}\dd B_s & = & 
\int_0^{T_{N_t}} Z_{s}\dd B_s  +
\int_{T_{N_t}}^t Z_{s}\dd B_s  
=\sum_{n=1}^{N_t} Z_{T_{n-1}}(B_{T_n}-B_{T_{n-1}}) + Z_{T_{N_t}}(B_t-B_{T_{N_t}})\\
& = & (Z_{T_{N_t}}-z_0) + Z_{T_{N_t}}(B_t-B_{T_{N_t}}) = 
Z_t(1+(B_t-B_{T_{N_t}})) -z_0.
\end{eqnarray*}
Therefore, 
since by definition we have
$|B_t-B_{T_{N_t}}|<\epsilon$
and
$Z_t>0$,
the following inequalities hold:
\begin{equation}
\label{eq:Inequality_For_Z}
-z_0\leq (1-\epsilon)Z_t - z_0
\leq
\int_0^t Z_{s}\dd B_s
%\leq
%(1+\epsilon)Z_t - z_0.
\qquad\text{for all $t\geq0$.}
\end{equation}

As in Section~\ref{subsubsec:GEneral_Fails},
define
$\sigma_i:\bbE\to \bbR$
by
$\sigma_i(z):=-iz$
for any $z\in\bbE$ and $i=1,2$,
and note that  by~\eqref{eq:Def_VI_VII} we have
$V^{I}=B$
and
$V^{II}=-B$.
Hence,
for any starting points
$x,y\in\bbR$,
definition~\eqref{eq:Proc_Def_X}
and
inequality~\eqref{eq:Inequality_For_Z}
yield the following almost sure inequalities:
$$
X_t-Y_t(V^I) = x-y + \int_0^t Z_{s}\dd B_s \geq x-y-z_0,
\quad %\text{and}\qquad
X_t-Y_t(V^{II}) = x-y -3 \int_0^t Z_{s}\dd B_s \leq x-y+3z_0.
$$
For any time horizon
$T>0$,
counterexamples to the Conjecture in Section~\ref{subsection:Solution}
(for both Problems~\textbf{(T)} and~\textbf{(C)})
can now be constructed in the same way as in Section~\ref{subsubsec:GEneral_Fails}.

\subsubsection{Non-$(\cF_t)$-Markov Markov chain}
\label{subsubsec:Time_Homogeneous}
In order to define a process
$Z$,
which is an $(\cF_t)$-adapted, time-homogeneous Markov chain in its own filtration
and has properties analogous to the ones in the previous section,
we sample the path of the Brownian motion
$B$
at a sequence of jump times of a Poisson process $N^\epsilon$. 
The key idea is to use the increments of $B$ over the jump times
of $N^\epsilon$ to construct a certain compound Poisson process (in its own filtration), which
is coupled with $B$ and $(\cF_t)$-adapted. The corresponding 
stochastic exponential will then serve as an example of $Z$ with the required properties.

Fix a small
$\epsilon>0$
and assume that $N^\epsilon$
is an $(\cF_t)$-Poisson process\footnote{$N^\epsilon$ is a L\'evy process started at $0$
with state space $\bbN\cup\{0\}$, such that 
$N^\epsilon_t-N^\epsilon_s$
is independent of $\cF_s$
and Poisson distributed with parameter $(t-s)/\epsilon$
for any times $0\leq s<t$.}
with intensity $1/\epsilon$.
Note that 
$N^\epsilon$ 
is necessarily independent of $B$
by Lemma~\ref{lem:_Zand_W_indep}.
%For any $n\in\bbN\cup\{0\}$,
Define $(\cF_t)$-stopping times
\begin{equation} 
\label{eq:N_hitting_time}
T_n:=\inf\{t\geq0:N^\epsilon_t=n\}\quad\text{for any $n\in\bbN$ and $T_{-1}:=T_0:=0$.} 
\end{equation} 
Recall that 
$T_n-T_{n-1}$, $n\in\bbN$,
are IID exponentially distributed with with mean $\epsilon$
and note that
\begin{equation}
\label{eq:TNt_smaller_t}
N^\epsilon_t=\max\{n\in\bbN\cup\{0\}\,:\,T_n\leq t\},
\quad\text{ implying }\quad
T_{N^\epsilon_t}\leq t< T_{N^\epsilon_t+1}\quad \forall t\geq0.
\end{equation}

Define $h(\epsilon):=\exp(-1/\epsilon^2)$
and the function $g_\epsilon:\bbR\to\bbR$ by the formula
$$
g_\epsilon(x):= h(\epsilon)\lfloor 1+ x/h(\epsilon)\rfloor I_{\{x>0\}} +
 h(\epsilon)\lfloor x/h(\epsilon)\rfloor I_{\{x<0\}}, 
%\left\{ \begin{array}{ll}
%\max\{y\in h(\epsilon)\bbZ%\cup\{h(\epsilon)2^{-n}:n\in\bbN\}
%:y\leq x\}
%& \textrm{ if $x\in(-1+h(\epsilon),1)$,}\\
%0 & \textrm{ if $x\in\bbR\setminus(-1+h(\epsilon),1)$.}
%\end{array} \right.
%\qquad\text{where 
%$h(\epsilon):=\exp(-1/\epsilon^2)$}.
$$
%Note that 
%$g_\epsilon(x)=h(\epsilon)\lfloor x/h(\epsilon)\rfloor$
%for all
%$x\in(-1+h(\epsilon),1)$,
where 
$\lfloor y\rfloor$
denotes the largest
element in 
$\bbZ$
smaller than
$y\in\bbR$.
The function 
$g_\epsilon$
satisfies
%is supported in 
%$[-1,1]$
%and satisfies
\begin{equation}
\label{eq:g_epsilon_bound}
\left\vert g_\epsilon(x)-x\right\vert \leq h(\epsilon)\quad \forall x\in\bbR\qquad \text{ and }\qquad
g_\epsilon(x)=0 \iff x = 0.
\end{equation}
We now discretize the increments of $B$ 
%jumps of $V^\epsilon$
using $g_\epsilon$: 
define the process $W^\epsilon=(W^\epsilon_t)_{t\geq0}$
by
$$
W^\epsilon_t:= \sum_{n=0}^{N^\epsilon_t}g_\epsilon\left(B_{T_n}-B_{T_{n-1}}\right),\quad t\geq0.
$$
The process $W^\epsilon$
is $(\cF_t)$-adapted, i.e.
the r.v.
$
W^\epsilon_t= \sum_{m=0}^\infty I_{\{N^\epsilon_t=m\}}\sum_{n=0}^{m}I_{\{T_n\leq t\}}g_\epsilon\left(B_{t\wedge T_n}-B_{t\wedge T_{n-1}}\right)
$
is $\cF_t$-measurable for every 
$t\geq0$
($N^\epsilon$ is $(\cF_t)$-adapted and recall that for any stopping times $\tau\leq\rho$,
the r.v. $B_{\tau}$ is $\cF_\rho$-measurable).
Furthermore, the state space of $W^\epsilon$ 
is $h(\epsilon)\bbZ$ (recall~\eqref{eq:g_epsilon_bound} and $B_0=0$), its trajectories are piecewise constant
and its jumps are IID with distribution $g_\epsilon(B_{T_1})$.
The jump times of 
$W^\epsilon$ 
are given by the sequence of times
$T_n$, $n\in\bbN$,
for the following reason:
$T_1$
is independent of $B$ and 
exponentially distributed making the r.v. $B_{T_1}$ continuous. 
Hence, by~\eqref{eq:g_epsilon_bound}, $g_\epsilon(B_{T_1})\neq0$ almost surely
implying that 
$W^\epsilon$ 
jumps if and only if 
there is a jump in
$N^\epsilon$.
Hence $W^\epsilon$
is a c\'adl\'ag 
$(\cF_t)$-semimartingale,
equal to the sum of its jumps, 
which is a continuous-time random walk in its own filtration. 

\begin{remark}
It is intuitively clear that 
$W^\epsilon$ 
cannot be $(\cF_t)$-Markov: the part of the Brownian path 
over the time interval 
$[T_{N^\epsilon_t},t]$ (recall~\eqref{eq:TNt_smaller_t})
is not contained in the 
$\sigma$-field
generated by $W^\epsilon$ up to time $t$ (but, of course, is in $\cF_t$) \textit{and} provides additional information 
about e.g. the distribution of the random variable
$W^\epsilon_s$ 
%in the interval $[T_{N_t},T_{N_s}]$
for any $s>t$.
The example in this section and Theorems~\ref{thm:tracking} and~\ref{thm:coupling}
imply that 
$W^\epsilon$ 
is indeed non-$(\cF_t)$-Markov if
$\epsilon$ is small enough. A direct rigorous argument establishing this fact
(for any $\epsilon>0$), based on the intuitive description in this remark,
can also be constructed.
\end{remark}

Let 
$Z^\epsilon:=z_0\cE(W^\epsilon)$
be the stochastic exponential of the 
$(\cF_t)$-semimartingale
$W^\epsilon$
(see~\cite[Sec~II.7,~Thm.~37]{Protter:05} for definition).
$Z^\epsilon$
is a time-homogeneous continuous-time Markov chain with
a countable state space and 
c\`adl\`ag paths 
(footnote~\ref{footnote_5} on page~\pageref{page:footnote_bijection}
also applies here).
%where
%it holds 
%$\cE(W^\epsilon)=\prod_{s\leq t}(1+\Delta W^\epsilon_s)$,
%since 
%$W^\epsilon$
%is a pure jumps process.
%(as defined by the first equality in~\eqref{eq:DoleansDade_MC}
%with 
%$W$
%replaced by
%$W^\epsilon$).
%If we can show that,
%It is clear from this construction that 
If
for some
$T>0$
\begin{equation}
\label{eq:Key_Apprx_MC}
\lim_{\epsilon\to0}\int_0^T\EE\left[\left(Z^\epsilon_t-Z_t\right)^2\right]\dd t=0%\qquad\text{$\PP_{z_0}$-a.s.,}
\end{equation}
holds,
where
$Z$
is defined in~\eqref{eq:Z_dependent_BM_Counter},
then,
since the stochastic exponentials 
$Z$
and
$Z^\epsilon$
are square integrable on compact intervals (see Lemma~\ref{lem:For_Gronwal}),
the Burkholder-Davis-Gundy inequality~\cite[Sec~IV.4,~Thm.~48]{Protter:05}
implies the following almost sure convergence
\begin{equation}
\label{eq:Indicators_converge_no_atom}
\phi\left(X^\epsilon_T-Y^\epsilon_T(V)\right)
\to
\phi\left(X_T-Y_T(V)\right),
\qquad
I_{\{\tau_0(X^\epsilon-Y^\epsilon(V))>T\}}
\to
I_{\{\tau_0(X-Y(V))>T\}},
\end{equation}
as
$\epsilon\to0$,
for any Brownian motion 
$V\in\{B,-B\}\subset \cV$,
cost function 
%$\phi\in\cC^2(\bbR)$
$\phi$
and volatility functions 
$\sigma_1,\sigma_2$
given in Section~\ref{subsubsec:GEneral_Fails}
(the processes 
$X^\epsilon,Y^\epsilon(V)$
are defined in~\eqref{eq:Proc_Def_X}
with 
$Z$
replaced by
$Z^\epsilon$
and the stopping time 
$\tau_0(X^\epsilon-Y^\epsilon(V))$
is equal to 
$\inf\{t\geq0:X^\epsilon_t=Y^\epsilon_t(V)\}$).\footnote{We note that $X-Y(V)$
in Section~\ref{subsubsec:GEneral_Fails},
for $V\in\{B,-B\}$, is a geometric Brownian motion (plus a constant). Hence
the distribution of $X_T-Y_T(V)$ does not have atoms, implying in particular that 
$\PP[\tau_0(X-Y(V))=T]=0$. We thank one of the referees for noting that this is 
necessary for the almost sure convergence of the indicators in~\eqref{eq:Indicators_converge_no_atom}
to follow from the BDG inequality, which implies the a.s. convergence 
$\lim_{\epsilon\to0}\|X^\epsilon-Y^\epsilon(V)\|_\infty=0$
for a subsequence.}
The counterexamples from 
Section~\ref{subsubsec:GEneral_Fails}
%(with a
%bounded
%$\phi\in\cC^2(\bbR)$)
show that the conjecture 
in Section~\ref{subsection:Solution}
fails 
(for both Problems~\textbf{(T)} and~\textbf{(C)})
in the case of the process
$Z^\epsilon$
if 
$\epsilon>0$
is small enough. 
In order to complete our counterexample, we need to prove 
that the limit in~\eqref{eq:Key_Apprx_MC} holds. 
To this end we establish the following lemma.

\begin{lemma}
\label{lem:For_Gronwal}
Fix a time horizon $T>0$.
Let the processes 
$N^\epsilon$ and $Z^\epsilon$
be as defined above and let
$Z$
be given by~\eqref{eq:Z_dependent_BM_Counter}.
\begin{enumerate}[(a)]
\item 
\label{lem:a_Prob}
For any 
$\delta>0$ 
and stopping times %$T_n$, $n\in\bbN$,
in~\eqref{eq:N_hitting_time}
we have:
%\begin{equation*}
$\lim_{\epsilon\downarrow0}\sup_{t\in[0,T]} \PP\left[T_{N^\epsilon_t}<t-\delta\right]=0.$
%\end{equation*}
\item 
\label{lem:b_Prob}
$Z^\epsilon$
and
$Z$
are square integrable on compact intervals and 
there exists a constant 
$C_0>0$
such that the following holds:
%Coming back to the proof of~\eqref{eq:Key_Apprx_MC}, note first that Gronwall's inequality
%and Lemma~\ref{lem:For_Gronwal} yield
$$
\EE\left[\left|Z_t- Z^\epsilon_t\right|^2\right]\leq \alpha(t,\epsilon)+\int_0^t\exp(C_0(t-s))\alpha(s,\epsilon)\,\dd s
\qquad
\text{for all $t\geq0$ and small $\epsilon>0$,}
$$
where $\alpha(t,\epsilon)\in[0,\infty)$ satisfies
$ \lim_{\epsilon\downarrow0}\alpha(t,\epsilon)=0$ %\qquad\text{for any $t\geq0$.} $
for any $t\geq0$.
Furthermore,  
%on any interval
%$[0,T]$,
%$T<\infty$,
the function 
$\alpha(\cdot,\epsilon):[0,T]\to\bbR$
can be chosen to be bounded uniformly in all small
$\epsilon>0$.
\end{enumerate}
\end{lemma}

\begin{proof}
\noindent \eqref{lem:a_Prob} Pick any small $\delta>0$ and fix
$\delta_1,\delta_2>0$ such that 
$\delta/2>\delta_1T+\delta_2$.
By the Chebyshev inequality,
the event
$A_{t,\epsilon}:=\{N^\epsilon_t\geq (1-\delta_1)t/\epsilon\}$ satisfies
\begin{equation*}
1-\PP\left[A_{t,\epsilon}\right] 
\leq 
\PP\left[ N^\epsilon_t<(1-\delta_1)t/\epsilon\right] 
\leq 
\PP\left[\lvert N^\epsilon_t-t/\epsilon\rvert>\delta_1t/\epsilon\right] 
\leq \frac{\epsilon^2 t/\epsilon}{\delta_1^2 t^2}
\leq \frac{\epsilon}{\delta_1 T}
\end{equation*}
(recall that both the mean and the variance of 
$N^\epsilon_t$
are equal to
$\epsilon/t$).
Hence
$\lim_{\epsilon\downarrow0}\inf_{t\in[0,T]} \PP\left[A_{t,\epsilon}\right]=1$.
To establish~\eqref{lem:a_Prob}, it hence suffices to prove 
$\lim_{\epsilon\downarrow0}\sup_{t\in[0,T]} \PP\left[T_{N^\epsilon_t}<t-\delta, A_{t,\epsilon}\right]=0$.
Note that we have
\begin{eqnarray*}
\left\{T_{N^\epsilon_t}<t-\delta, A_{t,\epsilon}\right\}
& \subseteq &
\left\{T_{\lfloor (1-\delta_1)t/\epsilon\rfloor}<t-\delta\right\}
\subseteq 
\left\{T_{\lfloor (1-\delta_1)t/\epsilon\rfloor}<  (1-\delta_1)t-\epsilon-\delta_2 \right\} 
\\
& \subseteq &
\left\{\left\lvert T_{\lfloor (1-\delta_1)t/\epsilon\rfloor}- \epsilon \lfloor(1-\delta_1)t/\epsilon\rfloor\right\rvert >\delta_2\right\}
\end{eqnarray*}
for all $t\in[0,T]$
and any
$\epsilon\in[0,\delta/2)$
(recall that  $\delta_2$ satisfies 
$t-\delta<t(1-\delta_1)-\delta_2-\epsilon$
for all $t\in[0,T]$
and that the mean of 
$T_{\lfloor (1-\delta_1)t/\epsilon\rfloor}$
is 
$\epsilon \lfloor(1-\delta_1)t/\epsilon\rfloor >t(1-\delta_1)-\epsilon$).
Hence all we need to show is the equality
$\lim_{\epsilon\downarrow0}\sup_{t\in[0,T]} 
\PP\left[
\left\lvert T_{\lfloor (1-\delta_1)t/\epsilon\rfloor}- \epsilon \lfloor(1-\delta_1)t/\epsilon\rfloor\right\rvert >\delta_2\right]=0$.
Recall that the variance of 
$T_{\lfloor (1-\delta_1)t/\epsilon\rfloor}$
is $\epsilon^2 \lfloor (1-\delta_1)t/\epsilon\rfloor$
and apply Chebyshev's inequality:
$$
\PP\left[\left\lvert T_{\lfloor (1-\delta_1)t/\epsilon\rfloor}- \epsilon \lfloor(1-\delta_1)t/\epsilon\rfloor\right\rvert >\delta_2\right]\leq
\epsilon^2 \lfloor (1-\delta_1)t/\epsilon\rfloor/\delta_2^2\leq
\epsilon (1-\delta_1)T/\delta_2^2.
$$
This proves part~\eqref{lem:a_Prob}.

\noindent \eqref{lem:b_Prob}
Define the process 
$V^\epsilon=(V^\epsilon_t)_{t\geq0}$ 
by
$$
V^\epsilon_t:=B_{T_{N^\epsilon_t}} = \sum_{n=0}^{N^\epsilon_t}\left(B_{T_n}-B_{T_{n-1}}\right),\quad t\geq0.
$$
Note that, as in the case of 
$W^\epsilon$
defined above,
$V^\epsilon$ is an $(\cF_t)$-adapted
c\'adl\'ag semimartingale with piecewise constant paths. The jump times
of  
$V^\epsilon$ 
coincide with those of Poisson process 
$N^\epsilon$.
Hence the stochastic exponentials 
$\overline Z^\epsilon:=z_0\cE(V^\epsilon)$
and 
$Z^\epsilon=z_0\cE(W^\epsilon)$
are also c\'adl\'ag semimartingales and 
posses the following representations
(see e.g.~\cite[Sec~II.7,~Thm.~37]{Protter:05})
for any $t\geq0$:
$$
Z^\epsilon_t=z_0\prod_{n=0}^{N^\epsilon_t}\left(1+g_\epsilon\left(B_{T_{n}}-B_{T_{n-1}}\right)\right)
\qquad
\text{and}\qquad
\overline Z^\epsilon_t=z_0\prod_{n=0}^{N^\epsilon_t}\left(1+B_{T_{n}}-B_{T_{n-1}}\right).
$$
%where the product is taken to be one if 
%$N^\epsilon_t=0$.

Our first task is to control the difference 
$\EE\left[\lvert Z^\epsilon_t - \overline Z^\epsilon_t\rvert^2\right]$.
For any
$t\in[0,T]$
the equality 
\begin{equation}
\label{eq:Conditional_BM_Poisson}
\EE \left[I_{\{N^\epsilon_t>0\}} \prod_{i=1}^{N^\epsilon_t}f_i(B_{T_i}-B_{T_{i-1}})\Big\vert N^\epsilon_t, T_1,\ldots, T_{N^\epsilon_t}\right]=
\EE \left[I_{\{N^\epsilon_t>0\}} \prod_{i=1}^{N^\epsilon_t}F_i(T_i-T_{i-1})\Big\vert N^\epsilon_t, T_1,\ldots, T_{N^\epsilon_t}\right]
\end{equation}
holds for measurable functions
$f_i:\bbR\to\bbR_+$,
$i\in\bbN$,
such that 
$F_i(s):=\EE[f_i(B_s)]<\infty$
for all
$i\in\bbN$
and
$s\leq T$.
This is because
the processes 
$B$
and 
$N^\epsilon$
are independent and hence, 
conditional on the path of 
$N^\epsilon$ up to time $t$,
the increments of $B$
over the holding time intervals of $N^\epsilon$
are independent normal random variables. 
Let
$K_i:=B_{T_{i}}-B_{T_{i-1}}-g_\epsilon\left(B_{T_{i}}-B_{T_{i-1}}\right)$
and
note that by~\eqref{eq:g_epsilon_bound}
we have 
$|K_i|\leq h(\epsilon)$.
Let 
$\mathcal{P}_{n}$
denote the power set of 
$\{1,\ldots,n\}$
and, for any
$S\in\mathcal{P}_{n}$,
let 
$|S|$ be the cardinality of $S$
and $S^c\in\mathcal{P}_{n}$ 
the complement of $S$.
Using this notation 
and 
the elementary inequality  
$(\sum_{i=1}^Na_i)^2\leq N^2 \sum_{i=1}^Na_i^2$
for any non-negative sequence
$(a_i)_{i=1,\ldots,N}$,
we find:
\begin{eqnarray*}
\EE\left[\lvert Z^\epsilon_t - \overline Z^\epsilon_t\rvert^2\right] /z_0
& \leq & 
\EE\left[\left(\sum_{S\in\mathcal{P}_{N^\epsilon_t}\setminus\emptyset}\prod_{j\in S}|K_j| \prod_{i\in S^c}(1+|B_{T_i}-B_{T_{i-1}}|)\right)^2\right]\\
& \leq & 
\EE\left[2^{2N^\epsilon_t}\sum_{S\in\mathcal{P}_{N^\epsilon_t}\setminus\emptyset}\prod_{j\in S}|K_j|^2 \prod_{i\in S^c}(1+|B_{T_i}-B_{T_{i-1}}|)^2\right]\\
& \leq & 
\EE\left[2^{2N^\epsilon_t}\sum_{S\in\mathcal{P}_{N^\epsilon_t}\setminus\emptyset}
\EE\left[ h(\epsilon)^{2|S|}\prod_{i\in S^c}(1+|B_{T_i}-B_{T_{i-1}}|)^2\Big\vert N^\epsilon_t\right]\right]\\
& \leq & 
\EE\left[2^{2N^\epsilon_t}\sum_{S\in\mathcal{P}_{N^\epsilon_t}\setminus\emptyset}
h(\epsilon)^{2|S|} 2^{N^\epsilon_t-|S|} \EE\left[\prod_{i\in S^c}(1+|B_{T_i}-B_{T_{i-1}}|^2)\Big\vert N^\epsilon_t\right]\right].
\end{eqnarray*}
By the tower property, formula~\eqref{eq:Conditional_BM_Poisson}
(with 
$f_i(x):=1+x^2$,
and hence 
$F_i(s)=1+s$,
for 
$i\in S^c$
and 
$f_i(x):=1$
for 
$i\in \bbN\setminus S^c$)
and the fact that for $i\leq N^\epsilon_t$
we have
$T_i-T_{i-1}\leq t \leq T$,
we find
$$
\EE\left[\prod_{i\in S^c}(1+|B_{T_i}-B_{T_{i-1}}|^2)\Big\vert N^\epsilon_t\right]\leq  (1+T)^{N^\epsilon_t-|S|}.
$$
Recall that 
$h(\epsilon)<1$ and hence we have
\begin{eqnarray*}
\EE\left[\lvert Z^\epsilon_t - \overline Z^\epsilon_t\rvert^2\right] 
& \leq & 
z_0h(\epsilon)^2
\EE\left[2^{2N^\epsilon_t}\sum_{S\in\mathcal{P}_{N^\epsilon_t}\setminus\emptyset}
(2+2T)^{N^\epsilon_t-|S|}
 \right] = 
z_0h(\epsilon)^2
\EE\left[2^{2N^\epsilon_t}\sum_{i=1}^{N^\epsilon_t}
\binom{N^\epsilon_t}{i} (2+2T)^{N^\epsilon_t-i}
\right]\\
& \leq & 
z_0h(\epsilon)^2
\EE\left[(12+8T)^{N^\epsilon_t}\right]\leq 
h(\epsilon)^2
\EE\left[A_0\exp(A_1 N^\epsilon_t)\right]
\end{eqnarray*}
%The inequality in~\eqref{eq:g_epsilon_bound}
%and these representations yield
%\begin{equation}
%$\lvert Z^\epsilon_t - \overline Z^\epsilon_t\rvert
%\leq z_0 2^{N^\epsilon_t} h(\epsilon)\leq A_0\exp(A_1 N^\epsilon_t) h(\epsilon),$
%\end{equation}
for some positive constants 
$A_0, A_1$, independent of 
$\epsilon$
and
$t$.
Since 
$\EE[\exp(uN^\epsilon_t)]=\exp((\te^u-1)t/\epsilon)$,
for any $u>0$, 
and 
$h(\epsilon)=\exp(-1/\epsilon^2)$
we get
\begin{equation}
\label{eq:first_part_bound}
\EE\left[\lvert Z^\epsilon_t - \overline Z^\epsilon_t\rvert^2\right]
\leq A_2\exp\left(A_3t/\epsilon-2/\epsilon^2\right) \quad\text{$\forall t\in[0,T]$ and any $\epsilon>0$},
\end{equation}
where the positive constants
$A_2$
and
$A_3$
are independent of 
$\epsilon$
and
$t$.

In order to control the quantity 
$\EE\left[\lvert Z_t - \overline Z^\epsilon_t\rvert^2\right]$,
we apply the representations
of 
$Z=z_0\cE(B)$
and
$\overline Z^\epsilon=z_0\cE(V^\epsilon)$,
implied by the definition of the stochastic exponential~\cite[Sec~II.7,~Thm.~37]{Protter:05}:
$Z_t =z_0+ \int_0^tZ_s\,\dd B_s$ 
and 
$$
\overline Z^\epsilon_t=z_0+\int_0^t\overline Z_{s-}^\epsilon\,\dd V^\epsilon_s
= z_0+\sum_{n=1}^{N^\epsilon_t} \overline Z^\epsilon_{T_{n-1}}\left(B_{T_{n}}-B_{T_{n-1}}\right)
=z_0+\int_0^{T_{N^\epsilon_t}}\overline Z^\epsilon_{s-}\,\dd B_s, 
$$
where 
$\overline Z^\epsilon_{s-}:=\lim_{t\uparrow s}\overline Z^\epsilon_{t}$
if 
$s>0$
and 
$\overline Z^\epsilon_{0-}:=z_0$.
We find
$
Z_t-\overline Z^\epsilon_t = \int_0^{T_{N^\epsilon_t}}(Z_s-\overline Z^\epsilon_{s-})\,\dd B_s + 
\int_{T_{N^\epsilon_t}}^tZ_s\,\dd B_s,
$
%+ 
%\sum_{n=1}^{N_t} Z^\epsilon_{T_{n-1}}
%\left(B_{T_{n}}-B_{T_{n-1}}- g_\epsilon\left(B_{T_{n}}-B_{T_{n-1}}\right)\right).
implying the inequality 
%The definition of the Poisson process
%$N$,
%the definition of 
%$g_\epsilon$
%with the estimate in~\eqref{eq:g_epsilon_bound},
%the elementary fact
%$(a+b+c)^2\leq 9(a^2+b^2+c^2)$
%for any positive
%$a,b,c$,
%and the triangle inequality
%yield 
\begin{equation}
\label{eq:tirangle_bound_gepslion}
\EE\left|Z_t-\overline Z^\epsilon_t\right|^2 \leq 2\EE \left|\int_0^{T_{N^\epsilon_t}}(Z_s-\overline Z^\epsilon_{s-})\,\dd B_s\right|^2 + 
2\EE \left|\int_{T_{N^\epsilon_t}}^tZ_s\,\dd B_s\right|^2.  
%B_1\exp(B_0 N_t)h(\epsilon)^2, 
\end{equation}
%for some constants
%$A_0,B_0,B_1>0$.
%Since, by~\eqref{eq:iid_with_bounded_rate},
%the random variable 
%$N_t$
%%is Poisson distributed with the rate less or equal to
%$t/\epsilon$ 
%and %by definition 
%$h(\epsilon)=\exp(-1/\epsilon^2)$,
%we have
%\begin{eqnarray}
%\label{eq:Poisson_tends_zero}
%\lim_{\epsilon\downarrow0}\EE\left[\exp(B_0 N_t)h(\epsilon)^2\right]=0\qquad\text{for any 
%$t\geq0$.}
%\end{eqnarray}
By~\eqref{eq:TNt_smaller_t}
we have
$T_{N^\epsilon_t}\leq t$
and hence
$$
\left|\int_0^{T_{N^\epsilon_t}}(Z_s-\overline Z^\epsilon_{s-})\,\dd B_s\right|^2  
\leq 
\sup_{u\in[0,T_{N^\epsilon_t}]}
\left|\int_0^{u}(Z_s-\overline Z^\epsilon_{s-})\,\dd B_s\right|^2
\leq 
\sup_{u\in[0,t]}
\left|\int_0^{u}(Z_s-\overline Z^\epsilon_{s-})\,\dd B_s\right|^2,  
$$
which by the Burkholder-Davis-Gundy 
inequality~\cite[Sec~IV.4,~Thm.~48]{Protter:05}
implies the following bound
\begin{eqnarray}
\label{eq:Quadratic_Bound}
\EE\left[ \left|\int_0^{T_{N^\epsilon_t}}(Z_s-\overline Z^\epsilon_{s-})\,\dd B_s\right|^2  \right] 
& \leq & 
A_4\int_0^t\EE\left[\left|Z_s- \overline Z^\epsilon_s\right|^2\right]\,\dd s 
\end{eqnarray}
for some positive constant 
$A_4$
(here we use the fact that 
$\overline Z^\epsilon_{s-}=\overline Z^\epsilon_{s}$ $\PP$-a.s.
for all $s\geq0$).

In order to control the second term on the right-hand side of~\eqref{eq:tirangle_bound_gepslion},
pick an arbitrary 
$\delta>0$.
Then the following inequalities hold:
\begin{eqnarray*}
%\label{eq:Final_term_in_alpha}
\EE\left[\left|\int_{T_{N^\epsilon_t}}^tZ_s\,\dd B_s\right|^2  \right] & \leq &
\EE\left[\sup_{u\in[T_{N^\epsilon_t},t]} \left|\int_u^{t}Z_s\,\dd B_s\right|^2 I_{\{T_{N^\epsilon_t}\geq t-\delta\}} \right]
+\EE\left[\sup_{u\in[T_{N^\epsilon_t},t]} \left|\int_u^{t}Z_s\,\dd B_s\right|^2 I_{\{T_{N^\epsilon_t}< t-\delta\}} \right]\\
&\leq &
\EE\left[\sup_{u\in[t-\delta,t]} \left|\int_u^{t}Z_s\,\dd B_s\right|^2  \right]
+\EE\left[\sup_{u\in[T_{N^\epsilon_t},t]} \left|\int_u^{t}Z_s\,\dd B_s\right|^2 I_{\{T_{N^\epsilon_t}< t-\delta\}} \right]\\
&\leq &
A_5 \EE\left[\int_{t-\delta}^t Z^2_s\,\dd s  \right] 
+\EE\left[\sup_{u\in[0,T]} \left|\int_u^{T}Z_s\,\dd B_s\right|^4\right]^{1/2} 
\sup_{t\in[0,T]} \PP\left[T_{N^\epsilon_t}<t-\delta\right]^{1/2} \\
&\leq &
2z_0A_5 \te^T \delta + A_6 \sup_{t\in[0,T]} \PP\left[T_{N^\epsilon_t}<t-\delta\right]^{1/2}, 
\end{eqnarray*}
where 
$A_5,A_6$
are positive constants 
independent of
$t$,
$\epsilon$
and
$\delta$
(the third inequality follows by the BDG~\cite[Sec~IV.4,~Thm.~48]{Protter:05}
and the Cauchy-Schwartz inequalities, and the fourth is a consequence of 
the fact $Z_t=z_0\exp(B_t-t/2)$ and the 
BDG inequality~\cite[Sec~IV.4,~Thm.~48]{Protter:05} applied for $p=4$).
Part~\eqref{lem:a_Prob} of the lemma implies that 
%there exists a positive 
%constant 
%$A_7$,
%independent of
%$t$,
%$\epsilon$
%and
%$\delta$,
%such that 
\begin{eqnarray*}
%\label{eq:Final_term_in_alpha}
\EE\left[\left|\int_{T_{N^\epsilon_t}}^tZ_s\,\dd B_s\right|^2  \right] & \leq &
(2z_0A_5 \te^T + A_6)\delta \qquad\text{for all small $\epsilon>0$ and $\forall t\in[0,T]$.} 
\end{eqnarray*}
Since 
$\delta>0$
was arbitrary and the left-hand side does not depend on 
$\delta$, we must have the following limit uniformly in 
$t\in[0,T]$:
%An analogous argument yields
%\begin{eqnarray}
%\label{eq:Uniformly_Bounded_Alpha}
%\left|\int_{T_{N^\epsilon_t}}^tZ_s\,\dd B_s\right|^2 \leq  
%\sup_{u\in[0,t]}
%\left|\int_0^{u}Z_s\,\dd B_s\right|^2  
%\leq 2\sup_{u\in[0,t]} Z^2_u.  
%\end{eqnarray}
%Recall that the holding times of $N^\epsilon$,
%i.e. the sequence 
%$T_n-T_{n-1}$,
%$n\in\bbN$,
%are IID exponentials with variance $\epsilon^2$.
%Hence 
%$\lim_{\epsilon\downarrow0} \sup\{T_n-T_{n-1}:n\in\bbN\}=0$
%a.s. 
%The limit in~\eqref{eq:TNt_smaller_t},
%estimate~\eqref{eq:Uniformly_Bounded_Alpha},
%Doob's $L^2$-martingale inequality
%and the Dominated Convergence Theorem imply 
\begin{eqnarray}
\label{eq:Final_term_in_alpha}
\lim_{\epsilon\downarrow0}\overline\alpha(t,\epsilon)=0,\qquad\text{where}\quad \overline\alpha(t,\epsilon):=2\EE\left[\left|\int_{T_{N^\epsilon_t}}^tZ_s\,\dd B_s\right|^2  \right]. 
\end{eqnarray}
The following inequalities are a consequence of~\eqref{eq:tirangle_bound_gepslion},~\eqref{eq:Quadratic_Bound}
and~\eqref{eq:Final_term_in_alpha}:
$$
\EE\left[\left|Z_t- \overline Z^\epsilon_t\right|^2\right]
\leq  
2A_4 \int_0^t\EE\left[\left|Z_s- \overline Z^\epsilon_s\right|^2\right]\,\dd s +  \overline\alpha(t,\epsilon)\qquad \forall t\in[0,T].
$$
A well known elementary estimate (Gronwall's lemma) implies
$$
\EE\left[\left|Z_t- \overline Z^\epsilon_t\right|^2\right]\leq \overline 
\alpha(t,\epsilon)+\int_0^t\exp(2A_4(t-s))\overline \alpha(s,\epsilon)\,\dd s
\qquad
\text{for all $t\in[0,T]$ and small $\epsilon>0$.}
$$
Define 
%$C_0:=B_2/A_0$
%and
$\alpha(t,\epsilon):= 2\overline\alpha(t,\epsilon)+
2A_2\exp\left(A_3t/\epsilon-2/\epsilon^2\right) 
$
%\quad\text{$\forall t\in[0,T]$ and any $\epsilon>0$},
%\frac{1}{A_0}\EE\left[\left|\int_{T_{N^\epsilon_t}}^tZ_s\,\dd B_s\right|^2  \right] + 
%\frac{B_1}{A_1}\EE\left[\exp(B_0 N^\epsilon_t)h(\epsilon)^2\right].
and note that this inequality and~\eqref{eq:first_part_bound}
%,~\eqref{eq:tirangle_bound_gepslion},~\eqref{eq:Quadratic_Bound}
%and~\eqref{eq:Final_term_in_alpha}:
%The last statement in the lemma follows from the fact that 
%$N^\epsilon_t\leq N^\epsilon_T$
%for any
%$t\in[0,T]$,
%the bound in~\eqref{eq:Uniformly_Bounded_Alpha},
%the right-hand side of which is independent of 
%$\epsilon$,
%and Doob's $L^2$-martingale inequality.
yield
\begin{eqnarray*}
\EE\left[\left|Z_t- Z^\epsilon_t\right|^2\right]
& \leq  &
2\EE\left[\left|Z_t- \overline Z^\epsilon_t\right|^2\right]+
2\EE\left[\left|\overline Z^\epsilon_t- Z^\epsilon_t\right|^2\right]%\\ & \leq & 
\leq 
\alpha(t,\epsilon)+\int_0^t\exp(2A_4(t-s))\alpha(s,\epsilon)\,\dd s,
%\int_0^t\EE\left[\left|Z_s- Z^\epsilon_s\right|^2\right]\,\dd s + \alpha(t,\epsilon),\qquad \text{for all $t\in[0,T]$ and small $\epsilon>0$,}
\end{eqnarray*}
which concludes the proof of the lemma. 
%where $\alpha(t,\epsilon)\in[0,\infty)$ satisfies
%$ \lim_{\epsilon\downarrow0}\alpha(t,\epsilon)=0$ %\qquad\text{for any $t\geq0$.} $
%for any $t\geq0$.
\end{proof}

Going back to the equality in~\eqref{eq:Key_Apprx_MC}
for 
$T\in(0,\infty)$,
by Lemma~\ref{lem:For_Gronwal}~\eqref{lem:b_Prob}
we have 
$$
\int_0^T
\EE\left[\left|Z_t- Z^\epsilon_t\right|^2\right]\,\dd t
\leq \int_0^T\alpha(t,\epsilon)\dd t+\int_0^T\dd t\int_0^t\exp(C_0(t-s))\alpha(s,\epsilon)\,\dd s.
$$
Since
$T$
is fixed and 
$\alpha(\cdot,\epsilon)$
is bounded uniformly in 
$\epsilon$
on
$[0,T]$,
the DCT and
Lemma~\ref{lem:For_Gronwal}
imply that the right-hand side of this inequality tends to 
zero and the counterexample follows.

\subsection{$(\cF_t)$-Feller process $Z$ independent of $B$}
\label{subsection:Independence_Also_Does_NOt_Work}
The final counterexample shows that the ``tracking'' part of the
conjecture in Section~\ref{subsection:Solution}
fails for general Feller processes even if 
$Z$
and
$B$
are independent. 

Assume that
there exist an 
$(\cF_t)$-Brownian motion
$B^\perp\in \cV$,
independent 
of
$B$,
and define the $(\cF_t)$-Feller process
$Z:=z_0+B^\perp$
with state space
$\bbE:=\bbR$
for any starting point
$z_0\in\bbR$.
Let
$\sigma_1(z):=2z$
and
$\sigma_2(z):=z$,
for any
$z\in\bbR$,
and note that 
by~\eqref{eq:Def_VI_VII} we have
$V^I=B$.
We will now show that, for the cost function
$\phi(x):=x^4$,
the first inequality in Problem~\textbf{(T)} fails,
i.e. there exists a Brownian motion 
$V\in\cV$
such that  for any
$T>0$
\begin{equation}
\label{eq:Inequality_Tracking_Counter_Indep}
\EE_{r,z_0}\left[(R_T(V))^4\right]< \EE_{r,z_0}\left[(R_T(V^I))^4\right]
\end{equation}
holds,
where 
$R(V)=X-Y(V)$
(and $X$, 
$Y(V)$
given in~\eqref{eq:Proc_Def_X}
for any
$V\in\cV$)
and
$R_0(V)=r$,
$Z_0=z_0$.

To construct
such a process
$V$,
define the family 
$V^c=(V^c_t)_{t\geq0}$,
$c\in[-1,1]$,
of 
$(\cF_t)$-Brownian motions
by
$$
V^c_t:=\sqrt{1-c^2}B_t+c B^\perp_t,
$$
and note that 
$V^0=B=V^I$.
Therefore the difference process
$R(V^c)$
takes the form
$$
R_t(V^c) = r + \int_0^t\left(2Z_s\dd B_s-Z_s\dd V^c_s\right) 
= r +\left(2-\sqrt{1-c^2}\right)\int_0^tZ_s\,\dd B_s -c \int_0^tZ_s\,\dd B^\perp_s,
$$
and hence we find
$\dd [R(V^c),R(V^c)]_t=(5-4\sqrt{1-c^2})Z_t^2\dd t$
%where
%$k(c):=5-4\sqrt{1-c^2}$,
and 
%the covariation measure takes the form
$\dd [R(V^c),Z]_t=-cZ_t\dd t$.

\begin{lemma}
\label{lem:Final_Counter_Ex}
Define
$\psi^c(r,z,t):=\EE_{r,z}[(R_t(V^c))^4]$
for any
$r,z\in\bbR$
and
$t\geq0$.
Then we have 
\begin{eqnarray*}
\psi^c(r,z,t) & = & r^4 + 6k(c)r^2z^2 t+ 3k(c)(r^2+k(c)z^4-4crz^2)t^2\\
& & 
+ k(c)((7k(c)+8c^2)z^2-4cr)t^3
+ (7k^2(c)/4 +2c^2k(c))t^4,
\end{eqnarray*}
where 
$k(c):=5-4\sqrt{1-c^2}$
for any
$c\in[-1,1]$.
\end{lemma}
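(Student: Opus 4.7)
The plan is to introduce the bivariate moment functions
\begin{equation*}
m_{i,j}(t):=\EE_{r,z}\!\left[R_t(V^c)^i\, Z_t^j\right],\qquad i+j\leq 4,
\end{equation*}
and derive a closed linear system of ODEs for them. Using the quadratic (co)variations recorded just before the statement, namely $\dd[R(V^c)]_t=k(c)Z_t^2\,\dd t$, $\dd[Z]_t=\dd t$ and $\dd[R(V^c),Z]_t=-cZ_t\,\dd t$, together with the fact that both $R(V^c)$ and $Z$ are driftless, an application of It\^o's formula to the monomial $R^iZ^j$ followed by taking expectations yields
\begin{equation*}
\dot m_{i,j}(t)=\tfrac{1}{2}i(i-1)\,k(c)\,m_{i-2,\,j+2}(t)+\tfrac{1}{2}j(j-1)\,m_{i,\,j-2}(t)-cij\,m_{i-1,\,j}(t).
\end{equation*}
The crucial point is that this recursion preserves or decreases the total degree $i+j$ (the first term lowers $i$ by $2$ and raises $j$ by $2$), so once truncated at $i+j\leq 4$ the system is finite and triangular; it can be solved in order of increasing $i+j$ and, at a given level, in order of increasing $i$.

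Next, I would march through the hierarchy with initial data $m_{i,j}(0)=r^iz^j$. The base identities $m_{0,0}\equiv 1$, $m_{1,0}\equiv r$, $m_{0,1}\equiv z$ are immediate (the last two because $R(V^c)$ and $Z$ are martingales). Elementary integration then produces successively $m_{0,2}(t)=z^2+t$ and $m_{2,0}(t)=r^2+k(c)z^2t+\tfrac{1}{2}k(c)t^2$ at degree two; the degree-three moments $m_{1,2}$ and $m_{3,0}$; and at degree four $m_{0,4}$ (which feeds $m_{2,2}$) and then $m_{2,2}$ itself. Each comes out as an explicit polynomial in $t$ whose coefficients depend polynomially on $r$, $z$, $c$ and $k(c)$.

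Finally, the recursion at $(i,j)=(4,0)$ collapses to $\dot m_{4,0}(t)=6k(c)\,m_{2,2}(t)$, since the terms indexed by $m_{i,j-2}$ and $m_{i-1,j}$ vanish when $j=0$. Integrating the cubic-in-$t$ polynomial $m_{2,2}(t)$ once against $6k(c)$ and collecting powers of $t$ produces precisely the quartic stated in the lemma for $\psi^c(r,z,t)=m_{4,0}(t)$. The main obstacle will be the purely algebraic bookkeeping along the dependency chain $m_{4,0}\leftarrow m_{2,2},m_{3,0}\leftarrow m_{0,4},m_{2,0},m_{1,2}\leftarrow m_{0,2},m_{1,0}$, namely keeping track of the $c$- and $k(c)$-contributions without sign errors. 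No analytic subtlety arises: $Z=z+B^\perp$ is Gaussian and $R(V^c)$ is a stochastic integral with a polynomial-Gaussian integrand, so all polynomial moments exist in every order and the local-martingale parts of the It\^o expansions are true martingales by routine $L^2$-estimates, justifying the expectation-taking step throughout.
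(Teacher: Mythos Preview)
Your argument is correct. The moment recursion you derive from It\^o's formula is right, the system is indeed triangular in $(i+j,i)$, and the integrability remarks justify passing to expectations; the minor listing of $m_{3,0}$ in your dependency chain is harmless (you yourself note that $\dot m_{4,0}=6k(c)m_{2,2}$ alone).

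The paper's written proof takes a different, purely verificational route: it simply checks that the displayed polynomial satisfies the Kolmogorov backward PDE
\[
\tfrac12 k(c)z^2\,\partial_r^2\varphi - cz\,\partial_r\partial_z\varphi + \tfrac12\,\partial_z^2\varphi = \partial_t\varphi
\]
with boundary data $\varphi(r,z,0)=r^4$ and polynomial growth, and then invokes Feynman--Kac. The paper does allude in one sentence to ``martingale arguments and stochastic calculus'' as an alternative derivation, which is precisely what you carry out in detail. Your approach has the advantage of being constructive (it would produce the formula from scratch), while the paper's verification is shorter once the answer is in hand.
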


\begin{proof}
The representation in the lemma for the expectation 
$\psi^c(r,z,t)$
follows from martingale arguments and stochastic
calculus. Alternatively to verify the lemma, one can easily
check that the function 
$\varphi$,
given by the formula above,
satisfies the PDE
$$
\frac{1}{2}k(c)z^2\frac{\partial^2 \varphi}{\partial r^2}
-cz\frac{\partial^2 \varphi}{\partial r\partial z}+
\frac{1}{2}\frac{\partial^2 \varphi}{\partial z^2} =
\frac{\partial \varphi}{\partial t}, 
$$
with boundary condition 
$\varphi(r,z,0)=r^4$
and polynomial growth in 
$r$ and $z$.
An application of the Feynman-Kac formula then yields
$\psi^c=\varphi$.
\end{proof}

Note that 
$k'(0)=0$
and hence 
the derivative in
$c$
at
$c=0$
of the value function 
$\psi^c(r,z_0,T)$
equals
$$
\frac{\partial \psi^c}{\partial c}(r,z_0,T)\Big\lvert_{c=0} = -r(12z_0^2T+4T^3).
$$
Since this quantity is non-zero for 
any
$r\neq0$,
inequality~\eqref{eq:Inequality_Tracking_Counter_Indep}
is satisfied
(by Lemma~\ref{lem:Final_Counter_Ex})
for some 
$V=V^c$
with $c\neq0$
(recall that 
$V^0=B=V^I$).
An analogous argument can be used to show that the second 
inequality in Problem~\textbf{(T)}
also fails in this setting.

\appendix
%========================================================================================
\section{Proofs of Lemmas~\ref{lem:R^V_rep} and~\ref{lem:Q_mart}}
\label{sec:App}
\subsection{Proof of Lemma~\ref{lem:R^V_rep}}
\label{sec:App_BM_representation_sketch}
It is clear that Lemma~\ref{lem:R^V_rep}
follows from~\eqref{eq:Proc_Def_X}
and the basic properties of stochastic integrals if,
for any 
$V\in\cV$,
we can find a progressively measurable process
$C$
and
$W \in\cV$,
such that 
$-1\leq C_t\leq 1$
for all
$t\geq0$
$\PP$-a.s.,
$W$
and
$B$
independent and 
\begin{eqnarray}
\label{eq:rep_Final_W}
V_t=\int_0^t C_s\,\dd B_s + \int_0^t (1-C_s^2)^{1/2}\,\dd W_s.
\end{eqnarray}

By the Kunita-Watanabe inequality~\cite[Sec~II.6,~Thm.~25]{Protter:05},
the signed random measure 
$\dd [V,B]_t$
on the predictable 
$\sigma$-field
is absolutely continuous with respect to 
Lebesgue measure
$\dd [B,B]_t=\dd t$.
Hence, there exists a predictable process 
$C=(C_t)_{t\geq0}$,
such that 
$\dd [V,B]_t=C_t \dd t$,
and for any
$s<t$
we have
$|[V,B]_t-[V,B]_s|\leq t-s$.
Therefore, we may assume that 
$|C_t|\leq1$
and 
define the processes
$D_t:=(1-C_t^2)^{1/2}$
and
$M_t:=V_t-\int_0^tC_s\dd B_s$.
Note  that  the equalities
$[M,B]_t=0$,
$[M,M]_t=\int_0^t D_s^2\dd s$
and
$\int_0^tI_{\{D_s>0\}}D_s^{-2}\dd [M,M]_s\leq t$
hold.
Therefore the continuous local martingale 
$W$,
given by
$$
W_t : = 
\int_0^tI_{\{D_s>0\}}D_s^{-1}\dd M_s  
+ 
\int_0^tI_{\{D_s=0\}}\dd B^\perp_s ,
$$
is well-defined, where 
$B^\perp\in\cV$
is a Brownian motion independent of 
$B$.
L\'evy's characterisation theorem 
applied to 
$W$
now yields the representation in~\eqref{eq:rep_Final_W}
and hence implies 
Lemma~\ref{lem:R^V_rep}.
\hfill \ensuremath{\Box}

\subsection{Proof of Lemma~\ref{lem:Q_mart}}
\label{sec:App_Lemma_Q_mart}
The assumptions on 
$Q$
and
$F$
imply that 
$\EE[|M^U_t|]<\infty$
for all 
times
$t\geq0$.
The additive structure of the process
$M^U$
implies that it is sufficient to prove the following
almost sure equality:
\begin{eqnarray}
\label{eq:Mart_Req_U}
\EE_z\left[\sum_{t<s\leq t'}\left[F(s,U_s,Z_s)-F(s,U_s,Z_{s-})\right]\Bigg\vert\cF_t\right] & = &
\EE_z\left[\int_t^{t'}(QF(s,U_s,\cdot))(Z_{s-})\,\dd s\bigg\vert\cF_t\right],
\end{eqnarray}
for any
$0<t<t'$
and
$z\in\bbE$.
The jump-chain holding-time description of the continuous-time chain 
$Z$,
the continuity of the process
$U$
and the continuity and boundedness of the function
$F$
imply
\begin{eqnarray}
\label{eq:local_mart_prop}
\EE_z\left[\sum_{u<s\leq u+\Delta u}\left[F(s,U_s,Z_s)-F(s,U_s,Z_{s-})\right]\Bigg\vert\cF_u\right] & = &
\Delta u (QF(u,U_u,\cdot))(Z_{u})+o(\Delta u),
\end{eqnarray}
for any 
$u>0$
and small
$\Delta u>0$.
In this expression, 
for each 
$\Delta u$,
$o(\Delta u)$
represents an 
$\cF_u$-measurable random variable 
which is bounded in modulus 
by 
$C\Delta u$,
for some constant 
$C>0$
independent of 
$\Delta u$
(here we use assumption~\eqref{eq:sup_Assum_Q} and the 
boundedness of
$F$),
and 
$\lim_{\Delta u\downarrow0}\frac{o(\Delta u)}{\Delta u}=0$
almost surely.

We now decompose the left-hand side of~\eqref{eq:Mart_Req_U}
into a sum over the time intervals of length
$\Delta t>0$,
where
$\frac{t'-t}{\Delta t}\in\bbN$,
and apply~\eqref{eq:local_mart_prop} to each summand:
\begin{eqnarray}
\nonumber
\EE_z\left[\sum_{t<s\leq t'}\left[F(s,U_s,Z_s)-F(s,U_s,Z_{s-})\right]\Bigg\vert\cF_t\right]  = 
  \sum_{i=0}^{\frac{t'-t}{\Delta t}-1}\EE_z\left[\sum_{i<\frac{s-t}{\Delta t}\leq i+1}\left[F(s,U_s,Z_s)-F(s,U_s,Z_{s-})\right]\Bigg\vert\cF_t\right] 
\end{eqnarray}
\vspace{-4mm}
\begin{eqnarray}
\label{eq:Interm_Step}
& =  & \frac{o(\Delta t)}{\Delta t}  +  \Delta t \sum_{i=0}^{\frac{t'-t}{\Delta t}-1}\EE_z\left[(QF(t+i\Delta t,U_{t+i\Delta t},\cdot))(Z_{{t+i\Delta t}})\big\vert\cF_t\right]. 
\end{eqnarray}
The properties of the random variables 
$o(\Delta t)$
listed in the paragraph above, the Dominated Convergence Theorem applied to the right-hand side of~\eqref{eq:Interm_Step}
as
$\Delta t\downarrow0$,
the definition of the Lebesgue integral and the fact that 
$Z$
jumps only finitely many times during the time interval
$[t,t']$
together imply the equality in~\eqref{eq:Mart_Req_U}.
This concludes the proof of the lemma.
\hfill \ensuremath{\Box}

\section{Two classes of examples of Markov Chains}
\label{app:Examples}

We first construct a chain $Z$ that does not satisfy~\eqref{eq:Coupling_Int_Assump}
but satisfies~\eqref{eq:Int_Assump}.\footnote{We thank the referee for pointing 
out a potential issue with the relation between the assumptions in~\eqref{eq:Int_Assump}
and~\eqref{eq:Coupling_Int_Assump}.}
Let
$\bbE=\{1,2,\ldots\}$
and define the $Q$-matrix by 
$Q(1,1)=-1/2$,
$$
Q(1,n)=1/2^n, \quad 
Q(n,1)=-Q(n,n)=\beta_n>0
\quad 
\text{for}\quad
n\geq2,\quad\text{such that}\quad
\sum_{n=2}^\infty 1/\beta_n<\infty,
$$
and zero everywhere else. The idea is that $Z$ 
makes very big jumps with small intensity and then
very quickly jumps back to $1$ since $\beta_n$ are large. 
The process is stationary with invariant distribution
$\pi$
given by the detailed balance equations
$\pi_n Q(n,1)=\pi_1 Q(1,n)$, 
i.e. 
$\pi_n>0$
for all 
$n\in\bbE$
and
$\pi_n=\pi_1/(2^n\beta_n)$.
For the function 
$f:\bbE\to\bbR$,
given by
$f(n)=2^n$,
and any 
$m\in\bbE$
we have 
$$\pi_m \EE_m[f(Z_1)]\leq \sum_{n=1}^\infty \pi_n \EE_n[f(Z_1)]=\EE_\pi[f(Z_1)] =\sum_{n=1}^\infty f(n)\pi_n=
\pi_1\sum_{n=1}^\infty 1/\beta_n<\infty,$$
and hence~\eqref{eq:Int_Assump} holds for $T=1$
and $\sigma_i$, $i=1,2$,
such that 
$|\sigma_1|^2+|\sigma_2|^2=f$.
Note however that 
$$
Qf(1)=\sum_{n=1}^\infty Q(1,n)f(n)=\infty
$$
and~\eqref{eq:Coupling_Int_Assump} 
fails.

To construct a chain $Z$ 
such that~\eqref{eq:Coupling_Int_Assump} 
holds and~\eqref{eq:Int_Assump}
fails,
pick a 
$Q$-matrix 
$Q$
on an infinite state space 
with the properties that 
$Z$
is irreducible and
only finitely many elements in each row 
of $Q$
are non-zero (e.g. a birth-catastrophe process with
$\bbE=\{1,2,\ldots\}$
and 
$Q(n,n+1)=Q(n,1)=1$,
$Q(n,n)=-2$ for 
$n\geq2$
and $Q(1,2)=-Q(1,1)=1$).
Then the function 
$f:\bbE\to\bbR$,
given by
$f(n)=1/P_1(1,n)$,
is finite at every 
$n\in\bbE$.
It hence satisfies~\eqref{eq:Coupling_Int_Assump} 
but clearly has the property
$P_1f(1)=\infty$,
which violates~\eqref{eq:Int_Assump}.

%========================================================================================
%\nocite{*}
%\bibliographystyle{chicago}
%\bibliographystyle{amsplain}
%\bibliographystyle{plain}
\bibliographystyle{abbrv}
\bibliography{refs}

\begin{thebibliography}{10}

\bibitem{BarlowJacka:86}
M.~T. Barlow and S.~D. Jacka.
\newblock Tracking a diffusion, and an application to weak convergence.
\newblock {\em Advances in Applied Probability}, 18:15--25, Dec. 1986.

\bibitem{BorodinSalminen:02}
A.~N. Borodin and P.~Salminen.
\newblock {\em Handbood of Brownian Motion - Facts and Formulae}.
\newblock Birkh\"auser Verlag, Basel--Boston--Berlin, 2 edition, 2002.

\bibitem{Gatheral}
J.~Gatheral.
\newblock {\em The volatility surface: a practitoner's guide}.
\newblock John Wiley \& Sons, Inc., 2006.

\bibitem{Hamilton:89}
J.~D. Hamilton.
\newblock A new approach to the economic analysis of nonstationary time series
  and the business cycle.
\newblock {\em Econometrica}, 57:357--384, 1989.

\bibitem{HsuSturm:03}
E.~Hsu and K.-T. Sturm.
\newblock Maximal coupling of {E}uclidean {B}rownian motions.
\newblock {\em preprint, University of Bonn}, 2003.

\bibitem{LaiXing:08}
T.~L. Lai and H.~Xing.
\newblock {\em Statistical Models and Methods for Financial Marketsi}.
\newblock Springer Texts in Statistics. Springer-Verlag, Berlin, 2008.

\bibitem{Lindvall:02}
T.~Lindvall.
\newblock {\em Lectures on the Coupling Method}.
\newblock Dover, New York, 2002.

\bibitem{McNamara:83}
J.~M. McNamara.
\newblock Optimal control of the diffusion coefficient of a simple diffusion
  process.
\newblock {\em Math. Oper. Res.}, 8(3):373--380, Aug. 1983.

\bibitem{Protter:05}
P.~E. Protter.
\newblock {\em Stochastic {I}ntegration and {D}ifferential {E}quations},
  volume~21 of {\em Stochastic Modelling and Applied Probability}.
\newblock Springer-Verlag, Berlin, 2005.
\newblock Second edition. Version 2.1, Corrected third printing.

\bibitem{RevuzYor:99}
D.~Revuz and M.~Yor.
\newblock {\em Continuous {M}artingales and {B}rownian {M}otion}, volume 293 of
  {\em Grundlehren der Mathematischen Wissenschaften}.
\newblock Springer-Verlag, Berlin, third edition, 1999.

\end{thebibliography}
\end{document}